\newtheorem{thm}{Theorem}[section]
\newtheorem{lemma}[thm]{Lemma}
\newtheorem{prop}[thm]{Proposition}
\theoremstyle{remark}
\newtheorem{rmk}[thm]{Remark}
\newcommand{\dom}{\mathsf{D}}
\newcommand{\cB}{\mathscr{B}}
\newcommand{\E}{\mathop{{}\mathbb{E}}}
\newcommand{\cE}{\mathscr{E}}
\newcommand{\cF}{\mathscr{F}}
\newcommand{\cL}{\mathscr{L}}
\newcommand{\cM}{\mathscr{M}}
\renewcommand{\P}{\mathbb{P}}
\newcommand{\erre}{\mathbb{R}}
\newcommand{\enne}{\mathbb{N}}
\newcommand{\eps}{\varepsilon}
\newcommand{\embed}{\hookrightarrow}
\newcommand{\lip}{\dot{C}^{0,1}}
\DeclarePairedDelimiter\abs{\lvert}{\rvert}
\DeclarePairedDelimiter\norm{\lVert}{\rVert}
\DeclarePairedDelimiterX\ip[2]{\langle}{\rangle}{#1,#2}
\numberwithin{equation}{section}
\newif\ifbozza
\title{Ergodicity and Kolmogorov equations for dissipative SPDEs with
  singular drift: a variational approach}
  \author{Carlo Marinelli\thanks{Department of Mathematics, University
      College London, Gower Street, London WC1E 6BT, United
      Kingdom. URL: \texttt{http://goo.gl/4GKJP}} 
  \and Luca Scarpa\thanks{Department of Mathematics, University
      College London, Gower Street, London WC1E 6BT, United
      Kingdom. E-mail: \texttt{luca.scarpa.15@ucl.ac.uk}}}
\date{October 16, 2017}
\begin{document}
\maketitle

\begin{abstract}
  We prove existence of invariant measures for the Markovian semigroup
  generated by the solution to a parabolic semilinear stochastic PDE
  whose nonlinear drift term satisfies only a kind of symmetry
  condition on its behavior at infinity, but no restriction on its
  growth rate is imposed. Thanks to strong integrability properties of
  invariant measures $\mu$, solvability of the associated Kolmogorov
  equation in $L^1(\mu)$ is then established, and the infinitesimal
  generator of the transition semigroup is identified as the closure
  of the Kolmogorov operator. A key role is played by a generalized
  variational setting.
  \medskip\par\noindent
  \emph{AMS Subject Classification:} 60H15, 47D07, 47H06, 37A25.
  \medskip\par\noindent
  \emph{Key words and phrases:} Stochastic PDEs; invariant measures;
  ergodicity; monotone operators.
\end{abstract}


\section{Introduction}
\label{sec:intro}
Our goal is to study the asymptotic behavior of solutions to
semilinear stochastic partial differential equations on a smooth
bounded domain $D\subseteq\erre^n$ of the form
\begin{equation}
\label{eq:0}
  dX_t + AX_t\,dt + \beta(X_t)\,dt \ni B(X_t)\,dW_t, \qquad X(0)=X_0.
\end{equation}
Here $A:V \to V'$ is a linear maximal monotone operator from a Hilbert
space $V$ to its dual $V'$, and $V \subset H:=L^2(D) \subset V'$ is a
so-called Gelfand triple; $\beta$ is a maximal monotone graph
everywhere defined on $\erre$; $W$ is a cylindrical Wiener process on
a separable Hilbert space $U$, and $B$ takes values in the space of
Hilbert-Schmidt operators from $U$ to $L^2(D)$. Precise assumptions on
the data of the problem are given in {\S}\ref{sec:ass} below. The most
salient point is that $\beta$ is \emph{not} assumed to satisfy any
growth assumption, but just a kind of symmetry on its rate of growth
at plus and minus infinity -- see assumption (vi) in \S\ref{sec:ass}
below. Well-posedness of equation \eqref{eq:0} in the strong
(variational) sense has recently been obtained in \cite{cm:luca} by a
combination of classical results by Pardoux and Krylov-Rozovski{\u\i}
(see~\cite{KR-spde,Pard}) with pathwise estimates and weak compactness
arguments.
The minimal assumptions on the drift term $\beta$ imply that, in
general, the operator $A+\beta$ does not satisfy the coercivity and
boundedness assumptions required by the variational approach of
\cite{KR-spde,Pard}. For this reason, questions such as ergodicity and
existence of invariant measures for \eqref{eq:0} cannot be addressed
using the results by Barbu and Da Prato in \cite{BDP-erg}, which
appear to be the only ones available for equations in the variational
setting (cf. also \cite{cm:DPDE10}).
On the other hand, there is a very vast literature on these problems
for equations cast in the mild setting, references to which can be
found, for instance, in~\cite{DP-K,DZ96,Stannat:rev}. Even in this
case, however, we are not aware of results on equations with a drift
term as general as in \eqref{eq:0}. Our results thus considerably
extend, or at least complement, those on reaction-diffusion equations
in \cite{cerrai-libro,DP-K,DZ96}, for instance, where polynomial
growth assumptions are essential. More recent existence and
integrability results for invariant measures of semilinear equations
have been obtained, e.g., in~\cite{EsS:locL,EsS:impr}, but still under
local Lipschitz-continuity or other suitable growth assumptions on the
drift.
Another possible advantage of our results is that we use only standard
monotonicity assumptions, whereas in a large part of the cited
literature one encounters assumptions of the type
\[
\ip{Ax + \beta(x+y)}{z} \leq f(\norm{y}) - k\norm{x}
\]
for some (or all) $z$ belonging to the subdifferential of $\norm{x}$,
where $f$ is a function and $k$ a constant. Here $A$ actually stands
for the part of $A$ in a Banach space $E$ continuously embedded in
$L^2(D)$, $\ip{\cdot}{\cdot}$ stands for the duality between $E$ and
its dual, and the condition is assumed to hold for those $x$, $y$ for
which all terms are well defined. Often $E$ is chosen as a space of
continuous functions such as $C(\overline{D})$. This monotonicity-type
condition on $A$ and $\beta$ is precisely what one needs in order to
obtain a priori estimates by reducing the original equation to a
deterministic one with random coefficients, under the assumption of
additive noise. Using a figurative but rather accurate expression,
this methods amounts to ``subtracting the stochastic
convolution''. Our estimates are obtained mostly by stochastic
calculus, for which the standard notion of monotonicity suffices.
Among such estimates we obtain the integrability of (the potential of)
the nonlinear drift term $\beta$ with respect to the invariant measure
$\mu$, which is known to be a delicate issue, especially for
non-gradient systems (cf. the discussion in \cite{EsS:locL}). These
results allow us to show that the Kolmogorov operator associated to
the stochastic equation \eqref{eq:0} with additive noise is
essentially $m$-dissipative in $L^1(H,\mu)$. This implies that the
closure of the Kolmogorov operator in $L^1(H,\mu)$ generates a
Markovian semigroup of contractions, which is a $\mu$-version of the
transition semigroup generated by the solution to the stochastic
equation. It is worth mentioning that the variational-type setting,
while allowing for a very general drift term $\beta$, gives raise to
quite many technical issues in the study of Kolmogorov equations, for
instance because test functions in function spaces on $V$ and $V'$
naturally appear.

\smallskip

We conclude this introductory section with a brief description of the
structure of the paper and of the main results.
In Section~\ref{sec:ass} we state the basic assumptions which are in
force throughout the paper, and recall the well-posedness result for
equation \eqref{eq:0} obtained in \cite{cm:luca}.
For the reader's convenience we collect in Section~\ref{sec:prelim}
some tools needed in the sequel, such as Prokhorov's theorem on
compactness of sets of probability measures, and the
Krylov--Bogoliubov criterion for the existence of an invariant
measure for a Markovian transition semigroup.
Section~\ref{sec:aux} is devoted to auxiliary results, most of which
should be interesting in their own right, that underpin our subsequent
arguments. In particular, we prove two generalized versions of the
classical It\^o formula in the variational setting for equation
\eqref{eq:0}: one for the square of the norm, and another one
extending a very useful but not-so-well known version for more general
smooth functions, originally obtained by Pardoux
(see~\cite[p.~62--ff]{Pard}). Furthermore, we establish results on the
first and second-order differentiability, both in the G\^ateaux and
Fr\'echet sense, of (variational) solutions to semilinear equations
with regular drift with respect to the initial datum.
In Section~\ref{sec:inv} we prove that the transition semigroup $P$
generated by the solution to \eqref{eq:0} admits an ergodic invariant
measure $\mu$, which in also shown to be unique and strongly mixing if
$\beta$ is superlinear. These results follow mainly by a priori
estimates (which, in turn, are obtained by stochastic calculus) and
compactness.
Finally, Section~\ref{sec:kolm} deals with the Kolmogorov equation
associated to \eqref{eq:0}. In particular, we characterize the
infinitesimal generator $-L$ of the transition semigroup $P$ on
$L^1(H,\mu)$ as the closure of the Kolmogorov operator $-L_0$. After
showing that $L_0$ is dissipative and coincides with $L$ on a suitably
chosen dense subset of $L^1(H,\mu)$, we prove that the image of
$I+L_0$ is dense in $L^1(H,\mu)$, so that the Lumer-Phillips theorem
can be applied. Due to the variational formulation of the problem, the
latter point turns out to be rather delicate, even though the general
approach follows a typical scheme: we first introduce appropriate
regularizations of $L_0$, for which the Kolmogorov equation can be
solved by established techniques, then we pass to the limit in the
regularization's parameters. Here the generalized It\^o formulas and
the differentiability results proved in Section~\ref{sec:aux} play a
key role.


\ifbozza\newpage\else\fi
\section{General assumptions and well-posedness}
\label{sec:ass}
Before stating the hypotheses on the coefficients and on the initial
datum of equation \eqref{eq:0} that will be in force throughout the
paper, let us fix some notation.

\subsection{Notation}
Given two Banach (real) spaces $E$ and $F$, the space of bounded
linear operators from $E$ to $F$ will be denoted by $\cL(E,F)$. When
$F=\erre$, we shall just write $E'$. If $E$ and $F$ are Hilbert
spaces, $\cL^2(E,F)$ stands for the ideal of $\cL(E,F)$ of
Hilbert-Schmidt operators.
The Hilbert space $L^2(D)$ will be denoted by $H$, and its norm and
scalar product by $\norm{\cdot}$ and $\ip{\cdot}{\cdot}$,
respectively.
For any topological space $E$, the Borel $\sigma$-algebra on $E$ will
be denoted by $\cB(E)$. All measures on $E$ are intended to be defined
on its Borel $\sigma$-algebra, unless otherwise stated. The spaces of
bounded Borel-measurable and bounded continuous functions on $E$ will
be denoted by $B_b(E)$ and $C_b(E)$, respectively.

\subsection{Assumptions}
Let $V$ be a separable Hilbert space densely, continuously and
compactly embedded in $H=L^2(D)$. The duality form between $V$ and
$V'$ is also denoted by $\ip{\cdot}{\cdot}$, as customary. We assume
that $A \in \cL(V,V')$ satisfies the following properties:
\begin{itemize}
\item[(i)] there exists $C>0$ such that $\ip{Av}{v}\geq C\norm{v}_V^2$
  for every $v\in V$;
\item[(ii)] the part of $A$ in $H$ can be uniquely extended to an
  $m$-accretive operator $A_1$ on $L^1(D)$;
\item[(iii)] for every $\delta>0$, the resolvent $(I+\delta A_1)^{-1}$
  is sub-Markovian, i.e. for every $f\in L^1(D)$ such that
  $0 \leq f \leq1$ a.e. on $D$, we have
  $0 \leq (I+\delta A)^{-1}f \leq 1$ a.e. on $D$;
\item[(iv)] there exists $m\in\enne$ such that
  $(I+\delta A_1)^{-m} \in \cL(L^1(D), L^\infty(D))$.
\end{itemize}

\medskip

Let us now consider the non-linear term in the drift.
We assume that 
\begin{itemize}
\item[(v)] $\beta \subset \erre \times \erre$ is a maximal monotone
  graph such that $0\in\beta(0)$ and $\dom(\beta)=\erre$.
\end{itemize}
Let $j: \erre \to \erre_+$ be the unique convex lower semicontinuous
function such that $j(0)=0$ and $\beta=\partial j$, in the sense of
convex analysis. We assume that
\begin{itemize}
\item[(vi)] $\displaystyle \limsup_{|r|\to\infty} \frac{j(r)}{j(-r)} <
  \infty.$
\end{itemize}
This hypothesis is obviously satisfied if $j$ (or, equivalently,
$\beta$) is symmetric.
Denoting the convex conjugate of $j$ by $j^*$, it is well known that
the hypothesis $\dom(\beta)=\erre$ is equivalent to the superlinearity
of $j^*$ at infinity, i.e.
\[
  \lim_{|r|\to\infty} \frac{j^*(r)}{|r|} = \infty.
\]
We are going to need the following property implied by assumption
(vi): there exists a strictly positive number $\eta$ such that, for
every measurable function $y:D \to \erre$, $j^*(y) \in L^1(D)$ implies
$j^*(\eta|y|)\in L^1(D)$. In fact, from (vi) we deduce that there
exist $R>0$ and $M_1=M_1(R)>0$ such that $j(r) \leq M_1 j(-r)$ for
$|r|\geq R$. Since $j \geq 0$, one can choose $M_1>1$ without loss of
generality. Setting $M_2:=\max\{j(r): |r|\leq R\}$, which is finite by
continuity of $j$, we deduce that
\[
  j(r) \leq M_1 j(-r) + M_2 \qquad\forall r \in \erre.
\]
Taking convex conjugates on both sides we infer that
\[
  j^*(r) \geq M_1j^*(-r/M_1) - M_2 \qquad\forall r \in \dom(j^*).
\]
Setting $\eta:=1/M_1<1$ and recalling that $j^*(0)=0$, hence $j^*$ is
positive on $\erre$ and increasing on $\erre_+$, one has
\begin{align*}
j^*(\eta\abs{y}) 
&= j^*(\eta y) 1_{\{y \geq 0\}} 
+ j^*(-\eta y) 1_{\{y<0\}}\\
&\leq j^*(y) 1_{\{y \geq 0\}} 
+ \eta j^*(y) 1_{\{y \geq 0\}} + \eta M_2\\
&\leq j^*(y) + M_2 \in L^1(D).
\end{align*}

\medskip

The assumptions on the Wiener process $W$ and the diffusion
coefficient $B$ are standard: let $U$ be a separable Hilbert space and
$W$ a cylindrical Wiener process on $U$, defined on a filtered
probability space $(\Omega,\cF,(\cF_t)_{t \in [0,T]},\P)$ satisfying
the so-called usual conditions.\footnote{Expressions involving random
  elements are always meant to hold $\P$-a.s. unless otherwise
  stated.} We assume that
\begin{itemize}
\item[(vii)] $B:H \to \cL^2(U,H)$ is Lipschitz-continuous and with
  linear growth, i.e. that there exists
a positive constants $L_B$ such that
\begin{align*}
  \norm{B(x)-B(y)}_{\cL^2(U,H)} &\leq L_B\norm{x-y} &\forall x,y\in H,\\
  \norm{B(x)}_{\cL^2(U,H)} &\leq L_B(1+\norm{x}) &\forall x\in H.
\end{align*}
\end{itemize}
Finally, the initial datum $X_0$ is assumed to be $\cF_0$-measurable
and such that $\E\norm{X_0}^2$ is finite.
All hypotheses just stated will be tacitly assumed to hold throughout.

\medskip

The following well-posedness result for equation \eqref{eq:0} has been
proved in \cite{cm:luca}, allowing the diffusion coefficient $B$ to be
also random and time-dependent.
\begin{thm}
  \label{thm:WP}
  There is a unique pair $(X,\xi)$, with $X$ a $V$-valued adapted
  process and $\xi$ an $L^1(D)$-valued predictable process, such
  that
  \begin{gather*}
    X \in L^2(\Omega; L^\infty(0,T; H))\cap L^2(\Omega; L^2(0,T; V)),
    \qquad
    \xi \in L^1(\Omega\times(0,T)\times D),\\
    j(X)+j^*(\xi) \in L^1(\Omega\times(0,T)\times D), \qquad \xi \in \beta(X)
    \quad\text{a.e.~in } \Omega\times(0,T)\times D,
  \end{gather*}
  and
  \[
  X(t) + \int_0^tAX(s)\,ds + \int_0^t\xi(s)\,ds =
  X_0+\int_0^tB(X(s))\,dW(s) \quad\forall t\in[0,T],
  \quad\P\text{-a.s.}
  \]
  in $V' \cap L^1(D)$. Moreover, $X$ is $\P$-a.s. pathwise weakly
  continuous from $[0,T]$ to $H$, and the solution map
  \begin{align*}
    L^2(\Omega;H) &\longrightarrow L^2(\Omega; L^\infty(0,T;H)) \cap
    L^2(\Omega; L^2(0,T; V))\\
    X_0 &\longmapsto X
  \end{align*}
  is Lipschitz-continuous.
\end{thm}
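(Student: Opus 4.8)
The plan is to construct the solution by a Yosida regularization of the singular drift $\beta$, combined with uniform a priori estimates and compactness, and then to obtain uniqueness and Lipschitz dependence directly from monotonicity. Concretely, one replaces $\beta$ by its Yosida approximation $\beta_\lambda$ (Lipschitz, maximal monotone, $\beta_\lambda(0)=0$, $\beta_\lambda=j_\lambda'$ with $j_\lambda$ the Moreau--Yosida regularization of $j$) and solves the regularized equations $dX^\lambda + AX^\lambda\,dt + \beta_\lambda(X^\lambda)\,dt = B(X^\lambda)\,dW$ by the variational theory of Krylov--Rozovski\u{\i} and Pardoux: the operator $v\mapsto Av+\beta_\lambda(v)$ now satisfies the required coercivity (from (i)), weak monotonicity, hemicontinuity and (linear) growth conditions, so there is a unique $X^\lambda\in L^2(\Omega;C([0,T];H))\cap L^2(\Omega;L^2(0,T;V))$. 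One then derives bounds on $(X^\lambda,\xi^\lambda)$, $\xi^\lambda:=\beta_\lambda(X^\lambda)$, uniform in $\lambda$, passes to the limit, and finally treats well-posedness of the limit problem.

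For the a priori estimates, It\^o's formula for $\norm{X^\lambda}^2$ together with $\ip{AX^\lambda}{X^\lambda}\ge C\norm{X^\lambda}_V^2$, $\ip{\xi^\lambda}{X^\lambda}\ge0$ (monotonicity of $\beta_\lambda$ through the origin), the linear growth of $B$, the Burkholder--Davis--Gundy inequality and Gronwall's lemma yields a bound for $X^\lambda$ in $L^2(\Omega;L^\infty(0,T;H))\cap L^2(\Omega;L^2(0,T;V))$ independent of $\lambda$. The crucial and delicate estimate is the one on the singular term: one needs $\xi^\lambda$ bounded in $L^1(\Omega\times(0,T)\times D)$ together with bounds on $\int j(X^\lambda)$ and $\int j^*(\xi^\lambda)$, obtained by pathwise arguments (testing the equation against the solution and against $\xi^\lambda$, using the Fenchel identity $j(J_\lambda X^\lambda)+j^*(\xi^\lambda)=\xi^\lambda\,J_\lambda X^\lambda$, valid since $\xi^\lambda\in\beta(J_\lambda X^\lambda)$, and It\^o's formula for $\int_D j_\lambda(X^\lambda)\,dx$). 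Here assumption (vi) — through the $\eta$-integrability property established above — is exactly what upgrades a bare $L^1$ bound into uniform integrability of $\{\xi^\lambda\}$, so that the de la Vall\'ee Poussin / Dunford--Pettis criterion applies; this is the only place where the absence of any growth restriction on $\beta$ is felt.

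With these estimates in hand, weak-$*$ and weak compactness give, along a subsequence, $X^\lambda\rightharpoonup X$ in $L^2(\Omega;L^\infty(0,T;H))$ and in $L^2(\Omega;L^2(0,T;V))$, while Dunford--Pettis gives $\xi^\lambda\rightharpoonup\xi$ in $L^1(\Omega\times(0,T)\times D)$. Since $V\embed H$ compactly, a stochastic Aubin--Lions--Simon argument (or a direct Cauchy estimate from It\^o's formula for $\norm{X^\lambda-X^\mu}^2$ and the resolvent identities for Yosida approximations, when the $\xi^\lambda$ enjoy extra integrability) upgrades this to strong convergence of $X^\lambda$ in $L^2(\Omega\times(0,T)\times D)$, whence $B(X^\lambda)\to B(X)$ in $L^2(\Omega;L^2(0,T;\cL^2(U,H)))$ and one can pass to the limit in the regularized equation in $V'\cap L^1(D)$. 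The inclusion $\xi\in\beta(X)$ a.e.\ follows by a Minty-type monotonicity argument, using $\xi^\lambda\in\beta(J_\lambda X^\lambda)$, $J_\lambda X^\lambda=X^\lambda-\lambda\xi^\lambda\to X$, and $\limsup\E\int\xi^\lambda J_\lambda X^\lambda\le\E\int\xi X$ read off from the energy identity; weak lower semicontinuity of $u\mapsto\int j(u)$ and $u\mapsto\int j^*(u)$ gives $j(X)+j^*(\xi)\in L^1$. Pathwise weak continuity of $X$ in $H$ holds because, by the equation, $X$ coincides a.e.\ with a process continuous in time with values in $V'\cap L^1(D)$, while $X\in L^\infty(0,T;H)$.

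Finally, uniqueness and Lipschitz dependence follow from one and the same estimate: for two solutions $(X_1,\xi_1)$, $(X_2,\xi_2)$ with data $X_0^1$, $X_0^2$, the generalized It\^o formula for $\norm{X_1-X_2}^2$ in the variational setting (note that $\ip{\xi_1-\xi_2}{X_1-X_2}\ge0$ pointwise on $D$ by monotonicity of $\beta$ on $\erre$, so this term has a sign even though $\xi_i$ are merely $L^1$), together with (i), BDG, the Lipschitz continuity of $B$ and Gronwall's lemma, gives
\[
  \E\sup_{t\in[0,T]}\norm{X_1(t)-X_2(t)}^2 + \E\int_0^T\norm{X_1-X_2}_V^2
  \;\lesssim\; \E\norm{X_0^1-X_0^2}^2 ,
\]
which is the asserted Lipschitz continuity of the solution map; for $X_0^1=X_0^2$ it forces $X_1=X_2$, and then the integral equation gives $\int_0^t(\xi_1-\xi_2)\,ds\equiv0$, hence $\xi_1=\xi_2$. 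I expect the genuinely hard point to be the $L^1$ — equivalently, uniformly integrable — a priori bound on $\xi^\lambda$; the strong convergence of $X^\lambda$ needed both for $B(X^\lambda)$ and for the Minty identification is the second, more technical, obstacle.
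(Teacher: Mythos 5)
This theorem is not proved in the paper at all: it is quoted verbatim from \cite{cm:luca}, and the only internal indication of its proof is the remark in the introduction that well-posedness is obtained there ``by a combination of classical results by Pardoux and Krylov--Rozovski{\u\i} with pathwise estimates and weak compactness arguments''. Your outline (Yosida regularization of $\beta$, variational solvability of the regularized equation, uniform estimates, weak compactness, Minty identification, uniqueness by monotonicity) is exactly that strategy in broad strokes, so as a roadmap it is consistent with the cited proof.

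As a proof, however, it has two genuine gaps, both at the points you yourself flag. First, the strong convergence of $X^\lambda$ is never actually obtained: the Cauchy estimate via It\^o's formula for $\norm{X^\lambda-X^\mu}^2$ requires controlling $(\beta_\lambda(X^\lambda)-\beta_\mu(X^\mu))(X^\lambda-X^\mu)$ through terms of the form $(\lambda+\mu)\abs{\beta_\lambda(X^\lambda)}\,\abs{\beta_\mu(X^\mu)}$, which are not integrable under a mere $L^1$ bound on $\xi^\lambda$ (you concede this), and a ``stochastic Aubin--Lions'' argument only yields tightness/convergence in law, after which one must still return to the original probability space; yet without strong convergence in $L^2(\Omega;L^2(0,T;H))$ you can pass to the limit neither in $B(X^\lambda)$ (multiplicative noise) nor in the Minty inequality $\limsup\E\int\xi^\lambda J_\lambda X^\lambda\le\E\int\xi X$, since the latter requires the limit energy identity for $(X,\xi)$, i.e.\ a generalized It\^o formula of the type of Proposition~\ref{prop:Ito} whose validity itself must be established for $\xi$ only in $L^1$ with $j(X)+j^*(\xi)\in L^1$. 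Second, the role of assumption (vi) is misplaced: uniform integrability of $\{\xi^\lambda\}$ already follows from the uniform bound on $\E\int j^*(\xi^\lambda)$ and the superlinearity of $j^*$, which is equivalent to $\dom(\beta)=\erre$ and has nothing to do with (vi). What (vi) actually buys --- via the property that $j^*(y)\in L^1(D)$ implies $j^*(\eta\abs{y})\in L^1(D)$ --- is uniform integrability of the \emph{products} $\xi^\lambda J_\lambda X^\lambda$ (respectively $gY$ in Proposition~\ref{prop:Ito}, where the paper explicitly invokes the symmetry for exactly this purpose) and integrability of the cross terms $\xi_1 X_2$, $\xi_2 X_1$ in the uniqueness/Lipschitz estimate. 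Your parenthetical remark that $\ip{\xi_1-\xi_2}{X_1-X_2}\ge0$ ``so this term has a sign even though $\xi_i$ are merely $L^1$'' glosses over precisely this: the sign does not by itself legitimize applying the It\^o formula for $\norm{X_1-X_2}^2$, because the formula's hypotheses require integrability of the drift--solution pairing, and for the cross terms this is where (vi) (through Young's inequality $\abs{\xi_1 X_2}\le j^*(\abs{\xi_1})+j(\abs{X_2})$ and the comparison of $j^*(\abs{\cdot})$, $j(\abs{\cdot})$ with $j^*(\cdot)$, $j(\cdot)$) is indispensable. Until these two points are carried out, the scheme is a plausible outline rather than a proof.
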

\begin{rmk}
  In a forthcoming work we shall prove that the solution $X$ is
  actually pathwise continuous, not just weakly continuous, and that
  well-posedness continues to hold under local Lipschitz-continuity
  and linear growth assumptions on $B$. We shall also show how the
  regularity of the solution $X$ depends on the regularity of the
  diffusion coefficient $B$.
\end{rmk}


\ifbozza\newpage\else\fi
\section{Preliminaries}
\label{sec:prelim}
\subsection{Compactness in spaces of probability measures}
The set of probability measures on $E$ is denoted by $\cM_1(E)$ and
endowed with the topology $\sigma(\cM_1(E),C_b(E))$, which we shall
call the narrow topology.  We recall that a subset $\mathscr{N}$ of
$\cM_1(E)$ is called (uniformly) \emph{tight} if for every
$\varepsilon>0$ there exists a compact set $K_\varepsilon$ such that
$\mu(E \setminus K_\varepsilon) < \varepsilon$ for all
$\mu \in \mathscr{N}$. The following characterization of relative
compactness of sets of probability measures is classical (see,
e.g.,~\cite[\S 5.5]{Bbk:INT9}).
\begin{thm}[Prokhorov]
  Let $E$ be a complete separable metric space. A subset of $\cM_1(E)$
  is relatively compact in the narrow topology if and only if it is
  tight.
\end{thm}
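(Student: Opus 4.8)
The plan is to establish the two implications separately, using throughout that the narrow topology on $\cM_1(E)$ is metrizable (since $E$ is separable metrizable), so that relative compactness coincides with relative sequential compactness. For the implication ``tight $\Rightarrow$ relatively compact'' I would first embed $E$ homeomorphically onto a Borel subset of the Hilbert cube $Q:=[0,1]^\enne$ (possible because $E$ is separable metric) and identify, via this embedding, each $\mu\in\cM_1(E)$ with a Borel probability measure on $Q$ concentrated on the image of $E$. The space $\cM_1(Q)$ with the narrow topology is compact and metrizable: by the Riesz representation theorem it is a weak-$\ast$ closed subset of the unit ball of $C(Q)'$, which is weak-$\ast$ compact by Banach--Alaoglu and, $C(Q)$ being separable, metrizable. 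Thus, given a tight set $\mathscr{N}\subseteq\cM_1(E)$ and a sequence $(\mu_k)\subseteq\mathscr{N}$, some subsequence converges narrowly in $\cM_1(Q)$ to a measure $\nu$. Choosing an increasing sequence of compacts $K_m\subseteq E$ with $\sup_k\mu_k(E\setminus K_m)\to0$ and invoking the Portmanteau inequality $\nu(K_m)\ge\limsup_k\mu_k(K_m)$ --- valid because each $K_m$ is closed in $Q$ --- gives $\nu(\bigcup_mK_m)=1$, so $\nu$ is concentrated on $E$ and hence equals (the image of) some $\mu\in\cM_1(E)$. Extending each $f\in C_b(E)$ from the compacts $K_m$ to functions in $C(Q)$ via Tietze's theorem and using tightness to bound the error off $K_m$, one then checks that $\mu_k\to\mu$ narrowly in $\cM_1(E)$ along the subsequence; hence $\mathscr{N}$ is relatively compact.

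For the converse I would argue by contradiction: assume $\mathscr{N}$ is relatively compact but not tight, so that there is $\eps>0$ such that every compact $K\subseteq E$ admits $\mu\in\mathscr{N}$ with $\mu(E\setminus K)\ge\eps$. Fix a dense sequence $(x_i)$ in $E$ and write $G_{n,k}$ for the union of the open balls of radius $1/n$ centred at $x_1,\dots,x_k$, so that $G_{n,k}\uparrow E$ as $k\to\infty$. The crucial claim is that for every $n\in\enne$ and $\delta>0$ there is a $k$ with $\mu(G_{n,k})>1-\delta$ for all $\mu\in\mathscr{N}$. Were this false, one could choose $\mu_k\in\mathscr{N}$ with $\mu_k(G_{n,k})\le1-\delta$; passing to a narrowly convergent subsequence $\mu_{k_j}\to\mu$ and applying the open-set Portmanteau inequality to each fixed $G_{n,k}$ --- noting that $\mu_{k_j}(G_{n,k})\le\mu_{k_j}(G_{n,k_j})\le1-\delta$ once $k_j\ge k$ --- would give $\mu(G_{n,k})\le1-\delta$ for all $k$, hence $\mu(E)\le1-\delta<1$, a contradiction. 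Granting the claim, pick for each $n$ an index $k_n$ with $\mu(E\setminus G_{n,k_n})<\eps\,2^{-n-1}$ for all $\mu\in\mathscr{N}$, and set $K:=\bigcap_n\overline{G_{n,k_n}}$. Then $K$ is closed and, being covered for every $n$ by finitely many balls of radius $2/n$, totally bounded, hence compact by completeness of $E$; yet $\mu(E\setminus K)\le\sum_n\mu(E\setminus\overline{G_{n,k_n}})<\eps/2$ for all $\mu\in\mathscr{N}$, contradicting the choice of $\eps$.

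The step I expect to be the main obstacle is the first implication: the soft part --- compactness of $\cM_1(Q)$ --- is routine, but one must carefully verify that tightness prevents mass from escaping to $Q\setminus E$ in the limit, and that narrow convergence in $\cM_1(Q)$ of measures concentrated on $E$ descends to narrow convergence in $\cM_1(E)$, which is precisely where the Tietze extension of test functions over the exhausting compacts is needed. The converse, by contrast, is a fairly mechanical combination of the Portmanteau theorem with the characterisation of compactness in a complete metric space as closedness together with total boundedness.
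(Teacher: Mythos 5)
Your argument is correct, but note that the paper does not prove this statement at all: Prokhorov's theorem is quoted as a classical result with a reference to the literature (Bourbaki), so there is no proof in the paper to compare against. What you give is the standard textbook proof: for sufficiency, the embedding of $E$ into the Hilbert cube $Q$, compactness and metrizability of $\cM_1(Q)$ via Riesz and Banach--Alaoglu, the closed-set Portmanteau inequality on the exhausting compacts $K_m$ to keep the limit measure from charging $Q\setminus E$, and the Tietze-extension argument to descend narrow convergence from $\cM_1(Q)$ to $\cM_1(E)$; for necessity, the covering argument with finite unions of small balls, the open-set Portmanteau inequality, and total boundedness plus completeness to produce the compact set. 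All the steps you flag as delicate are handled correctly (in particular, the limit measure is automatically a probability measure on $E$ because relative compactness is taken inside $\cM_1(E)$, which is what makes the contradiction $\mu(E)\le 1-\delta$ effective, and completeness of $E$ is genuinely needed only in the necessity direction). So the proposal is a valid self-contained proof of a result the paper simply imports.
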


\subsection{Markovian semigroups and ergodicity}
A family $P=(P_t)_{t\geq 0}$ of Markovian kernels on a measure space
$(E,\cE)$ such that $P_{t+s}=P_tP_s$ for all $t,s \geq 0$ is called a
Markovian semigroup. We recall that a Markovian kernel on $(E,\cE)$ is
a map $K:E \times \cE \to [0,1]$ such that (i) $x \mapsto K(x,A)$ is
$\cE$-measurable for each $A \in \cE$, (ii) $A \mapsto K(x,A)$ is a
measure on $\cE$ for each $x \in E$, and (iii) $K(x,E)=1$ for each
$x \in E$. A Markovian kernel $K$ on $(E,\cE)$ can naturally be
extended to the space $b\cE$ of $\cE$-measurable bounded functions by
the prescription
\[
f \longmapsto Kf := \int_E f(y)\,K(\cdot,dy).
\]
Then $K: b\cE \to b\cE$ is a linear, bounded, positive, $\sigma$-order
continuous map. Similarly, $K$ can be extended to positive measures on
$\cE$ setting
\[
  \mu \longmapsto \mu K(\cdot) := \int_E K(x,\cdot)\,\mu(dx).
\]
The notations $P_tf$ and $\mu P_t$, with $f$ $\cE$-measurable bounded
or positive function and $\mu$ positive measure on $\cE$, are hence to
be understood in this sense. We shall also assume that $P_0=I$ and
that $(t,x) \mapsto P_tf(x)$ is $\cB(\erre_+) \otimes \cE$-measurable.

A probability measure $\mu$ on $\cE$ is said to be an invariant
measure for the Markovian semigroup $P$ if
\[
  \int_E P_t f\,d\mu = \int_E f\,d\mu \qquad \forall f \in b\cE,
  \quad \forall t \geq 0,
\]
or, equivalently, if $\mu P_t = \mu$ for all $t \geq 0$. If $P$
admits an invariant measure $\mu$, then it can be extended to a
Markovian semigroup on $L^p(E,\mu)$, for every $p \geq 1$. 
The invariant measure $\mu$ is said to be ergodic for $P$ if
\[
\lim_{t \to \infty} \frac{1}{t} \int_0^t P_sf \,ds = \int_E f\,d\mu
\qquad \text{ in } L^2(E,\mu) \quad \forall f \in L^2(E,\mu),
\]
and strongly mixing if
\[
\lim_{t\to+\infty} P_tf = \int_E f\,d\mu
\qquad \text{ in } L^2(E,\mu) \quad \forall f \in L^2(E,\mu).
\]
We recall the following classical fact on the structure of the set of
ergodic measures: the ergodic invariant measures for $P$ are the
extremal points of the set of its invariant measures. In particular,
if $P$ admits a unique invariant measure $\mu$, then $\mu$ is
ergodic.
In order to state a criterion for the existence of invariant measures,
let us introduce, for any probability measure $\nu \in \cM_1(E)$, the
family of averaged measures $(\mu^\nu_t)_{t \geq 0}$ defined as
\[
\mu^\nu_t := \frac1t \int_0^t \nu P_s\,ds.
\]
\begin{thm}[Krylov and Bogoliubov]
  Let $(P_t)_{t \geq 0}$ be a (time-homogeneous) Markovian transition
  semigroup on a complete separable metric space $E$. Assume that
  \begin{itemize}
  \item[(a)] $(P_t)_{t \geq 0}$ has the Feller property, i.e. that it
    maps $C_b(E)$ into $C_b(E)$;
  \item[(b)] there exists $\nu \in \cM_1(E)$ such that the
    $(\mu^\nu_t)_{t \geq 0} \subset \cM_1(E)$ is tight.
  \end{itemize}
  Then the set of invariant measures for $(P_t)_{t \geq 0}$ is non-empty.
\end{thm}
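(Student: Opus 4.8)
The plan is to realize the invariant measure as a narrow limit point of the averaged measures $(\mu^\nu_t)_{t\ge0}$ and then to verify invariance by an averaging argument in which the semigroup property converts the defect of invariance into boundary terms that vanish in the limit. First I would invoke assumption (b) and Prokhorov's theorem: since $(\mu^\nu_t)_{t\ge0}\subset\cM_1(E)$ is tight and $E$ is a complete separable metric space, this family is relatively compact in the narrow topology. Hence there exist a sequence $t_n\uparrow\infty$ and $\mu\in\cM_1(E)$ with $\mu^\nu_{t_n}\to\mu$ narrowly; note that each $\mu^\nu_t$ is indeed a probability measure, being an average of the probability measures $\nu P_s$. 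This $\mu$ is the candidate invariant measure.

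The core step is to show that $\int_E P_sf\,d\mu=\int_E f\,d\mu$ for every $f\in C_b(E)$ and every $s\ge0$. Fix such $f$ and $s$. Using the joint measurability of $(t,x)\mapsto P_tf(x)$ and Fubini's theorem, for any $g\in b\cE$ one has $\int_E g\,d\mu^\nu_t=\frac1t\int_0^t\nu(P_rg)\,dr$; applying this with $g=P_sf$, invoking the semigroup identity $P_rP_s=P_{r+s}$ and the change of variable $u=r+s$, one obtains
\[
\int_E P_sf\,d\mu^\nu_{t_n}-\int_E f\,d\mu^\nu_{t_n}
=\frac1{t_n}\left(\int_{t_n}^{t_n+s}\nu(P_uf)\,du-\int_0^s\nu(P_uf)\,du\right).
\]
Since $\abs{\nu(P_uf)}\le\norm{f}_\infty$ for all $u\ge0$, the right-hand side is bounded in absolute value by $2s\norm{f}_\infty/t_n\to0$. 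On the other hand, the Feller property (assumption (a)) gives $P_sf\in C_b(E)$, so narrow convergence $\mu^\nu_{t_n}\to\mu$ yields $\int_E P_sf\,d\mu^\nu_{t_n}\to\int_E P_sf\,d\mu$ as well as $\int_E f\,d\mu^\nu_{t_n}\to\int_E f\,d\mu$. Combining, $\int_E P_sf\,d\mu=\int_E f\,d\mu$.

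Finally I would upgrade this to the full invariance property. For fixed $s\ge0$ the identity just proved says that the finite Borel measures $\mu P_s$ (defined by $\int_E g\,d(\mu P_s)=\int_E P_sg\,d\mu$) and $\mu$ assign the same integral to every $g\in C_b(E)$; since $C_b(E)$ is measure-determining on the metric space $E$, we conclude $\mu P_s=\mu$, hence $\int_E P_sf\,d\mu=\int_E f\,d\mu$ for all $f\in b\cE$. As $s\ge0$ was arbitrary, $\mu$ is an invariant measure for $P$, so the set of invariant measures is non-empty. The only genuine point in the argument is the boundary-term estimate above: averaging over $[0,t_n]$ converts the defect of invariance into two integrals over time windows of fixed length $s$ divided by $t_n$, which tend to $0$, and the passage to the limit on the left-hand side is legitimate precisely because the Feller property keeps $P_sf$ inside $C_b(E)$; without it one could not test narrow convergence against $P_sf$. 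The remaining ingredients — the Fubini computation for $\mu^\nu_t$ and the measure-determining property of $C_b(E)$ — are routine.
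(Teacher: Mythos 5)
Your argument is correct and complete: it is the standard Krylov--Bogoliubov proof (Prokhorov compactness of the averaged measures, the shift identity turning the invariance defect into two boundary integrals of length $s$ divided by $t_n$, the Feller property to pass to the limit against $P_sf\in C_b(E)$, and the fact that $C_b(E)$ is measure-determining to upgrade to $\mu P_s=\mu$). Note that the paper itself only recalls this theorem as a classical fact in the preliminaries and gives no proof, so there is nothing to compare against; your use of the standing assumption that $(t,x)\mapsto P_tf(x)$ is jointly measurable to justify the Fubini step is exactly the right hypothesis to invoke.
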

Note that if $x \in E$ and $\nu$ is the Dirac measure at $x$, then
$\nu P_s = P_s(x,\cdot)$. Then condition (b) is satisfied if there
exists $x \in E$ such that the family of measures
\[
\biggl( \frac1t \int_0^t P_s(x,\cdot)\,ds \biggr)_{t\geq 0}
\]
is tight. It is easily seen that this latter condition is in turn
satisfied if $(P_t(x,\cdot))_{t \geq 0} \subset \cM_1(E)$
is tight.

\ifbozza\newpage\else\fi
\section{Auxiliary results}\label{sec:aux}
To prove the main results we shall need some auxiliary results that
are interesting in their own right, and that are collected in this
section. In particular, we recall or prove some It\^o-type formulas
and provide conditions for the differentiability of solutions to
equations in variational form with respect to the initial datum.

\subsection{It\^o formulas}
The following version of It\^o's formula for the square of the norm is
\cite[Proposition~6.2]{cm:luca}.
\begin{prop}
  \label{prop:Ito}
  Assume that an adapted process
  \[
    Y \in L^0(\Omega; L^\infty(0,T; H)) \cap L^0(\Omega; L^2(0,T; V))
  \]
  is such that
  \[
    Y(t) + \int_0^tAY(s)\,ds + \int_0^t g(s)\,ds = Y_0 +
    \int_0^tG(s)\,dW(s)
  \]
  in $L^1(D)$ for all $t \in [0,T]$, where
  $Y_0 \in L^0(\Omega,\cF_0; H)$, $G$ is a progressive
  $\cL^2(U,H)$-valued process such that
  \[
  G \in L^2(\Omega\times(0,T);\cL^2(U,H)),
  \]
  $g$ is an adapted $L^1(D)$-valued process such that
  \[
  g \in L^0(\Omega; L^1(0,T; L^1(D))),
  \]
  and there exists $\alpha>0$ for which
  \[
  j(\alpha Y) + j^*(\alpha g) \in L^1(\Omega \times(0,T) \times D).
  \]
  Then  
  \begin{align*}
    &\frac12 \norm{Y(t)}^2 + \int_0^t\ip[\big]{AY(s)}{Y(s)}\,ds
      + \int_0^t\!\!\int_D g(s,x) Y(s,x)\,dx\,ds \\
    &\hspace{3em} = \frac12 \norm{Y_0}^2
      + \frac12\int_0^t \norm{G(s)}^2_{\cL^2(U,H)}\,ds
      + \int_0^t Y(s)G(s)\,dW(s) \qquad\forall t\in[0,T].
  \end{align*}
\end{prop}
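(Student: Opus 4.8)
The plan is to prove the formula by a regularization that smooths the $L^1(D)$-valued term $g$ until an elementary It\^o formula for the square of the $H$-norm becomes applicable, and then to pass to the limit; assumptions (ii)--(iv) are exactly what makes the smoothing work. Preliminarily one checks that every term in the asserted identity is well defined, the only nontrivial one being $\int_0^t\!\int_D gY$. With $\eta>0$ the constant produced in \S\ref{sec:ass} and $\alpha':=\eta\alpha$, the computations there give $j(\alpha'\abs{Y})\leq M_1 j(\alpha Y)+M_2$ and $j^*(\alpha'\abs{g})=j^*(\eta\abs{\alpha g})\in L^1(D)$, so Young's inequality $ab\le j(a)+j^*(b)$ with $a=\alpha'\abs{Y}$, $b=\alpha'\abs{g}$ yields
\[
  \abs{gY}\leq\frac1{(\alpha')^2}\Bigl(j(\alpha'\abs{Y})+j^*(\alpha'\abs{g})\Bigr)\in L^1(\Omega\times(0,T)\times D).
\]
Moreover, since $t\mapsto\int_0^t\bigl(\norm{Y(s)}_V^2+\norm{G(s)}_{\cL^2(U,H)}^2+\norm{g(s)}_{L^1(D)}+\int_D(j(\alpha Y)+j^*(\alpha g))\,dx\bigr)\,ds$ is an adapted continuous increasing process, $t\mapsto\norm{Y(t)}$ is adapted and lower semicontinuous (as $Y$ is weakly continuous in $H$), and $\Omega=\bigcup_N\{\norm{Y_0}\leq N\}$, a routine stopping-time argument reduces the proof to the case in which all of these quantities are bounded by a deterministic constant.

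For the regularization, let $m\in\enne$ be as in (iv) and set $R_\lambda:=(I+\lambda A_1)^{-m}\in\cL(L^1(D),L^\infty(D))$ for $\lambda>0$. Being a function of $A$, $R_\lambda$ commutes with $A$, and by (iii) it is a sub-Markovian contraction on $L^p(D)$ for every $p\in[1,\infty]$; by standard resolvent properties it is bounded on $V$, uniformly in $\lambda$, and $R_\lambda\to I$ strongly on $L^1(D)$, on $H$ and on $V$. Put $Y^\lambda:=R_\lambda Y$, $g^\lambda:=R_\lambda g$, $G^\lambda:=R_\lambda G$, $Y_0^\lambda:=R_\lambda Y_0$. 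Then $Y^\lambda$ is adapted, $L^\infty(D)$-valued, $H$-continuous, belongs to $L^2(0,T;V)$, and $AY^\lambda\in L^2(0,T;H)$ (here the localization is used); applying $R_\lambda$ to the equation for $Y$ gives
\[
  Y^\lambda(t)+\int_0^t AY^\lambda(s)\,ds+\int_0^t g^\lambda(s)\,ds=Y_0^\lambda+\int_0^t G^\lambda(s)\,dW(s),
\]
whose drift density $-AY^\lambda-g^\lambda$ lies in $L^1(0,T;H)$. It\^o's formula for the square of the $H$-norm of such an $H$-valued semimartingale --- which is elementary, being obtained by projecting onto an orthonormal basis of $H$ and summing the scalar It\^o formula --- then yields the identity in the statement with $Y,g,G,Y_0$ replaced by $Y^\lambda,g^\lambda,G^\lambda,Y_0^\lambda$ and $\ip{AY^\lambda}{Y^\lambda}$ read as the scalar product in $H$ (which is licit since $AY^\lambda\in H$).

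It remains to let $\lambda\to0$. By the uniform bounds from the localization together with $R_\lambda\to I$ strongly, $Y^\lambda\to Y$ in $L^2(0,T;V)$ and $Y^\lambda(t)\to Y(t)$ in $H$, whence $\tfrac12\norm{Y^\lambda(t)}^2$, $\tfrac12\norm{Y_0^\lambda}^2$, $\int_0^t\norm{G^\lambda}^2_{\cL^2(U,H)}\,ds$ and $\int_0^t\ip{AY^\lambda}{Y^\lambda}\,ds$ converge to the corresponding terms with $\lambda=0$, while $\int_0^t Y^\lambda G^\lambda\,dW$ converges, along a subsequence and locally uniformly in $t$, by the Burkholder--Davis--Gundy inequality and dominated convergence. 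The delicate point, which I expect to be the main obstacle, is the convergence $\int_0^t\!\int_D g^\lambda Y^\lambda\,dx\,ds\to\int_0^t\!\int_D gY\,dx\,ds$. By Young's inequality, $\abs{g^\lambda Y^\lambda}\leq(\alpha')^{-2}\bigl(j(\alpha'\abs{Y^\lambda})+j^*(\alpha'\abs{g^\lambda})\bigr)$; since $\abs{R_\lambda v}\leq R_\lambda\abs{v}$ by positivity, $j$ and $j^*$ are convex, nondecreasing on $\erre_+$ and vanish at $0$, and $R_\lambda$ is sub-Markovian, Jensen's inequality for sub-Markovian operators gives $j(\alpha'\abs{Y^\lambda})\leq R_\lambda j(\alpha'\abs{Y})$ and $j^*(\alpha'\abs{g^\lambda})\leq R_\lambda j^*(\alpha'\abs{g})$; hence $\abs{g^\lambda Y^\lambda}$ is dominated by $(\alpha')^{-2}R_\lambda\bigl(j(\alpha'\abs{Y})+j^*(\alpha'\abs{g})\bigr)$, which converges in $L^1(\Omega\times(0,T)\times D)$ as $\lambda\to0$ and is therefore uniformly integrable along any sequence $\lambda_k\to0$. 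Since moreover $g^\lambda Y^\lambda\to gY$ a.e.\ along a subsequence (because $R_\lambda v\to v$ in $L^1(D)$ for every $v\in L^1(D)$), Vitali's convergence theorem gives the sought convergence. Thus the regularized identity passes to the limit, which concludes the proof.
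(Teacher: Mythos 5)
Your proposal is correct and follows essentially the same route as the paper's proof: regularize by powers of the resolvent $(I+\delta A_1)^{-m}$ (ultracontractivity placing everything in $H$), apply a classical It\^o formula to the regularized equation, and pass to the limit $\delta\to0$, with the term $\int_0^t\!\int_D g^\delta Y^\delta$ handled via uniform integrability stemming from the symmetry assumption on $j$. Your added localization step and the explicit Young/Jensen-for-sub-Markovian-operators/Vitali argument merely spell out details the paper leaves implicit, so no substantive difference or gap.
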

\begin{proof}
  Since the resolvent of $A_1$ is ultracontractive by assumption,
  there exists $m \in \enne$ such that
  \[  
  (I+\delta A_1)^{-m}: L^1(D) \to H \qquad \forall \delta>0.
  \]
  Using a superscript $\delta$ to denote the action of
  $(I+\delta A_1)^{-k}$, we have
  \[
    Y^\delta(t) + \int_0^tAY^\delta(s)\,ds + \int_0^t g^\delta(s)\,ds
    = Y^\delta_0 + \int_0^tG^\delta(s)\,dW(s) \qquad\forall t\in[0,T],
  \]
  where $g^\delta \in L^1(0,T; H)$, hence the classical It\^o
  formula yields, for every $\delta>0$,
  \begin{align*}
    &\frac12 \norm{Y^\delta(t)}^2 + \int_0^t\ip[\big]{AY^\delta(s)}{Y^\delta(s)}\,ds
      + \int_0^t\!\!\int_D g^\delta(s,x) Y^\delta(s,x)\,dx\,ds \\
    &\hspace{3em} = \frac12 \norm{Y^\delta_0}^2
      + \frac12\int_0^t \norm{G^\delta(s)}^2_{\cL^2(U,H)}\,ds
      + \int_0^t Y^\delta(s)G^\delta(s)\,dW(s) \qquad\forall t\in[0,T].
  \end{align*}
  We are now going to pass to the limit as $\delta \to 0$. By the
  assumptions on $A$ and the regularity properties of $Y$, $g$, $Y_0$,
  and $G$, one has
  \begin{align*}
  Y^\delta(t) \to Y(t) \qquad&\text{in } H \quad\forall t\in[0,T],\\
  Y^\delta \to Y \qquad&\text{in } L^2(0,T; V),\\
  AY^\delta \to AY \qquad&\text{in } L^2(0,T; V'),\\
  g^\delta\to g \qquad&\text{in } L^1(0,T; L^1(D)),\\
  Y_0^\delta \to Y_0 \qquad&\text{in } H,\\
  G^\delta\to G \qquad&\text{in } L^2(\Omega; L^2(0,T; \cL^2(U,H))).
  \end{align*}
  This implies
  \[
  \int_0^t\ip[\big]{AY^\delta(s)}{Y^\delta(s)}\,ds \longrightarrow
  \int_0^t\ip[\big]{AY(s)}{Y(s)}\,ds
  \]
  and
  \[
  \int_0^t \norm{G^\delta(s)}^2_{\cL^2(U,H)}\,ds \longrightarrow 
  \int_0^t \norm{G(s)}^2_{\cL^2(U,H)}\,ds.
  \]
  Using the dominated convergence theorem, it is
  not difficult to check that $\norm{Y^\delta G^\delta -
    YG}^2_{\cL^2(U,\erre)}$ converges to zero in probability, hence
  also (along a subsequence)
  \[
  \int_0^t Y^\delta(s)G^\delta(s)\,dW(s) \longrightarrow
  \int_0^t Y(s)G(s)\,dW(s).
  \]
  Finally, the symmetry assumption on $j$ ensures that
  $(g^\delta Y^\delta)$ is uniformly integrable on $(0,T)\times D$, so
  that
  \[
    \int_0^t\!\!\int_D g^\delta(s,x) Y^\delta(s,x)\,dx\,ds \to
    \int_0^t\!\!\int_D g(s,x) Y(s,x)\,dx\,ds.
    \qedhere
  \]
\end{proof}

We shall also need a simplified version of an It\^o formula in the
variational setting, due to Pardoux, for functions more general than
the square of the $H$-norm. For its proof (in a more general context)
we refer to \cite[p.~62-ff.]{Pard}.
\begin{prop}
  \label{prop:Ito_pard}
  Let $Y \in L^0(\Omega;L^2(0,T;V))$ be such that 
  \[
  Y(t)=Y_0 + \int_0^tv(s)\,ds + \int_0^tG(s)\,dW(s)
  \]
  for all $t\in[0,T]$, where $Y_0 \in L^0(\Omega, \cF_0, \P; H)$ and
  \[
  v \in L^0(\Omega; L^1(0,T; H)) \oplus L^0(\Omega; L^2(0,T; V'))
  \]
  is adapted and $G\in L^2(\Omega\times(0,T);\cL^2(U,H))$
  is progressively measurable. Then, for any $F \in C^2_b(H) \cap C^1_b(V)$, one has
  \begin{align*}
    F(Y(t))
    &= F(Y_0) + \int_0^t DF(Y(s))v(s) \,ds
      + \int_0^t DF(Y(s)) G(s)\,dW(s)\\
    & \quad + \frac12 \int_0^t \operatorname{Tr}\bigl(%
      G^*(s)D^2F(Y(s))G(s)\bigr)\,ds
  \end{align*}
  for every $t \in [0,T]$, $\P$-almost surely.
\end{prop}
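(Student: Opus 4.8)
The plan is to recover the formula from the classical It\^o formula in Hilbert space by a Galerkin-type regularization, in the spirit of Pardoux's original argument and of the proof of Proposition~\ref{prop:Ito}. Let $\Lambda$ be the positive self-adjoint operator on $H$ canonically associated with the Gelfand triple $V\embed H\embed V'$, so that $\dom(\Lambda^{1/2})=V$ with $\norm{\Lambda^{1/2}v}$ an equivalent norm on $V$; since the embedding $V\embed H$ is compact, $\Lambda$ has compact resolvent, hence there is an orthonormal basis $(e_k)_{k\in\enne}$ of $H$ made of eigenvectors of $\Lambda$, with $e_k\in V$. Denote by $\pi_n$ the orthogonal projection of $H$ onto $V_n:=\operatorname{span}(e_1,\dots,e_n)$: then $\pi_n$ extends to a contraction of $V'$, restricts to a contraction of $V$, and $\pi_n\to I$ strongly on each of $V$, $H$, $V'$ as $n\to\infty$. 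Applying $\pi_n$ to the integral identity satisfied by $Y$, and using that $\pi_n$ commutes with Bochner and stochastic integrals, one sees that $Y_n:=\pi_nY$ is a continuous $V_n$-valued semimartingale,
\[
Y_n(t)=\pi_nY_0+\int_0^t\pi_nv(s)\,ds+\int_0^t\pi_nG(s)\,dW(s),
\]
with drift $\pi_nv=\pi_nv_1+\pi_nv_2\in L^1(0,T;V_n)$ $\P$-a.s.\ and diffusion $\pi_nG\in L^2(\Omega\times(0,T);\cL^2(U,V_n))$.

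Since $V_n$ is finite-dimensional and $F|_{V_n}\in C^2_b(V_n)$, with $D(F|_{V_n})=\pi_nDF$ and $D^2(F|_{V_n})=\pi_nD^2F\pi_n$, the classical finite-dimensional It\^o formula applies pathwise to $t\mapsto F(Y_n(t))$ and gives
\begin{align*}
F(Y_n(t)) &= F(\pi_nY_0) + \int_0^t \ip{DF(Y_n(s))}{\pi_nv(s)}\,ds
  + \int_0^t DF(Y_n(s))\pi_nG(s)\,dW(s)\\
&\quad + \frac12\int_0^t \operatorname{Tr}\bigl((\pi_nG(s))^*D^2F(Y_n(s))\pi_nG(s)\bigr)\,ds
\end{align*}
for all $t\in[0,T]$, $\P$-almost surely. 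One then lets $n\to\infty$ in each term. For $\P$-a.e.\ $\omega$ and a.e.\ $s$ one has $Y_n(s)=\pi_nY(s)\to Y(s)$ in $V$, hence in $H$; by continuity of $F$, $DF$, $D^2F$ and $\pi_n\to I$: $F(\pi_nY_0)\to F(Y_0)$; $\ip{DF(Y_n(s))}{\pi_nv_1(s)}\to\ip{DF(Y(s))}{v_1(s)}$, dominated in $n$ by $\norm{DF}_\infty\norm{v_1(s)}\in L^1(0,T)$; while, rewriting $\ip{DF(Y_n(s))}{\pi_nv_2(s)}=\ip{\pi_nDF(Y_n(s))}{v_2(s)}$ as a pairing between $V$ and $V'$ — this is where the assumption $F\in C^1_b(V)$, ensuring $DF(Y_n(s))\in V$, is used — and noting $\pi_nDF(Y_n(s))\to DF(Y(s))$ in $V$ and $\pi_nv_2(s)\to v_2(s)$ in $V'$, one gets $\ip{DF(Y_n(s))}{\pi_nv_2(s)}\to\ip{DF(Y(s))}{v_2(s)}$, dominated by $C\norm{v_2(s)}_{V'}\in L^1(0,T)$. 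Dominated convergence then yields the convergence of the drift integrals, uniformly in $t$. The trace integrands converge $\P\otimes dt$-a.e.\ (as $\pi_nG(s)\to G(s)$ in $\cL^2(U,H)$ and $D^2F(Y_n(s))\to D^2F(Y(s))$ in $\cL(H)$) and are dominated by $\norm{D^2F}_\infty\norm{G(s)}^2_{\cL^2(U,H)}$, so the corresponding integrals converge in $L^1(\Omega\times(0,T))$, hence along a subsequence $\P$-a.s.\ uniformly in $t$. Finally $(\pi_nG(s))^*DF(Y_n(s))=G^*(s)\pi_nDF(Y_n(s))\to G^*(s)DF(Y(s))$ in $L^2(\Omega\times(0,T);U)$ by dominated convergence, so the stochastic integrals converge in $L^2(\Omega;C([0,T]))$. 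Passing to the limit along a common subsequence yields the identity for a.e.\ $t$, and then for every $t$, the right-hand side being $\P$-a.s.\ continuous in $t$.

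The crux of the argument is the drift term carrying the $V'$-valued component $v_2$: here the Gelfand-triple structure is genuinely used, the key being that the projected gradient $\pi_nDF(Y_n(s))$ lies in $V$ — by the regularity $F\in C^1_b(V)$ — so that it pairs with $v_2(s)\in V'$, and that $\pi_n$ converges strongly to the identity simultaneously in $V$, $H$ and $V'$. A second, more routine point: one should first localize by stopping times to reduce to square-integrable data (exploiting the boundedness of $F$, $DF$, $D^2F$), and check that $Y$, a priori only $V'$-continuous and $L^2(0,T;V)$-valued, admits a modification that is continuous — or at least weakly continuous — in $H$, so that $F(Y(\cdot))$ is meaningful pathwise; this belongs to the classical variational theory and is in fact a byproduct of the approximation scheme itself.
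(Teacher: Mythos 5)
The paper does not actually prove Proposition~\ref{prop:Ito_pard}: it is quoted from Pardoux's work, with a reference in place of a proof. So your argument cannot be compared line by line with the paper's; judged on its own, your Galerkin scheme (spectral projections $\pi_n$ associated with the operator of the Gelfand triple, finite-dimensional It\^o formula for $\pi_n Y$, passage to the limit using $\pi_n\to I$ strongly in $V$, $H$, $V'$) is the standard route to this formula and is essentially sound. In particular you correctly identify the one place where the hypothesis $F\in C^1_b(V)$ is really used, namely that $DF$ takes values in $V$, boundedly and continuously, so that $\ip{DF(Y_n)}{\pi_n v_2}=\ip{\pi_n DF(Y_n)}{v_2}$ can be treated as a $V$--$V'$ pairing and dominated by $\norm{v_2(s)}_{V'}\in L^1(0,T)$ pathwise; the treatment of the trace and stochastic-integral terms is also correct, and no localization is in fact needed since $G$ is assumed square integrable and the drift terms are handled pathwise.

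The one genuine soft spot is the endpoint of your argument: the passage from ``the identity holds for a.e.\ $t$'' to ``for every $t\in[0,T]$''. Your scheme gives convergence of the right-hand side for all $t$ (a.s., along a subsequence), but $F(\pi_nY(t))\to F(Y(t))$ is only guaranteed at those $t$ for which $Y(t)\in H$, i.e.\ a.e.\ $t$; under the stated hypotheses ($Y\in L^0(\Omega;L^2(0,T;V))$, no $L^\infty(0,T;H)$ bound) the process is a priori only $V'$-valued at exceptional times, so the statement must be read for the $H$-continuous (or at least weakly continuous) modification of $Y$. You flag this, but your claim that it is ``a byproduct of the approximation scheme itself'' is not substantiated: the natural test functions one would use to extract an $L^\infty(0,T;H)$ bound and $H$-continuity from the scheme, e.g.\ truncations of $\norm{\cdot}^2$ such as $x\mapsto\norm{x}^2/(1+\delta\norm{x}^2)$, are not admissible in your own class, since their gradients are not bounded maps into $V$. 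The honest fix is to import the pathwise $H$-regularity of $Y$ from the classical variational theory (Pardoux, Krylov--Rozovski\u\i{} -- precisely the source the paper cites), and then your a.e.\ identity together with the continuity of the right-hand side upgrades to every $t$. With that point stated as a citation rather than a byproduct, the proof is complete.
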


The previous It\^o formula can be extended to processes satisfying
weaker integrability conditions, in analogy to Proposition \ref{prop:Ito}.
\begin{prop}\label{prop:Ito_pard2}
  Let $Y \in L^0(\Omega;L^2(0,T;V))\cap L^0(\Omega; L^\infty(0,T; H))$
  be such that
  \[
  Y(t)=Y_0 + \int_0^tAv(s)\,ds + \int_0^t g(s)\,ds +  \int_0^tG(s)\,dW(s)
  \]
  for all $t\in[0,T]$, where $Y_0 \in L^0(\Omega, \cF_0, \P; H)$ and
  \[
  v \in L^0(\Omega; L^2(0,T; V)), \qquad g \in L^0(\Omega; L^1(0,T; L^1(D)))
  \]
  are adapted and $G\in L^2(\Omega\times(0,T);\cL^2(U,H))$
  is progressively measurable. Then, for any $F \in C^2_b(H) \cap C^1_b(V') \cap
  C^1_b(L^1(D))$, one has
  \begin{align*}
    F(Y(t))
    &= F(Y_0) + \int_0^t \ip{Av(s)}{DF(Y(s))} \,ds
      + \int_0^t\!\!\int_Dg(s)DF(Y(s))\,ds\\
    & \quad + \int_0^t DF(Y(s)) G(s)\,dW(s)
      + \frac12 \int_0^t \operatorname{Tr}\bigl(%
      G^*(s)D^2F(Y(s))G(s)\bigr)\,ds
  \end{align*}
  for every $t \in [0,T]$, $\P$-a.s..
\end{prop}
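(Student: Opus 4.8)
The plan is to bootstrap from the regular case (Proposition~\ref{prop:Ito_pard}) to the singular case by the same regularization scheme already used in the proof of Proposition~\ref{prop:Ito}, namely smoothing with the resolvent powers $(I+\delta A_1)^{-m}$. First I would apply $(I+\delta A_1)^{-m}$ to the integral equation satisfied by $Y$; since the resolvent commutes with $A$ and maps $L^1(D)$ into $H$, the smoothed process $Y^\delta$ satisfies
\[
Y^\delta(t)=Y_0^\delta + \int_0^t A v^\delta(s)\,ds + \int_0^t g^\delta(s)\,ds + \int_0^t G^\delta(s)\,dW(s),
\]
where now $A v^\delta + g^\delta \in L^2(0,T;V') + L^1(0,T;H)$ with the first term in $L^2(0,T;V')$ and the second in $L^1(0,T;H)$, so that Proposition~\ref{prop:Ito_pard} applies to $Y^\delta$ with the drift $v$ there taken to be $Av^\delta + g^\delta$. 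This yields, for every $\delta>0$,
\begin{align*}
F(Y^\delta(t)) &= F(Y_0^\delta) + \int_0^t \ip{Av^\delta(s)}{DF(Y^\delta(s))}\,ds + \int_0^t\!\!\int_D g^\delta(s)DF(Y^\delta(s))\,ds\\
&\quad + \int_0^t DF(Y^\delta(s))G^\delta(s)\,dW(s) + \frac12\int_0^t \operatorname{Tr}\bigl(G^{\delta *}(s)D^2F(Y^\delta(s))G^\delta(s)\bigr)\,ds.
\end{align*}
Here I am using that $DF(Y^\delta)$ can be paired against $Av^\delta$ through the $V$--$V'$ duality and against $g^\delta$ through the $L^\infty$--$L^1$ duality, which is why the hypothesis $F \in C^2_b(H)\cap C^1_b(V')\cap C^1_b(L^1(D))$ is exactly what is needed.

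Next I would pass to the limit $\delta\to 0$ term by term, exploiting the convergences already recorded in the proof of Proposition~\ref{prop:Ito}: $Y^\delta(t)\to Y(t)$ in $H$ for every $t$, $Y^\delta\to Y$ in $L^2(0,T;V)$, $v^\delta\to v$ in $L^2(0,T;V)$, $g^\delta\to g$ in $L^1(0,T;L^1(D))$, $Y_0^\delta\to Y_0$ in $H$, and $G^\delta\to G$ in $L^2(\Omega;L^2(0,T;\cL^2(U,H)))$; moreover $Y^\delta\to Y$ in $L^1(0,T;L^1(D))$ and, along a subsequence, $Y^\delta(s,x)\to Y(s,x)$ a.e. The boundedness of $F$, $DF$ (in $\cL(H,\erre)$, $\cL(V',\erre)$ and $\cL(L^1(D),\erre)$) and $D^2F$ together with continuity of $F$, $DF$, $D^2F$ lets me handle $F(Y^\delta(t))\to F(Y(t))$, $F(Y_0^\delta)\to F(Y_0)$, the stochastic integral (convergence in probability of the integrand in $\cL^2(U,\erre)$, via dominated convergence, then of the It\^o integral along a subsequence), and the trace term (dominated convergence using $\norm{D^2F}_\infty$ and the $L^2$ convergence of $G^\delta$). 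The duality term $\int_0^t \ip{Av^\delta}{DF(Y^\delta)}\,ds$ converges because $Av^\delta\to Av$ in $L^2(0,T;V')$ while $DF(Y^\delta)\to DF(Y)$ strongly in $V$ (using $Y^\delta\to Y$ in $V$ along a subsequence a.e. in time, continuity of $DF\colon V'\to \erre$... more precisely one wants $DF$ continuous as a map into $V$, which follows from $F\in C^1_b(V')$ by identifying $DF$ with an element of $V''=V$).

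The main obstacle — exactly as in Proposition~\ref{prop:Ito} — is the passage to the limit in the term $\int_0^t\!\!\int_D g^\delta DF(Y^\delta)\,ds$, since $g^\delta\to g$ only in $L^1$ and $DF(Y^\delta)$ is merely bounded in $L^\infty$, so a product of an $L^1$-convergent and an $L^\infty$-bounded sequence need not converge. The remedy is the uniform integrability of $(g^\delta Y^\delta)$ on $(0,T)\times D$ that assumption (vi) guarantees through the inequality $j(r)\le M_1 j(-r)+M_2$ and its consequence for $j^*$: arguing as in the cited proof one shows $(g^\delta)$ is uniformly integrable, and since $DF(Y^\delta)$ is bounded in $L^\infty(\Omega\times(0,T)\times D)$ uniformly in $\delta$ (because $\norm{DF}_{\cL(L^1(D),\erre)}$ is the relevant bound and $F\in C^1_b(L^1(D))$), the product $g^\delta DF(Y^\delta)$ is uniformly integrable on $(0,T)\times D$; together with the a.e. convergence (along a subsequence) $g^\delta DF(Y^\delta)\to g\, DF(Y)$ coming from $g^\delta\to g$ and $Y^\delta\to Y$ a.e. plus continuity of $DF$, Vitali's theorem gives the desired convergence. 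Finally, since the limiting identity holds for $\P$-a.e.\ $\omega$ and for each fixed $t$, and both sides are (up to the stochastic integral, which has a continuous modification) continuous in $t$, one upgrades it to hold simultaneously for all $t\in[0,T]$, $\P$-a.s., as claimed.
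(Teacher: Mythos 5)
Your proposal follows essentially the same route as the paper: smooth with $(I+\delta A_1)^{-m}$, apply Proposition~\ref{prop:Ito_pard} to $Y^\delta$ (whose drift splits as $Av^\delta \in L^2(0,T;V')$ plus $g^\delta \in L^1(0,T;H)$), and pass to the limit term by term; your handling of $F(Y^\delta(t))$, $F(Y_0^\delta)$, the stochastic integral and the trace term matches the paper's. Two remarks. First, for the term $\int_0^t\ip{Av^\delta(s)}{DF(Y^\delta(s))}\,ds$ only \emph{weak} convergence of $v^\delta$ in $L^2(0,T;V)$ (hence of $Av^\delta$ in $L^2(0,T;V')$) is actually justified --- it comes from the uniform bound $\norm{v^\delta}_V \lesssim \norm{v}_V$ obtained by testing $v^\delta+\delta Av^\delta = v$ against $Av^\delta$ and using coercivity --- not the strong convergence you assert; this is harmless, since pairing the weak convergence with the strong convergence $DF(Y^\delta)\to DF(Y)$ in $L^2(0,T;V)$ is exactly what the paper does. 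Second, and more substantively: your identification of the $g$-term as ``the main obstacle,'' to be treated via assumption (vi) and the uniform integrability of $(g^\delta Y^\delta)$ as in Proposition~\ref{prop:Ito}, is off target. The present proposition contains no hypothesis of the form $j(\alpha Y)+j^*(\alpha g)\in L^1$; here $g$ is merely in $L^1(0,T;L^1(D))$, so the $j$/$j^*$ machinery is neither available nor needed. The term is easy precisely because of the hypothesis $F\in C^1_b(L^1(D))$, which gives $\sup_x\norm{DF(x)}_{L^\infty(D)}<\infty$ (in Proposition~\ref{prop:Ito} the ``test function'' is $Y$ itself, which is why the symmetry assumption is needed there). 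The uniform integrability you invoke is in fact automatic, since $g^\delta\to g$ strongly in $L^1((0,T)\times D)$ (the resolvent powers are contractions on $L^1(D)$ and converge strongly); alternatively, as in the paper, one uses a.e.\ convergence along a subsequence together with dominated convergence, the dominating function being $\norm{DF}_{C_b(H,L^\infty(D))}\norm{g(s)}_{L^1(D)}$. With that justification corrected, your argument is complete and coincides with the paper's proof.
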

\begin{proof}
  Since the resolvent of $A_1$ is ultracontractive by assumption,
  there exists $m \in \enne$ such that
  \[  
  (I+\delta A_1)^{-m}: L^1(D) \to H \qquad\forall \delta>0.
  \]
  Using a superscript $\delta$ to denote the action of $(I+\delta
  A_1)^{-m}$, we have
  \[
  Y^\delta(t) = Y^\delta_0 + \int_0^tAv^\delta(s)\,ds + \int_0^t
  g^\delta(s)\,ds + \int_0^tG^\delta(s)\,dW(s) \qquad\forall
  t\in[0,T],
  \]
  where $Av^\delta+g^\delta \in L^0(\Omega; L^1(0,T; H)) \oplus
  L^0(\Omega; L^2(0,T; V'))$. Hence, by
  Proposition~\ref{prop:Ito_pard}, for every $\delta>0$ we have
 \begin{align*}
   F(Y^\delta(t))
   &= F(Y^\delta_0) + \int_0^t \ip{Av^\delta(s)}{DF(Y^\delta(s))} \,ds + \int_0^t\!\!\int_Dg^\delta(s)DF(Y^\delta(s))\,ds\\
   & \quad + \int_0^t DF(Y^\delta(s)) G^\delta(s)\,dW(s) + \frac12
   \int_0^t \operatorname{Tr}\bigl(%
   (G^\delta)^*(s)D^2F(Y^\delta(s))G^\delta(s)\bigr)\,ds
  \end{align*}
  for every $t \in [0,T]$, $\P$-almost surely. Let us pass to the
  limit as $\delta \to 0$ in the previous equation.  It is clear from
  the fact that $Y(t), Y_0 \in H$ and the continuity of $F$ that
  \[
    F(Y^\delta(t)) \to F(Y(t)), \qquad F(Y_0^\delta) \to F(Y_0).
  \] 
  Moreover, since $v^\delta+\delta Av^\delta=v$ in $V$, taking the
  duality pairing with $Av^\delta \in V'$, we have
  \[
  \ip{Av^\delta}{v^\delta} + \delta\norm{Av^\delta}^2 = \ip{Av^\delta}{v}\leq
  \norm{A}_{\cL(V,V')}\norm{v^\delta}_V\norm{v}_V,
  \]
  from which, by coercivity of $A$,
  \[
  \norm{v^\delta}_V\leq \frac{\norm{A}_{\cL(V,V')}}{C}\norm{v}_V
  \qquad \forall \delta>0.
  \]
  Taking into account that $v \in L^2(0,T; V)$, we deduce that
  $v^\delta \to v$ weakly in $L^2(0,T; V)$. Since $Y^\delta \to Y$ in
  $L^2(0,T; H)$, by continuity of $A$ and the fact that $DF\in
  C_b(H,V)$, we have $Av^\delta \to Av$ weakly in $L^2(0,T; V')$ and
  $DF(Y^\delta)\to DF(Y)$ in $L^2(0,T; V)$, hence
  \[
  \int_0^t \ip{Av^\delta(s)}{DF(Y^\delta(s))} \,ds \longrightarrow
  \int_0^t \ip{Av(s)}{DF(Y(s))} \,ds.
  \]
  Furthermore, since $Y^\delta(t) \to Y(t)$ in $H$ for every
  $t\in[0,T]$, recalling that $DF \in C_b(H,L^\infty(D))$ and
  $g^\delta \to g$ in $L^1(0,T; L^1(D))$, we have (possibly along a
  subsequence)
  \[
  \int_D g^\delta(s)DF(Y^\delta(s)) \longrightarrow \int_D g(s)DF(Y(s))
  \qquad\text{for a.e.~} s \in (0,T).
  \]
  Taking into account that $\int_Dg^\delta DF(Y^\delta)\leq
  \norm{DF}_{C_b(H,L^\infty(D))}\norm{g}_{L^1(D)}\in L^1(0,T)$, by the
  dominated convergence theorem we then have
  \[
  \int_0^t\!\!\int_Dg^\delta(s)DF(Y^\delta(s))\,ds \longrightarrow
  \int_0^t\!\!\int_Dg(s)DF(Y^\delta(s))\,ds.
  \]
  Moreover, since $Y^\delta(t)\to Y(t)$ in $H$ for every $t\in[0,T]$,
  recalling that $D^2F \in C(H,\cL(H))$ and $G^\delta \to G$ in
  $L^2(\Omega;L^2(0,T;\cL^2(U,H)))$, we have (possibly along a
  subsequence)
  \[
  \operatorname{Tr}\bigl(%
      (G^\delta)^*(s)D^2F(Y^\delta(s))G^\delta(s)\bigr) \to
      \operatorname{Tr}\bigl(%
      G^*(s)D^2F(Y(s))G(s)\bigr) \qquad\text{for a.e.~}s\in(0,T).
  \]
  Since $\operatorname{Tr}\bigl(
  (G^\delta)^*D^2F(Y^\delta)G^\delta\bigr)\leq
  \norm{D^2F}_{C(H,\cL(H))}\norm{G}^2_{\cL^2(U,H)}\in L^1(0,T)$,
  the dominated convergence theorem yields
  \[
  \int_0^t \operatorname{Tr}\bigl(%
      (G^\delta)^*(s)D^2F(Y^\delta(s))G^\delta(s)\bigr)\,ds
  \longrightarrow
  \int_0^t \operatorname{Tr}\bigl(%
     G^*(s)D^2F(Y(s))G(s)\bigr)\,ds.
  \]
  Finally, by the Davis inequality and the ideal property of
  Hilbert-Schmidt operators, we have
  \begin{align*}
    &\E\sup_{t\in[0,T]} \biggl| \int_0^t DF(Y^\delta(s)) G^\delta(s)\,dW(s)
      - \int_0^t DF(Y(s)) G(s)\,dW(s) \biggr|\\
    &\quad \lesssim
      \E\left(\int_0^T\norm[\big]{DF(Y^\delta(s)) G^\delta(s) -
      DF(Y(s)) G(s)}^2_{\cL(U,\erre)}\,ds\right)^{1/2}\\
    &\quad \lesssim\E\left(\int_0^T\norm{DF(Y^\delta(s))}^2
      \norm{G^\delta(s)-G(s)}^2_{\cL^2(U,H)}\,ds\right)^{1/2}\\
    &\qquad\qquad + \E\left(\int_0^T\norm{DF(Y^\delta(s))-DF(Y(s))}^2
      \norm{G^\delta(s)}_{\cL^2(U,H)}^2\,ds\right)^{1/2}\\
    &\quad \leq\norm{DF}_{C(H,H)}
      \norm{G^\delta-G}_{L^2(\Omega;L^2(0,T;\cL^2(U,H)))}\\
    &\qquad\qquad + \E\left(\int_0^T\norm{G(s)}^2_{\cL^2(U,H)}
      \norm{DF(Y^\delta(s))-DF(Y(s))}^2 \,ds\right)^{1/2},
  \end{align*}
  where the first term on the right-hand side converges to $0$ because
  \[
    G^\delta \to G \qquad \text{in } L^2(\Omega; L^2(0,T; \cL^2(U,H))).
  \]
  Similarly,
  since $DF(Y^\delta)\to DF(Y)$ a.e., it follows by the dominated
  convergence theorem that the second term on the right-hand side
  converges to zero as well. Therefore, passing to subsequence if
  necessary, one has
  \[
  \int_0^t DF(Y^\delta(s)) G^\delta(s)\,dW(s) \longrightarrow
  \int_0^t DF(Y(s)) G(s)\,dW(s).
  \qedhere
  \]
\end{proof}

\subsection{Differentiability with respect to the initial datum for
  solutions to equations in variational form}
\label{ssec:diff}
Let $g \in C^2_b(\erre)$ and consider the equation
\[
dX + AX\,dt = g(X)\,dt + G\,dW, \qquad X(0)=x,
\]
in the variational sense, where $A$ satisfies the hypotheses of
Section~\ref{sec:ass}, $G \in \cL^2(U,H)$, and $x \in H$.

For compactness of notation we shall write $E$ in place of
$C([0,T];H) \cap L^2(0,T;V)$. The above equation admits a unique
variational solution $X^x \in L^2(\Omega;E)$. Here and in the
following we often use superscripts to denote the dependence on the
initial datum. We are going to provide sufficient conditions ensuring
that the solution map $x \mapsto X^x$ belongs to
$C^2_b(H;L^2(\Omega;E))$. The problem of regular dependence on the
initial datum for equations in the variational setting does not seem
to be addressed in the literature. On the other hand, several results
are available for mild solutions (see, e.g.,
\cite{cerrai-libro,DZ96,cm:JFA10}), where an approach via the implicit
function theorem depending on a parameter is adopted. Here we proceed
in a more direct and, we believe, clearer way. The results are
non-trivial (and probably not easily accessible via the implicit
function theorem) in the sense that the solution map is Fr\'echet
differentiable even though, as is well known, the superposition
operator associated to $g$ is \emph{never} Fr\'echet differentiable
unless $g$ is affine.
The first and second Fr\'echet derivative of the solution map shall be
denoted by $DX$ and $D^2X$, respectively. These are maps with domain
$H$ and codomain $\cL(H,L^2(\Omega;E))$ and $\cL_2(H;L^2(\Omega;E))$,
respectively. Here and in the following we denote the space of
continuous bilinear mappings from $H \times H$ to a Banach space $F$
by $\cL_2(H;F)$.

We begin with first-order differentiability.
\begin{thm}   \label{thm:d1}
  The solution map $x \mapsto X^x: H \to L^2(\Omega;E)$ is
  continuously (Fr\'echet) differentiable with bounded derivative.
  Moreover, for any $h \in H$, setting $Y_h := (DX)h$, one has
  \begin{equation}   \label{eq:dg}
  Y_h' + AY_h = g'(X^x)Y_h, \qquad Y_h(0)=h,
  \end{equation}
  in the variational sense.
\end{thm}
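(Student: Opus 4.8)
The plan is to proceed by a direct approximation-and-limit argument rather than via an implicit function theorem. First I would fix the candidate derivative: given $h \in H$, the linear equation \eqref{eq:dg} has coefficient $g'(X^x) \in L^\infty(\Omega\times(0,T)\times D)$ (since $g \in C^2_b$), so by the classical linear variational theory (Krylov--Rozovski{\u\i}/Pardoux, with coercivity of $A$ from assumption (i)) it admits a unique solution $Y_h \in L^2(\Omega;E)$, and the map $h \mapsto Y_h$ is linear and bounded from $H$ to $L^2(\Omega;E)$, with norm controlled uniformly in $x$ by a Gronwall estimate using $\norm{g'}_\infty$. This defines a candidate $DX(x) \in \cL(H,L^2(\Omega;E))$.

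Next I would show this candidate is genuinely the Fr\'echet derivative. Set $Z := X^{x+h} - X^x - Y_h$. Subtracting the three equations, $Z$ solves, in the variational sense,
\[
  Z' + AZ = g(X^{x+h}) - g(X^x) - g'(X^x)Y_h, \qquad Z(0)=0.
\]
I would rewrite the right-hand side as $g'(X^x)Z + \rho$, where
\[
  \rho := \bigl(g(X^{x+h})-g(X^x)-g'(X^x)(X^{x+h}-X^x)\bigr)
\]
is the Taylor remainder of $g$. Applying It\^o's formula for $\norm{Z(t)}^2$ (Proposition~\ref{prop:Ito}, whose hypotheses hold here since all terms are in the regular classes — no $\beta$ is present), taking expectations, using coercivity of $A$ and $\norm{g'}_\infty<\infty$ to absorb the $g'(X^x)Z$ term, and Gronwall, one gets
\[
  \norm{Z}_{L^2(\Omega;E)}^2 \lesssim \E\int_0^T \norm{\rho(s)}^2\,ds.
\]
It then remains to prove $\E\int_0^T\norm{\rho(s)}^2\,ds = o(\norm{h}^2)$ as $h \to 0$ in $H$. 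Since $g\in C^2_b$, pointwise $|\rho| \le \tfrac12\norm{g''}_\infty |X^{x+h}-X^x|^2$, and also $|\rho| \le \omega(|X^{x+h}-X^x|)\,|X^{x+h}-X^x|$ with $\omega$ a bounded modulus of continuity of $g'$; combined with the Lipschitz bound $\norm{X^{x+h}-X^x}_{L^2(\Omega;E)} \lesssim \norm{h}$ from Theorem~\ref{thm:WP} (or the remark that the solution map is Lipschitz), a dominated-convergence / uniform-integrability argument on $\Omega\times(0,T)\times D$ yields the desired $o(\norm{h}^2)$. This gives Fr\'echet differentiability at each $x$ with derivative $Y_\cdot = DX(x)$.

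Finally I would address continuity of $x \mapsto DX(x)$ in $\cL(H,L^2(\Omega;E))$. Fix $h$ with $\norm{h}\le 1$ and compare $Y_h^x$ (solving \eqref{eq:dg} with coefficient $g'(X^x)$) with $Y_h^{\bar x}$; the difference $W := Y_h^x - Y_h^{\bar x}$ solves $W' + AW = g'(X^x)W + (g'(X^x)-g'(X^{\bar x}))Y_h^{\bar x}$, $W(0)=0$. Again It\^o for $\norm{W}^2$, coercivity, $\norm{g'}_\infty$, and Gronwall reduce matters to showing
\[
  \sup_{\norm{h}\le 1} \E\int_0^T \norm[\big]{(g'(X^x)-g'(X^{\bar x}))Y_h^{\bar x}(s)}^2\,ds \longrightarrow 0
  \quad\text{as } \bar x \to x,
\]
which follows from $X^{\bar x}\to X^x$ in $L^2(\Omega;L^2(0,T;H))$, continuity and boundedness of $g'$, the uniform bound $\sup_{\norm{h}\le1}\norm{Y_h^{\bar x}}_{L^2(\Omega;E)}<\infty$, and a uniform integrability argument. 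I expect the main obstacle to be precisely these two uniform-integrability passages: one must handle the superposition operator $g'(\cdot)$, which is continuous but not uniformly continuous on $H$-valued functions, and extract the required limits uniformly over the unit ball of $h$ — this is where the boundedness of $g$ and its derivatives, together with the a priori $L^2(\Omega;E)$-bounds, are essential, and where care is needed because (as the authors stress) the superposition operator is not Fr\'echet differentiable.
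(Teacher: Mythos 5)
There is a genuine gap at the central step of your argument: the claim that
\[
  \E\int_0^T \norm{\rho(s)}^2\,ds = o\bigl(\norm{h}^2\bigr), \qquad
  \rho = g(X^{x+h})-g(X^x)-g'(X^x)\bigl(X^{x+h}-X^x\bigr),
\]
as $h\to 0$ in $H$. This is precisely the statement that the superposition operator of $g$ is Fr\'echet differentiable along the increments $D_h:=X^{x+h}-X^x$, and it is exactly the obstacle the paper points out (the Nemytskii operator of $g$ is never Fr\'echet differentiable on $L^2$ unless $g$ is affine). Neither of your two bounds closes it. The quadratic bound $\abs{\rho}\leq\tfrac12\norm{g''}_\infty\abs{D_h}^2$ leads to $\E\int_0^T\norm{D_h(s)}_{L^4(D)}^4\,ds$, i.e.\ fourth moments of the increment in $L^4$ in space--time, which are not provided by Theorem~\ref{thm:WP} (the solution map is Lipschitz only into $L^2(\Omega;E)$, giving second moments). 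The modulus-of-continuity bound $\abs{\rho}\leq\omega(\abs{D_h})\abs{D_h}$ cannot be combined with dominated convergence: as $h\to 0$ in $H$ (over all directions and amplitudes at once) there is no dominating function for $\abs{D_h}^2/\norm{h}^2$ and no pointwise smallness of $\omega(\abs{D_h})$ on the sets where $\abs{D_h}^2/\norm{h}^2$ may concentrate -- mass of size $\norm{h}^2$ sitting on small sets with $O(1)$ amplitude is compatible with all the available estimates and would make $\E\int\norm{\rho}^2$ of order $\norm{h}^2$, not $o(\norm{h}^2)$. So the "uniform-integrability passage" you flag as the main obstacle is not a technicality; it is where the direct approach breaks down.

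The paper avoids this by never estimating the full Fr\'echet remainder. It first proves \emph{G\^ateaux} differentiability: for fixed $h$ one studies $z_\eps=\eps^{-1}(X^{x+\eps h}-X^x)-Y_h$, splits the nonlinearity into a term $S_\eps$ controlled by $\norm{g}_{\lip}\norm{z_\eps}$ (absorbed by Gronwall) and a term $R_\eps=\eps^{-1}(g(X+\eps Y_h)-g(X))-g'(X)Y_h$, which is dominated by $\norm{g}_{\lip}\abs{Y_h}$ \emph{independently of} $\eps$ -- so dominated convergence applies along the fixed direction $h$, which is exactly what fails in your setting. Then it shows $x\mapsto Y^x$ is continuous from $H$ to $\cL(H,L^2(\Omega;E))$ (using the pathwise, nonrandom bound $\norm{Y_h}_E\lesssim\norm{h}$ to pull $\sup_{\norm{h}\leq1}$ inside the expectation, and the Lipschitz continuity of $g'$), and invokes the standard criterion that G\^ateaux differentiability plus continuity of the G\^ateaux derivative implies continuous Fr\'echet differentiability. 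Your first step (construction and uniform boundedness of $Y_h$) and your last step (continuity in $x$ of the candidate derivative) are essentially the paper's; to repair the proof, drop the direct remainder estimate and insert the directional-derivative argument, then conclude via the G\^ateaux-to-Fr\'echet criterion.
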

\begin{proof}
  Classical (deterministic) results imply that \eqref{eq:dg} admits a
  unique solution $Y_h \in E$ for $\P$-a.e. $\omega \in \Omega$. Since
  $X^x$ is an adapted process and $h$ is non-random, it follows that
  $Y_h$ is itself adapted. Alternatively, and more directly, one can
  apply the stochastic variational theory to \eqref{eq:dg}, deducing
  that $Y_h \in L^2(\Omega;E)$ is adapted.
  Let us set, for compactness of notation,
  \[
    X_\varepsilon := X^{x + \varepsilon h}, \qquad
    z_\varepsilon := \frac{1}{\varepsilon}(X_\varepsilon - X) - Y_h,
  \]
  where $\varepsilon$ is an arbitrary real number. Elementary
  calculations show that
  \[
    z_\varepsilon(t) + \int_0^t Az_\varepsilon(s)\,ds
    = \int_0^t \Bigl( \frac{1}{\varepsilon}
    \bigl( g(X_\varepsilon(s)) - g(X(s)) \bigr) - g'(X(s))Y_h(s) \Bigr)\,ds.
  \]
  Writing
  \[
    g(X_\varepsilon) - g(X) = g(X+\varepsilon Y_h) - g(X)
    + g(X_\varepsilon) - g(X+\varepsilon Y_h)
  \]
  yields
  \begin{align*}
    \frac{1}{\varepsilon} \bigl( g(X_\varepsilon) - g(X) \bigr)
    - g'(X)Y_h
    &= \frac{1}{\varepsilon} \bigl( g(X+\varepsilon Y_h) - g(X) \bigr)
    - g'(X)Y_h\\
    &\quad + \frac{1}{\varepsilon} \bigl(%
      g(X_\varepsilon) - g(X+\varepsilon Y_h) \bigr)\\
    &=: R_\varepsilon + S_\varepsilon.
  \end{align*}
  By the integration-by-parts formula applied to the equation for
  $z_\varepsilon$ we get
  \[
    \frac12 \norm{z_\varepsilon(t)}^2
    + \int_0^t \ip{Az_\varepsilon(s)}{z_\varepsilon(s)}\,ds =
    \int_0^t \ip{R_\varepsilon(s)}{z_\varepsilon(s)}\,ds
    + \int_0^t \ip{S_\varepsilon(s)}{z_\varepsilon(s)}\,ds,
  \]
  where
  $\ip{S_\varepsilon}{z_\varepsilon} \leq \norm{S_\varepsilon}
  \norm{z_\varepsilon}$ and, by the Lipschitz continuity of $g$,
  \[
    \norm{S_\varepsilon} \leq \norm{g}_{\lip} \, \frac{1}{\varepsilon}
    \norm[\big]{X_\varepsilon-X-\varepsilon Y_h}
    = \norm{g}_{\lip} \norm{z_\varepsilon},
  \]
  so that
  $\ip{S_\varepsilon}{z_\varepsilon} \leq \norm{g}_{\lip}
  \norm{z_\varepsilon}^2$. Since
  $\ip{R_\varepsilon}{z_\varepsilon} \leq \bigl(
  \norm{R_\varepsilon}^2 + \norm{z_\varepsilon}^2 \bigr)/2$, we are
  left with
  \[
    \frac12 \norm{z_\varepsilon(t)}^2 + \int_0^t
    \ip{Az_\varepsilon(s)}{z_\varepsilon(s)}\,ds \leq
    \bigl( 1/2 + \norm{g}_{\lip} \bigr) \int_0^t \norm{z_\varepsilon(s)}^2\,ds
    + \frac12 \int_0^t \norm{R_\varepsilon(s)}^2\,ds.
  \]
  For an arbitrary $t > 0$ one has, by the coercivity of $A$,
  \[
    \frac12 \norm{z_\varepsilon}^2_{C([0,t];H)} 
    + C\int_0^t \norm{z_\varepsilon(s)}_V^2\,ds
    \leq \bigl( 1 + 2\norm{g}_{\lip} \bigr)
         \int_0^{t} \norm{z_\varepsilon}^2_{C([0,s];H)} \,ds
    + \int_0^{t} \norm{R_\varepsilon(s)}^2\,ds,
  \]
  hence also, by Fubini's theorem and Gronwall's inequality,
  \[
    \E\norm{z_\varepsilon}^2_{C([0,T];H)} \leq
    e^{(2 + 4\norm{g}_{\lip})T}
    \E\int_0^T \norm{R_\varepsilon(s)}^2\,ds.
  \]
  It is clear from the hypotheses on $g$ and the definition of
  $R_\varepsilon$ that $R_\varepsilon \to 0$ in
  $L^0(\Omega \times [0,T] \times D)$ as $\varepsilon \to 0$ for every
  $s \in [0,T]$. Moreover, it follows by the Lipschitz continuity of
  $g$ and elementary estimates that
  $\abs{R_\varepsilon} \lesssim \norm{g}_{\lip} \abs{Y_h}$, where
  $Y_h \in L^2(\Omega \times [0,T] \times D)$. The dominated convergence
  theorem thus yields
  \[
  \lim_{\varepsilon \to 0} \E\int_0^T \norm{R_\varepsilon(s)}^2\,ds = 0.
  \]
  Since
  \[
    C \E\int_0^T \norm{z_\varepsilon(s)}^2_V\,ds \leq
    \bigl( 1 + 2\norm{g}_{\lip} \bigr) T \E\norm{z_\varepsilon}^2_{C([0,T];H)}
    + \E\int_0^T \norm{R_\varepsilon(s)}^2\,ds,
  \]
  we conclude that
  \[
    \lim_{\varepsilon \to 0}
    \norm[\big]{z_\varepsilon}_{L^2(\Omega;E)} = 0.
  \]
  This proves that the solution map is differentiable in every
  direction of $H$, and that its directional derivative in the
  direction $h \in H$ is given by the (unique) solution $Y_h$ to
  \eqref{eq:dg}. It is then clear that the map $h \mapsto Y_h$ is
  linear. Let us prove that it is also continuous: in analogy to
  computations already carried out above, the integration-by-parts
  formula yields
  \[
    \frac12 \norm{Y_h(t)}^2 + \int_0^t \ip{AY_h(s)}{Y_h(s)}\,ds
    = \norm{h}^2 + \int_0^t \ip{g'(X^x(s))Y_h(s)}{Y_h(s)}\,ds,
  \]
  from which one infers
  \[
    \norm[\big]{Y_h}^2_{C([0,t];H)} + \norm[\big]{Y_h}^2_{L^2(0,t;V)}
    \lesssim \norm{h}^2 + \int_0^t \norm[\big]{Y_h}^2_{C([0,s];H)}\,ds,
  \]
  hence also, by Gronwall's inequality and elementary estimates,
  \[
    \norm[\big]{Y_h}_E \lesssim \norm{h}.
  \]
  It is important to note that this inequality holds $\P$-a.s. with a
  non-random implicit constant that depends only on $T$ and on the
  Lipschitz constant of $g$, but not on the initial datum $x$. From
  this it follows that
  \[
  \norm[\big]{Y_h}_{L^p(\Omega;E)} \lesssim_T \norm{h}
  \qquad \forall p \geq 0,
  \]
  hence, in particular, that $h \mapsto Y_h$ is the G\^ateaux
  derivative of $x \mapsto X^x$.
  Setting $Y^x:=h \mapsto Y_h$, we are going to prove that the map
  \begin{align*}
    H &\longrightarrow \cL(H,L^2(\Omega;E))\\
    x &\longmapsto Y^x
  \end{align*}
  is continuous. This implies, by a well-known criterion (see, e.g.,
  \cite[Theorem~1.9]{AmbPro}), that $x \mapsto X^x$ is Fr\'echet
  differentiable with Fr\'echet derivative (necessarily) equal to
  $Y^x$. Let $(x_n) \subset H$ be a sequence converging to $x$ in $H$,
  and write for simplicity $X^n:=X^{x_n}$, $Y^n:=Y^{x_n}$, $X:=X^x$, and
  $Y:=Y^x$, with a subscript $h$ to denote their action on a fixed
  element $h \in H$.  One has
  \[
    Y_h^n(t) - Y_h(t) + \int_0^t A(Y_h^n(s)-Y_h(s))\,ds
    = x_n - x + \int_0^t \bigl( g'(X^n)Y_h^n - g'(X)Y_h \bigr)(s)\,ds,
  \]
  for which the integration-by-parts formula yields
  \begin{align*}
    &\frac12 \norm[\big]{Y_h^n(t) - Y_h(t)}^2
      + C \int_0^t \norm[\big]{Y_h^n(s) - Y_h(s)}_V^2\,ds\\
    &\hspace{5em} \leq \frac12 \norm{x^n - x}^2
      + \int_0^t \ip[\big]{g'(X^n)Y_h^n - g'(X)Y_h}{Y_h^n-Y_h}(s)\,ds,
  \end{align*}
  where
  \begin{align*}
  \ip[\big]{g'(X^n)Y_h^n - g'(X)Y_h}{Y_h^n-Y_h}
  &= \ip[\big]{g'(X_n)(Y_h^n-Y_h)}{Y_h^n-Y_h}\\
  &\quad + \ip[\big]{(g'(X^n)-g'(X))Y_h}{Y_h^n-Y_h},
  \end{align*}
  so that, by elementary estimates,
  \begin{align*}
    & \norm[\big]{Y_h^n(t) - Y_h(t)}^2
    + 2C \int_0^t \norm[\big]{Y_h^n(s) - Y_h(s)}_V^2\,ds\\
    &\hspace{5em} \leq \norm{x_n - x}^2 
    + \bigl( 2\norm{g}_{\lip} + 1 \bigr)
      \int_0^t \norm[\big]{Y_h^n(s) - Y_h(s)}^2\,ds\\
    &\hspace{5em} \quad + \int_0^t \norm[\big]{%
    \bigl(g'(X^n(s)) - g'(X(s))\bigr)Y_h(s)}^2\,ds.
  \end{align*}
  Taking the supremum in time, Gronwall's inequality implies
  \[
  \norm[\big]{Y^n_h - Y_h}_E \lesssim
  \norm{x_n-x} + \norm[\big]{(g'(X^n)-g'(X))Y_h}_{L^2(0,T;H)},
  \]
  where the implicit constant depends on $C$, $T$ and on the Lipschitz
  constant of $g$. Furthermore, since, as observed above, $h \mapsto
  Y_h$ is a linear bounded map from $H$ to $C([0,T];H)$ $\P$-a.s. with
  non-random operator norm, i.e.
  \[
  \sup_{\norm{h}\leq 1} \norm{Y_h}_{C(0,T];H)}
  \lesssim_{T,g} 1,
  \]
  one has
  \[
  \E \sup_{\norm{h}\leq 1}
  \norm[\big]{(g'(X^n)-g'(X))Y_h}_{L^2(0,T;H)} \lesssim
  \E\norm[\big]{g'(X^n)-g'(X)}_{C(0,T];H)},
  \]
  and the last term converges to zero as $n \to \infty$ by the
  dominated convergence theorem, because $X^n \to X$ in
  $L^2(\Omega;C([0,T];H))$ and $g \in C^2_b$ (in particular, $g'$ is
  Lipschitz-continuous). It immediately follows that $x \mapsto Y^x$
  is a continuous map on $H$ with values in $\cL(H,L^2(\Omega;E))$.
  Furthermore, since we have shown that $\norm{Y^x_h}_{L^p(\Omega;E)}
  \lesssim \norm{h}$ for all $p \geq 0$ with a constant independent of
  $x$, we conclude that $x \mapsto X^x$ is of class $C^1_b$ from $H$
  to $L^2(\Omega;E)$.
\end{proof}

To establish the second-order Fr\'echet differentiability of
$x \mapsto X^x$, it is convenient to consider the equation
\begin{equation}
  \label{eq:d2g}
  Z'_{hk} + AZ_{hk} = g'(X)Z_{hk} + g''(X)Y_hY_k, \qquad Z_{hk}(0)=0,
\end{equation}
where $h,k\in H$ and $Y_h, Y_k$ are the solutions to \eqref{eq:dg}
with initial conditions $h$ and $k$, respectively. This is manifestly
the equation formally satisfied by the second-order Fr\'echet
derivative of $x \mapsto X^x$ evaluated at $(h,k)$.

In order to prove that \eqref{eq:d2g} is well-posed, we need the
following lemma, which is probably well known, but for which we could
not find a reference, except for the classical case where
$f \in L^2(0,T;V')$ (see, e.g., \cite{LiMa1}).
\begin{lemma}   \label{lm:sola}
  Let $y_0\in H$, $f\in L^1(0,T; H)$, and
  $\ell\in L^\infty((0,T)\times D)$. Then there exists a unique
  \[
  y\in C([0,T]; H)\cap L^2(0,T; V)
  \]
  such that
  \[
    y(t) +\int_0^t Ay(s)\,ds = y_0 +\int_0^t\ell(s)y(s)\,ds +
    \int_0^tf(s)\,ds \qquad \forall t \in [0,T].
  \]
  Moreover, one has
  \[
    \frac12\norm{y(t)}^2 + \int_0^t\ip{Ay(s)}{y(s)}\,ds =
    \frac12\norm{y_0}^2 + \int_0^t\!\!\int_D\ell(s)|y(s)|^2\,ds +
    \int_0^t\ip{f(s)}{y(s)}\,ds \qquad\forall t \in [0,T].
  \]
\end{lemma}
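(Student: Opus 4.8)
The plan is to prove existence by a standard approximation-and-a-priori-estimate scheme, and uniqueness together with the energy identity by exploiting the linearity of the equation in $y$.

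\textbf{Existence.} First I would regularise the forcing: pick $f_n \in L^2(0,T;V')$ (indeed, we may take $f_n \in L^2(0,T;H)$) with $f_n \to f$ in $L^1(0,T;H)$, for instance by truncation or mollification in time. For each $n$, the classical linear theory (e.g.\ \cite{LiMa1}) applied to the operator $v \mapsto Av - \ell(t)v$, which is still coercive up to a bounded lower-order perturbation (since $\ell \in L^\infty$, one has $\ip{Av-\ell v}{v} \ge C\norm{v}_V^2 - \norm{\ell}_{L^\infty}\norm{v}^2$), yields a unique $y_n \in C([0,T];H)\cap L^2(0,T;V)$ solving the equation with datum $f_n$, and for this solution the energy identity holds by the classical It\^o/integration-by-parts formula in the Gelfand triple. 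Next I would derive a priori bounds uniform in $n$: from the energy identity, coercivity of $A$, and the bound $\int_0^t\!\int_D \ell|y_n|^2 \le \norm{\ell}_{L^\infty}\int_0^t\norm{y_n(s)}^2\,ds$, together with $\int_0^t\ip{f_n(s)}{y_n(s)}\,ds \le \int_0^t \norm{f_n(s)}\,\norm{y_n(s)}\,ds \le \norm{f_n}_{L^1(0,T;H)}\norm{y_n}_{C([0,t];H)}$, a Young-inequality splitting of this last term followed by Gronwall's lemma gives
\[
\norm{y_n}_{C([0,T];H)}^2 + \norm{y_n}_{L^2(0,T;V)}^2 \lesssim \norm{y_0}^2 + \norm{f_n}_{L^1(0,T;H)}^2,
\]
with a constant depending only on $C$, $\norm{\ell}_{L^\infty}$, and $T$. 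Since $(f_n)$ is Cauchy in $L^1(0,T;H)$, the same Gronwall argument applied to the difference $y_n - y_m$ (which solves the analogous equation with datum $f_n - f_m$ and zero initial condition) shows $(y_n)$ is Cauchy in $C([0,T];H)\cap L^2(0,T;V)$; let $y$ be its limit. Passing to the limit in the integral equation is then routine: $\int_0^t Ay_n \to \int_0^t Ay$ in $V'$ by continuity of $A$, $\int_0^t \ell y_n \to \int_0^t \ell y$ in $H$ by the $L^2$-convergence and boundedness of $\ell$, and $\int_0^t f_n \to \int_0^t f$ in $H$ by $L^1$-convergence. Hence $y$ solves the equation.

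\textbf{Energy identity and uniqueness.} The energy identity for the limit $y$ follows by passing to the limit in the energy identity for $y_n$: each term converges thanks to the strong convergences just established (for the term $\int_0^t\ip{f_n}{y_n}\,ds$ one writes $\ip{f_n}{y_n} - \ip{f}{y} = \ip{f_n - f}{y_n} + \ip{f}{y_n - y}$ and uses that $y_n$ is bounded in $C([0,T];H)$ while $f \in L^1(0,T;H)$). Alternatively, and perhaps more cleanly, one can invoke Proposition~\ref{prop:Ito_pard2} (or the classical integration-by-parts lemma in the Gelfand triple) directly for $y$, since $Ay \in L^2(0,T;V')$ and $\ell y + f \in L^1(0,T;H)$. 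For uniqueness, if $y_1,y_2$ are two solutions, their difference $w := y_1 - y_2$ satisfies the same equation with $y_0 = 0$ and $f = 0$; the energy identity for $w$ gives $\tfrac12\norm{w(t)}^2 + C\int_0^t\norm{w(s)}_V^2\,ds \le \norm{\ell}_{L^\infty}\int_0^t\norm{w(s)}^2\,ds$, so Gronwall forces $w \equiv 0$.

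The main obstacle is a mild but genuine one: since $f$ lies only in $L^1(0,T;H)$ and not in $L^2(0,T;V')$, the standard variational well-posedness theorem does not apply off the shelf, so one must go through the $L^2$-in-time regularisation and, crucially, obtain a priori estimates in which the $f$-contribution is controlled in the $C([0,T];H)$-norm of the solution (via $\int\ip{f}{y} \le \norm{f}_{L^1(0,T;H)}\norm{y}_{C([0,T];H)}$ and Young) rather than in the $L^2(0,T;V)$-norm — the latter would not close. Everything else is routine Gronwall-and-compactness bookkeeping.
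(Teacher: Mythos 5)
Your argument is correct and follows essentially the same route as the paper: approximate $f$ by a sequence $f_n\in L^2(0,T;H)$, solve the regularised problems by the classical variational theory, show the approximations are Cauchy in $C([0,T];H)\cap L^2(0,T;V)$ by estimating the forcing term through $\int_0^t\ip{f_n-f_m}{y_n-y_m}\,ds\le\norm{f_n-f_m}_{L^1(0,T;H)}\norm{y_n-y_m}_{C([0,t];H)}$ plus Young and Gronwall, pass to the limit in the equation and in the energy identity, and conclude uniqueness via the energy estimate for the difference. The only cosmetic differences (treating $\ell$ as part of the elliptic operator with a G\aa rding inequality versus as a right-hand side, and making the Gronwall uniqueness step explicit where the paper simply invokes monotonicity of $A$) do not change the substance.
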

\begin{proof}
  Let $(f_n)$ be a sequence in $L^2(0,T;H)$ such that $f_n \to f$ in
  $L^1(0,T; H)$ as $n \to \infty$. By the variational theory of
  deterministic equations, for every $n \in \enne$ there exists a
  unique
  \[
    y_n \in H^1(0,T; V') \cap L^2(0,T; V) \embed C([0,T]; H)
  \]
  such that
  \[
    y'_n(t) + Ay_n(t) = \ell(t)y_n(t) + f_n(t) \quad\text{in $V'$ for
      a.e. } t\in(0,T)\,, \qquad y_n(0)=y_0.
  \]
  Therefore, for every $n$, $m\in\enne$, the integration-by-parts
  formula and an easy computation show that
  \begin{align*}
    &\norm{y_n(t)-y_m(t)}^2 + 2C\int_0^t\norm{y_n(s)-y_m(s)}_V^2\,ds \\
    &\quad \leq 2\norm{\ell}_{L^\infty((0,T)\times D)}
      \int_0^t\norm{y_n(s)-y_m(s)}^2\,ds +
      2\int_0^t\ip{f_n(s)-f_m(s)}{y_n(s)-y_m(s)}\,ds\\
    &\quad \leq 2\norm{\ell}_{L^\infty((0,T)\times D)}
      \int_0^t\norm{y_n(s)-y_m(s)}^2\,ds +
      2\norm{y_n-y_m}_{C([0,t]; H)}\norm{f_n-f_m}_{L^1(0,T; H)}
  \end{align*}
  for every $t \in [0,T]$. By the Young inequality we infer then that,
  for every $\eps\geq0$,
  \begin{align*}
    &\norm{y_n-y_m}^2_{C([0,t]; H)} + \norm{y_n-y_m}_{L^2(0,t; V)}^2\\
    &\quad \lesssim \eps\norm{y_n-y_m}^2_{C([0,t]; H)}
    +\frac1{4\eps}\norm{f_n-f_m}_{L^1(0,T; H)}^2 +
    \int_0^t\norm{y_n-y_m}^2_{C([0,s]; H)}\,ds
  \end{align*}
  for every $t\in[0,T]$, from which, thanks to Gronwall's inequality, 
  \[
    \norm{y_n-y_m}_{C([0,T]; H)\cap L^2(0,T; V)}
    \lesssim \norm{f_n-f_m}_{L^1(0,T; H)}.
  \]
  We deduce that there exists $y \in C([0,T]; H)\cap L^2(0,T; V)$ such
  that
  \[
    y_n \to y \qquad \text{in } C([0,T]; H)\cap L^2(0,T; V).
  \]
  It clear follows from $y \in L^2(0,T; V)$ and
  $A \in \cL(V,V')$ that $Ay\in L^2(0,T; V')$ and $Ay_n \to Ay$
  in $L^2(0,T; V')$ as $n \to \infty$. Moreover, we also have that
  \[
    \frac12\norm{y_n(t)}^2 + \int_0^t\ip{Ay_n(s)}{y_n(s)}\,ds =
    \frac12\norm{y_0}^2 + \int_0^t\!\!\int_D\ell(s)|y_n(s)|^2\,ds+
    \int_0^t\ip{f_n(s)}{y_n(s)}\,ds
  \]
  for all $t \in [0,T]$. Hence the last assertion follows letting
  $n \to\infty$. The uniqueness of $y$ is a consequence of the
  monotonicity of $A$.
\end{proof}

In order to prove second-order Fr\'echet differentiability of the
solution map $x \mapsto X^x$ we need to make the further assumption
that $V$ is continuously embedded in $L^4(D)$. This is satisfied, for
instance, if $V=H^1_0$ and $d \leq 4$. In fact, by the Sobolev
embedding theorem, $H^1_0 \embed L^{2^*}$, where
\[
\frac{1}{2^*} = \frac12 - \frac1d
\]
for $d\geq 3$ and $2^*=+\infty$ otherwise.

We proceed as follows: first we establish well-posedness for equation
\eqref{eq:d2g}, and then we show that its unique solution identifies
$D^2X$.
\begin{prop}
  Assume that $V$ is continuously embedded in $L^4(D)$. Then equation
  \eqref{eq:d2g} admits a unique variational solution $Z_{hk}$ for any
  $h$, $h \in H$. Moreover, the map
  \[
  Z^x: H\times H\to L^2(\Omega, E), \qquad (h,k)\mapsto Z_{hk}^x
  \]
  is bilinear and continuous for any $x\in H$, and there exists a positive constant $M>0$ such that 
  \[
  \norm{Z^x}_{\cL_2(H;L^2(\Omega; E))}\leq M \qquad \forall x\in H.
  \]
\end{prop}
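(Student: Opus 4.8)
The plan is to reduce the whole statement to Lemma~\ref{lm:sola}, applied pathwise, once the forcing term $g''(X^x)Y_hY_k$ is shown to enjoy the integrability required by that lemma --- which is exactly where the hypothesis $V\embed L^4(D)$ comes in.

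First I would fix $x\in H$ and $h,k\in H$, write $X:=X^x$, and let $Y_h, Y_k\in L^2(\Omega;E)$ be the solutions of \eqref{eq:dg} with initial data $h$ and $k$ (both adapted, by Theorem~\ref{thm:d1}). Set
\[
\ell := g'(X), \qquad f := g''(X)\,Y_hY_k .
\]
Since $g\in C^2_b(\erre)$, the derivative $g'$ is bounded, so $\ell\in L^\infty((0,T)\times D)$ $\P$-a.s. with $\norm{\ell}_{L^\infty((0,T)\times D)}\le\norm{g}_{\lip}$. For the forcing term, $g''$ is bounded and, using $V\embed L^4(D)$ together with $Y_h, Y_k\in L^2(0,T;V)$ $\P$-a.s.,
\[
\norm{Y_h(s)Y_k(s)}_H \le \norm{Y_h(s)}_{L^4(D)}\norm{Y_k(s)}_{L^4(D)}
\lesssim \norm{Y_h(s)}_V\norm{Y_k(s)}_V \quad\text{for a.e. } s\in(0,T),
\]
so that, by the Cauchy--Schwarz inequality in time, $\norm{f}_{L^1(0,T;H)}\lesssim\norm{Y_h}_{L^2(0,T;V)}\norm{Y_k}_{L^2(0,T;V)}$, and hence $f\in L^1(0,T;H)$ $\P$-a.s. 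Moreover, recalling the bound $\norm{Y_h}_{L^p(\Omega;E)}\lesssim_T\norm{h}$, valid for every $p\ge0$ with a non-random constant, established in the proof of Theorem~\ref{thm:d1}, the Cauchy--Schwarz inequality in $\Omega$ gives
\[
\E\norm{f}^2_{L^1(0,T;H)} \lesssim \bigl(\E\norm{Y_h}^4_E\bigr)^{1/2}\bigl(\E\norm{Y_k}^4_E\bigr)^{1/2}
\lesssim_T \norm{h}^2\norm{k}^2 .
\]

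With these preliminaries I would apply Lemma~\ref{lm:sola} $\omega$ by $\omega$, with $y_0=0$, coefficient $\ell(\omega)$ and forcing $f(\omega)$: it produces the unique $Z_{hk}\in C([0,T];H)\cap L^2(0,T;V)$ solving \eqref{eq:d2g}, together with the energy identity
\[
\frac12\norm{Z_{hk}(t)}^2 + \int_0^t\ip{AZ_{hk}(s)}{Z_{hk}(s)}\,ds
= \int_0^t\!\!\int_D\ell(s)\abs{Z_{hk}(s)}^2\,ds + \int_0^t\ip{f(s)}{Z_{hk}(s)}\,ds .
\]
From here, using the coercivity of $A$, the $L^\infty$ bound on $\ell$, the estimate $\ip{f}{Z_{hk}}\le\norm{f}\,\norm{Z_{hk}}$, and Young's and Gronwall's inequalities exactly as in the proofs of Theorem~\ref{thm:d1} and Lemma~\ref{lm:sola}, one obtains $\norm{Z_{hk}}_E\lesssim_{T,g}\norm{f}_{L^1(0,T;H)}$ $\P$-a.s., with a non-random constant that does not depend on $x$, $h$, $k$. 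Combined with the moment bound on $f$ above, this yields $Z_{hk}\in L^2(\Omega;E)$ and $\norm{Z^x_{hk}}_{L^2(\Omega;E)}\lesssim_{T,g}\norm{h}\,\norm{k}$ with a constant independent of $x$. Adaptedness of $Z_{hk}$ is inherited from that of $f$ and $\ell$, either by invoking the stochastic variational theory for the linear equation \eqref{eq:d2g}, or by running the approximation scheme in the proof of Lemma~\ref{lm:sola} with adapted data, which converges in $C([0,T];H)$.

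Finally, bilinearity of $(h,k)\mapsto Z^x_{hk}$ follows from uniqueness: since $h\mapsto Y_h$ is linear (Theorem~\ref{thm:d1}), the forcing $g''(X)Y_hY_k$ is bilinear in $(h,k)$ and \eqref{eq:d2g} is linear in the unknown, so $Z^x_{\alpha h_1+h_2,\,k}=\alpha Z^x_{h_1,k}+Z^x_{h_2,k}$ and symmetrically in the second slot. The estimate $\norm{Z^x_{hk}}_{L^2(\Omega;E)}\lesssim_{T,g}\norm{h}\,\norm{k}$, being uniform in $x$, then delivers simultaneously the continuity of the bilinear map $Z^x\colon H\times H\to L^2(\Omega;E)$ and the bound $\norm{Z^x}_{\cL_2(H;L^2(\Omega;E))}\le M$ with $M$ independent of $x$. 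The one step I expect to be genuinely delicate is the treatment of the quadratic forcing term $g''(X)Y_hY_k$: proving pathwise that it lies in $L^1(0,T;H)$ --- which is precisely what forces the assumption $V\embed L^4(D)$ --- and that it has a second moment controlled by $\norm{h}^2\norm{k}^2$, the latter relying crucially on the higher-moment bounds on $Y_h$ from Theorem~\ref{thm:d1}. Everything else is a pathwise application of Lemma~\ref{lm:sola} together with the by-now-routine coercivity/Gronwall bookkeeping, carried out so that all constants remain non-random and independent of $x$.
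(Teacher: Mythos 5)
Your proposal is correct and follows essentially the same route as the paper: use $V\embed L^4(D)$ and the boundedness of $g''$ to show the forcing $g''(X)Y_hY_k$ lies in $L^1(0,T;H)$, apply Lemma~\ref{lm:sola} pathwise to get existence, uniqueness and the energy identity, and then conclude bilinearity, continuity and the $x$-uniform bound via coercivity and Gronwall together with the non-random estimate $\norm{Y_h}_{L^2(0,T;V)}\lesssim\norm{h}$ from Theorem~\ref{thm:d1}. The only cosmetic difference is that you pass through fourth moments of $Y_h$ to bound $\E\norm{f}^2_{L^1(0,T;H)}$, whereas the paper exploits the pathwise (non-random) bound directly; both are valid.
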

\begin{proof}
  H\"older's inequality and the boundedness of $g''$ yield
  \[
  \norm{g''(X)Y_hY_k}\leq
  \norm{g''}_{\lip} \norm{Y_h}_{L^4(D)}\norm{Y_k}_{L^4(D)}\lesssim
  \norm{Y_h}_V\norm{Y_k}_V,
  \]
  so that $g''(X)Y_hY_k \in L^1(0,T; H)$ since $Y_h, Y_k \in L^2(0,T;
  V)$.  Hence, by Lemma~\ref{lm:sola} there is a unique
  \[
  Z_{hk} \in C([0,T]; H)\cap L^2(0,T; V)
  \]
  such that
  \[
  Z_{hk}(t) + \int_0^tAZ_{hk}(s)\,ds = \int_0^tg'(X(s))Z_{hk}(s)\,ds +
  \int_0^tg''(X(s))Y_h(s)Y_k(s)\,ds \qquad\forall t\in[0,T].
  \]
  Let us show that $(h,k)\mapsto Z_{hk}$ is a continuous bilinear map.
  The bilinearity is clear from equation \eqref{eq:d2g}.  Moreover,
  testing by $Z_{hk}$ and using the coercivity of $A$ we have that
  \begin{align*}
    \norm{Z_{hk}(t)}^2
    &+ \int_0^t\norm{Z_{hk}(s)}_V^2\,ds\lesssim
      \norm{g}_{C^1_b}\int_0^t\norm{Z_{hk}(s)}^2\,ds
      +\norm{g}_{C^2_b}\int_0^t\norm{Y_h(s)}_V\norm{Y_k(s)}_V\,ds\\
    &\leq\norm{g}_{C^1_b}\int_0^t\norm{Z_{hk}(s)}^2\,ds
      + \norm{g}_{C^2_b}\norm{Y_h}_{L^2(0,T; V)}\norm{Y_k}_{L^2(0,T; V)}\\
    &\lesssim_T\norm{g}_{C^1_b}\int_0^t\norm{Z_{hk}(s)}^2\,ds +
      \norm{g}_{C^2_b}\norm{h}\norm{k}
  \end{align*}
  and Gronwall's inequality yields
  \[
  \norm{Z^x_{hk}}_{L^2(\Omega; C([0,T]; H))\cap L^2(\Omega; L^2(0,T;
    V))}\lesssim \norm{h}\norm{k} \qquad\forall h,k,x\in H,
  \]
  from which the last assertion follows.
\end{proof}

\begin{thm}\label{thm:d2}
  Assume that $V$ is continuously embedded in $L^4(D)$. Then the
  solution map $x \mapsto X^x$ is of class $C^2_b$ from $H$ to
  $L^2(\Omega;E)$.
\end{thm}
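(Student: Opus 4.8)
The plan is to mimic, one order higher, the proof of Theorem~\ref{thm:d1}: we show that the map $x\mapsto Y^x:=(h\mapsto Y_h)$ — which by Theorem~\ref{thm:d1} is the Fréchet derivative of $x\mapsto X^x$ and is bounded and continuous from $H$ into $\cL(H;L^2(\Omega;E))$ — is itself continuously Fréchet differentiable with bounded derivative, the latter being the $\cL_2(H;L^2(\Omega;E))$-valued map $x\mapsto Z^x:=((h,k)\mapsto Z_{hk})$ furnished by the preceding proposition. Granting this, $x\mapsto X^x$ is of class $C^2_b$ from $H$ to $L^2(\Omega;E)$. As in the proof of Theorem~\ref{thm:d1}, and invoking again \cite[Theorem~1.9]{AmbPro}, it suffices to prove: (i) for every $x,h,k\in H$ the directional derivative $\lim_{\eps\to0}\eps^{-1}(Y^{x+\eps k}_h-Y^x_h)$ exists in $L^2(\Omega;E)$ and equals $Z^x_{hk}$; and (ii) the map $x\mapsto Z^x$ is continuous from $H$ into $\cL_2(H;L^2(\Omega;E))$. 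The uniform bound $\norm{Z^x}_{\cL_2(H;L^2(\Omega;E))}\le M$ is already available from the preceding proposition, and together with (i)--(ii) it yields the $C^1_b$-regularity of $x\mapsto Y^x$.

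To prove (i), fix $x,h,k$ and set $z_\eps:=\eps^{-1}(Y^{x+\eps k}_h-Y^x_h)-Z_{hk}$. Subtracting the equations \eqref{eq:dg} satisfied by $Y^{x+\eps k}_h$ and $Y^x_h$, and \eqref{eq:d2g} satisfied by $Z_{hk}$, one finds that $z_\eps\in E$ solves, $\P$-a.s.,
\[
  z_\eps(t)+\int_0^t Az_\eps(s)\,ds=\int_0^t g'(X^{x+\eps k}(s))z_\eps(s)\,ds+\int_0^t\rho_\eps(s)\,ds,\qquad z_\eps(0)=0,
\]
where
\[
  \rho_\eps=\bigl(g'(X^{x+\eps k})-g'(X^x)\bigr)Z_{hk}+\Bigl(\eps^{-1}\bigl(g'(X^{x+\eps k})-g'(X^x)\bigr)-g''(X^x)Y_k\Bigr)Y_h.
\]
Here the hypothesis $V\embed L^4(D)$ enters decisively. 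Writing $\eps^{-1}(g'(X^{x+\eps k})-g'(X^x))=\Gamma_\eps\,\eps^{-1}(X^{x+\eps k}-X^x)$ with $\Gamma_\eps:=\int_0^1 g''(X^x+s(X^{x+\eps k}-X^x))\,ds$, so that $\abs{\Gamma_\eps}\le\norm{g''}_\infty$, and recalling that $Y_h,Y_k,Z_{hk}\in L^2(0,T;V)\embed L^2(0,T;L^4(D))$ while $\eps^{-1}(X^{x+\eps k}-X^x)$ is bounded in $L^2(\Omega;E)$ and converges to $Y_k$ there (Theorem~\ref{thm:d1}), the boundedness of the product map $L^2(0,T;L^4(D))\times L^2(0,T;L^4(D))\to L^1(0,T;L^2(D))=L^1(0,T;H)$ shows that $\rho_\eps\in L^1(0,T;H)$ $\P$-a.s. and that $\rho_\eps\to0$ in $L^2(\Omega;L^1(0,T;H))$ as $\eps\to0$ (for the term containing $\Gamma_\eps-g''(X^x)$ one passes to a subsequence along which $X^{x+\eps k}\to X^x$ a.e. on $\Omega\times(0,T)\times D$ and applies dominated convergence, using the pathwise bound $\norm{Y_h}_{L^2(0,T;L^4(D))}\lesssim\norm{h}$ with a non-random, $x$-independent constant). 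Applying Lemma~\ref{lm:sola} to the equation for $z_\eps$ (with $\ell=g'(X^{x+\eps k})\in L^\infty((0,T)\times D)$, $\norm{\ell}_\infty\le\norm{g}_{\lip}$, and $f=\rho_\eps$), then using the coercivity of $A$ and Gronwall's inequality, one obtains $\norm{z_\eps}_E\lesssim_{T,g}\norm{\rho_\eps}_{L^1(0,T;H)}$ $\P$-a.s. with a non-random constant, hence $\norm{z_\eps}_{L^2(\Omega;E)}\to0$, which is (i).

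For (ii), let $x_n\to x$ in $H$ and write $X^n:=X^{x_n}$, $Y^n_h:=Y^{x_n}_h$, $Z^n_{hk}:=Z^{x_n}_{hk}$. Subtracting the equations \eqref{eq:d2g} at $x_n$ and at $x$, the difference $Z^n_{hk}-Z_{hk}$ solves a linear equation of the same type as in Lemma~\ref{lm:sola}, with null initial datum and forcing term
\[
  \bigl(g'(X^n)-g'(X)\bigr)Z_{hk}+g''(X^n)Y^n_hY^n_k-g''(X)Y_hY_k,
\]
where the last difference is split in the usual telescoping way and each of its products is estimated by Hölder's inequality through $L^2(0,T;L^4(D))$. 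Lemma~\ref{lm:sola}, coercivity of $A$ and Gronwall's inequality then yield, $\P$-a.s. and with a non-random constant, an estimate of $\sup_{\norm{h}\le1,\,\norm{k}\le1}\norm{Z^n_{hk}-Z_{hk}}_E$ by the sum of $\norm{X^n-X}_E$, of $\sup_{\norm{h}\le1}\norm{Y^n_h-Y_h}_E$, and of quantities measuring, in appropriate $L^1$-in-time norms, the discrepancies $g'(X^n)-g'(X)$ and $g''(X^n)-g''(X)$ against the (uniformly bounded) families $\{Y_h\}$, $\{Z_{hk}\}$; taking $L^2(\Omega)$-norms and letting $n\to\infty$, all these vanish by the continuity properties of $x\mapsto X^x$ and $x\mapsto Y^x$ from Theorem~\ref{thm:d1} (in particular $X^n\to X$ in $L^2(\Omega;E)$ and $\E\sup_{\norm{h}\le1}\norm{Y^n_h-Y_h}_E^2\to0$), by $g\in C^2_b$, and by dominated convergence. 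Hence $\norm{Z^{x_n}-Z^x}_{\cL_2(H;L^2(\Omega;E))}\to0$.

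The main obstacle is step (ii): as in Theorem~\ref{thm:d1}, the continuity of the derivative map is the delicate point, because the error terms must be controlled \emph{uniformly} in $h$ and $k$ on the unit ball of $H$ and one must pass to the limit under an expectation; this is made possible precisely by the pathwise estimates with non-random, $x$-independent constants available for $Y^x$ and $Z^x$, together with dominated convergence. Underlying the whole argument, the single new structural ingredient relative to Theorem~\ref{thm:d1} is the embedding $V\embed L^4(D)$, which guarantees that products of two functions in $L^2(0,T;V)$ — such as $Y_hY_k$ and $g''(X)Y_hY_k$ — lie in $L^1(0,T;H)$, so that Lemma~\ref{lm:sola} and its energy identity remain applicable to all the auxiliary linear equations that appear.
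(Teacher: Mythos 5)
Your proposal is correct and follows essentially the same route as the paper: Step 1 establishes the G\^ateaux differentiability of $x\mapsto Y^x$ with derivative $Z^x$ via an energy estimate (Lemma~\ref{lm:sola}/integration by parts plus Gronwall) and the embedding $V\embed L^4(D)$, and Step 2 proves continuity of $x\mapsto Z^x$ into $\cL_2(H;L^2(\Omega;E))$ to invoke the criterion of \cite[Theorem~1.9]{AmbPro}, exactly as in the paper. The only (harmless) deviation is bookkeeping: you linearize around $g'(X^{x+\eps k})$ and use the integral mean-value factor $\Gamma_\eps$, which merges the paper's error terms $R'_\eps$, $R''_\eps$, $S''_\eps$ into your $\rho_\eps$, but the estimates, the uniformity in $h,k$ on the unit ball, and the use of Theorem~\ref{thm:d1} are the same.
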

\begin{proof}
  We are going to prove first that the Fr\'echet derivative of the
  solution map is G\^ateaux-differentiable, with G\^ateaux derivative
  equal to $Z^x:=(h,k) \mapsto Z^x_{hk}$, then we shall then show that
  $x \mapsto Z^x$ is continuous and bounded as a map from $H$ to
  $\cL_2(H;L^2(\Omega;E))$.

  \noindent%
  \textsl{Step 1.} Let $x \in H$ be arbitrary but fixed, and consider
  the family of maps $z^\varepsilon \in \cL_2(H;L^2(\Omega;E))$,
  indexed by $\varepsilon \in \erre$, defined as
  \[
  z^\varepsilon : (h,k) \longmapsto
  z^\varepsilon_{hk} := \frac1\eps\left(Y^{x+\eps k}_h- Y^x_h\right) - Z^x_{hk}.
  \]
  Elementary manipulations based on the equations satisfied by $Y^x$
  and $Z^x$ show that
  \[
    z_{hk}^\eps(t) + \int_0^t Az_{hk}^\eps(s)\,ds
    = \int_0^t \Bigl( \frac{g'(X^{\eps}) Y_h^{\eps} - g'(X)Y_h}{\eps}
      - g'(X)Z_{hk}-g''(X)Y_hY_k \Bigr)(s)\,ds,
  \]
  where the integrand on the right-hand side can be written as
  $R_\varepsilon + S_\varepsilon$, with
  \begin{align*}
    R_\eps &= \Bigl( \frac{g'(X^{\eps}) - g'(X)}{\eps}
    - g''(X)Y_k \Bigr) Y_h,\\
    S_\varepsilon &= \Bigl( g'(X^{\eps}) \frac{Y_h^{\eps}-Y_h}{\eps}
    -g'(X)Z_{hk} \Bigr).
  \end{align*}
  Further algebraic manipulations show that $R_\varepsilon =
  R'_\varepsilon + R''_\varepsilon$ and $S_\varepsilon =
  S'_\varepsilon + S''_\varepsilon$, where
  \begin{align*}
    R'_\varepsilon &:= \Bigl( \frac{g'(X + \eps Y_k)- g'(X)}{\eps} -
    g''(X)Y_k \Bigr) Y_h,\\
    R''_\varepsilon &:= \frac{g'(X^{\eps}) - g'(X+\eps Y_k)}{\eps} Y_h,\\
    S'_\varepsilon &:= g'(X)z^\varepsilon_{hk},\\
    S''_\varepsilon &:= \bigl( g'(X^{\eps}) - g'(X) \bigr)
    \frac{Y_h^{\eps} - Y_h}{\eps}.
  \end{align*}
  The integration-by-parts formula and obvious estimates yield
  \[
    \frac12 \norm{z_{hk}^\eps(t)}^2 + C\int_0^t
    \norm{z_{hk}^\eps(s)}_V^2\,ds \leq
    \norm{g}_{\lip} \int_0^t \norm{z^\varepsilon_{hk}(s)}_V^2\,ds +
    \int_0^t \ip[\big]{R_\eps + S''_\eps}{z_{hk}^\eps}(s)\,ds,
  \]
  Taking the supremum on both sides, one is left with, thanks to
  Young's inequality,
  \[
  \norm{z_{hk}^\eps}_{C([0,t];H)}^2 +
  \norm{z_{hk}^\eps}^2_{L^2(0,t;V)} \lesssim
  \delta \norm{z_{hk}^\eps}_{C([0,t];H)}^2
  + \int_0^t   \norm{z_{hk}^\eps}_{C([0,s];H)}^2\,ds
  + \frac{1}{\delta} \norm{R_\eps + S''_\varepsilon}_{L^1(0,T;H)}^2
  \]
  for all $\delta>0$, from which it follows, taking $\delta$
  sufficiently small and applying Gronwall's inequality,
  \[
  \E\norm{z_{hk}^\eps}_E^2 \lesssim
  \E\norm{R_\eps}_{L^1(0,T;H)}^2 + \E\norm{S''_\eps}_{L^1(0,T;H)}^2.
  \]
  We are going to show that the right-hand side tends to zero as
  $\varepsilon \to 0$. Since $g \in C^2_b$, it is evident that
  $R_\eps' \to 0$ almost everywhere as $\eps \to 0$ as well as that
  \[
  \abs{R'_\eps} \leq 2\norm{g''}_\infty \abs[\big]{Y_kY_h}.
  \]
  Since 
  \[
  \sup_{\norm{h}\leq 1} \norm{Y_hY_k}_{L^1(0,T;H)} \lesssim
  \sup_{\norm{h}\leq 1} \norm{Y_h}_{L^2(0,T;V)}
  \norm{Y_k}_{L^2(0,T;V)} \lesssim 
  \sup_{\norm{h} \leq 1} \norm{h} \norm{k} \leq \norm{k},
  \]
  the dominated convergence theorem yields
  \[
  \lim_{\varepsilon \to 0} \sup_{\norm{h}\leq 1}
  \E\norm[\big]{R_\eps'}_{L^1(0,T;H)}^2 = 0.
  \]
  Moreover, we have
  \[
  \abs{R''_\eps} \leq \norm{g''}_\infty 
  \abs[\bigg]{\frac{X^{x+\eps k} - X^x}{\eps} - Y_k^x}
  \abs{Y_h^x},
  \]
  so that
  \[
  \norm{R_\eps''}_{L^1(0,T; H)} \lesssim \norm[\bigg]{%
  \frac{X^{x+\eps k} - X^x}{\eps} - Y_k^x}_{L^2(0,T;V)}
  \norm{Y_h^x}_{L^2(0,T;V)},
  \]
  where $\norm{Y_h^x}_{L^2(0,T; V)} \lesssim \norm{h}$, hence, by
  Theorem~\ref{thm:d1},
  \[
  \sup_{\norm{h} \leq 1} \E\norm{R_\eps''}_{L^1(0,T;H)}^2
  \lesssim \E\norm[\bigg]{%
  \frac{X^{x+\eps k} - X^x}{\eps} - Y_k^x}^2_{L^2(0,T;V)} \to 0
  \]
  Finally, from
  \[
  \abs{S''_\eps} \leq \norm{g''}_\infty
  \abs[\bigg]{\frac{X^{x+\eps k}-X^x}{\eps}}
  \abs{Y_h^{x+\eps k}-Y_h^x}
  \]
  we deduce
  \[
  \norm{S_\eps'}_{L^1(0,T; H)} \lesssim \norm[\bigg]{%
  \frac{X^{x+\eps k} - X^x}{\varepsilon}}_{L^2(0,T;V)}
  \norm[\big]{Y_h^{x+\eps k} - Y_h^x}_{L^2(0,T;V)}.
  \]
  Since $(X^{x+\eps k} - X^x)/\varepsilon \to Y^x_k$ in $E$ as
  $\varepsilon \to 0$ and $x \mapsto Y_h^x$ is continuous from $H$ to
  $E$, we infer that $\norm{S_\eps'}_{L^1(0,T; H)} \to 0$. Moreover,
  it follows from
  \[
  \norm[\big]{Y_h^{x+\eps k} - Y_h^x}_{L^2(0,T;V)} \leq 2 \norm{h}
  \]
  that
  \[
  \sup_{\norm{h}\leq 1}\norm{S_\eps'}_{L^1(0,T;H)} \lesssim
  \norm[\Big]{\frac{X^{x+\eps k}-X^x}\eps}_{L^2(0,T;V)}.
  \]
  Recalling that, by Theorem~\ref{thm:d1}, $(X^{x+\eps k} -
  X^x)/\varepsilon \to Y^x_k$ in $L^2(\Omega;E)$ as $\varepsilon \to
  0$, this implies
  \[
  \lim_{\varepsilon \to 0} \sup_{\norm{h} \leq 1}
  \E\norm[\big]{S_\eps''}_{L^1(0,T;H)}^2 = 0.
  \]
  We thus conclude that
  \[
  \lim_{\varepsilon \to 0} \sup_{\norm h\leq 1}
  \norm[\big]{z_{hk}^\eps}_{L^2(\Omega;E)} = 0 
  \qquad\forall k \in H,
  \]
  i.e. the directional derivative of $x \mapsto Y^x:H \mapsto
  \cL(H,L^2(\Omega;E))$ exists for all directions and is given by the
  map $x \mapsto Z^x: H \to \cL_2(H;L^2(\Omega;E))$. Since we have
  already proved that $(h,k) \mapsto Z^x_{hk}$ is bilinear and
  continuous, we infer that $x\mapsto Y^x$ is G\^ateaux differentiable
  with derivative $Z^x$.

  \smallskip

  \noindent%
  \textsl{Step 2.} In order to conclude that $x \mapsto Y^x$ is
  Fr\'echet differentiable (with derivative necessarily equal to $Z$)
  it is enough to show, in view of a criterion already mentioned, that
  the map
  \begin{align*}
    x &\longmapsto Z^x\\
    H &\longrightarrow \cL_2(H;L^2(\Omega;E))
  \end{align*}
  is continuous. Let $(x_n)_n\subseteq H$ be a sequence converging to
  $x$ in $H$. We have, writing $Z^n$ in place of $Z^{x_n}$ for simplicity,
  \[
  (Z^n_{hk}-Z_{hk})' + A(Z^n_{hk}-Z_{hk}) = g'(X^n)Z^n_{hk} -
  g'(X)Z_{hk} + g''(X^n)Y^n_hY^n_k - g''(X)Y_hY_k,
  \]
  with initial condition $Z^n_{hk}(0)-Z_{hk}(0)=0$.  The right-hand
  side of the equation can be written as $R=\sum_{i\leq4} R_i$, with
  \begin{align*}
    R_1 &:= g'(X^n) ( Z_{hk}^n-Z_{hk} ), & 
    R_2 &:= ( g'(X^n)-g'(X) ) Z_{hk},\\
    R_3 &:= g''(X^n) ( Y_h^nY_k^n-Y_hY_k ), &
    R_4 &:= ( g''(X^n)-g''(X) ) Y_hY_k,
  \end{align*}
  so that, by the integration-by-parts formula,
  \[
    \frac12 \norm{Z^n_{hk}(t)-Z_{hk}(t)}^2 
    + C\int_0^t \norm{Z^n_{hk}(s)-Z_{hk}(s)}_V^2 \,ds
    \leq \int_0^t \ip[\big]{R}{Z^n_{hk}-Z_{hk}}(s)\,ds,
  \]
  where
  \[
    \int_0^t \ip{R_1}{Z^n_{hk}-Z_{hk}}(s)\,ds
    \leq \norm{g'}_\infty \int_0^t \norm{Z_{hk}^n(s)-Z_{hk}(s)}^2\,ds,
  \]
  and, for $i \neq 1$, by Young's inequality,
  \begin{align*}
    \int_0^t \ip{R_i}{Z^n_{hk}-Z_{hk}}(s)\,ds &\leq
    \norm[\big]{Z^n_{hk}-Z_{hk}}_{C([0,t];H)}
    \norm[\big]{R_i}_{L^1(0,t;H)}\\
    &\leq \delta \norm[\big]{Z^n_{hk}-Z_{hk}}^2_{C([0,t];H)} +
    \frac{1}{\delta} \norm[\big]{R_i}^2_{L^1(0,t;H)}.
  \end{align*}
  By an argument based on the Gronwall's inequality already used
  several times we obtain
  \[
    \norm[\big]{Z^n_{hk}-Z_{hk}}^2_E \lesssim
    \norm[\big]{R_2+R_3+R_4}^2_{L^1(0,T; H)},
  \]
  where $\norm{R_2} \leq \norm{g''}_\infty \norm{(X^n-X)Z_{hk}}$ and,
  by the bilinearity of $Z$,
  \[
  \norm[\big]{(X^n-X)Z_{hk}}_{L^1(0,T;H)} \lesssim
  \norm[\big]{X^n-X}_{L^2(0,T;V)} 
  \norm[\big]{Z_{hk}}_{L^2(0,T;V)} \lesssim \norm[\big]{X^n-X}_{L^2(0,T;V)} 
  \norm{h} \norm{k},
  \]
  from which it follows
  \[
  \sup_{\norm{h}, \norm{k}\leq
    1}\E\norm[\big]{(X^n-X)Z_{hk}}_{L^1(0,T;H)}^2
  \lesssim \norm[\big]{X^n-X}_{L^2(\Omega; L^2(0,T; V))} \to 0
  \]
  because $x\mapsto X^x$ is continuous from $H$ to
  $L^2(\Omega;E)$. Moreover, since $\norm{R_3} \leq \norm{g''}_\infty
  \norm{Y_h^nY_k^n-Y_hY_k}$, we have, recalling that $V \embed L^4$,
  \[
    \norm[\big]{R_3}_{L^1(0,T;H)} \leq
    \norm[\big]{Y_h^n - Y_h}_{L^2(0,T;V)}
    \norm[\big]{Y_k}_{L^2(0,T;V)}
    + \norm[\big]{Y_k^n - Y_k}_{L^2(0,T;V)}
    \norm[\big]{Y^n_h}_{L^2(0,T;V)},
  \]
  where both terms on the right-hand side tend to zero because $Y_h^n
  \to Y_h$ in $L^2(0,T;V)$ for all $h \in H$. The estimate
  \[
  \norm[\big]{Y_h^nY_k^n-Y_hY_k}_{L^1(0,T; H)}\lesssim \norm{h} \norm{k}
  \]
  then implies, by the dominated convergence theorem,
  \[
  \sup_{\norm{h},\norm{k} \leq 1} \E\norm[\big]{R_3}^2_{L^1(0,T;H)}
  \lesssim \sup_{\norm{h},\norm{k} \leq 1}
  \E\norm[\big]{Y_h^nY_k^n-Y_hY_k}_{L^1(0,T;H)}^2 \to 0.
  \]
  It remains to consider $R_4$: it is clear that
  $(g''(X^n)-g''(X))Y_hY_k\to 0$ almost everywhere by the continuity
  of $g''$, and, as before,
  \[
  \norm[\big]{(g''(X^n)-g''(X))Y_hY_k}_{L^1(0,T;H)} \lesssim \norm{g''}_\infty
  \norm{h} \norm{k},
  \]
  hence the dominated convergence theorem yields
  \[
  \sup_{\norm{h},\norm{k} \leq 1} \E\norm[\big]{R_4}^2_{L^1(0,T;H)}
  \to 0.
  \]
  We have thus proved that, as $n \to \infty$,
  \[
  \norm[\big]{Z^n-Z}_{\cL_2(H;L^2(\Omega;E))} 
  = \sup_{\norm{h},\norm{k} \leq 1}
  \norm[\big]{Z_{hk}^n-Z_{hk}}_{L^2(\Omega;E)} \to 0.
  \]
  Recalling that $x \mapsto Z^x$ is bounded on $H$, we conclude that
  $x \mapsto X^x$ is twice Fr\'echet-differentiable with continuous
  and bounded derivatives.
\end{proof}


\ifbozza\newpage\else\fi
\section{Invariant measures}
\label{sec:inv}
Throughout this section, we consider equation \eqref{eq:0} with
$X_0 \in H$. Since all coefficients do not depend explicitly on
$\omega \in \Omega$, it follows by a standard argument that the
solution $X$ to \eqref{eq:0} is Markovian.  Let $P=(P_t)_{t \geq 0}$ be
the transition semigroup defined by
\[
  (P_t\varphi)(x) := \E \varphi(X^x(t)) \qquad \forall x\in H,
  \quad \varphi \in C_b(H).
\]
We shall assume from now on that the pair $(A,B)$ satisfies the
coercivity condition
\begin{equation}
  \label{eq:coerc}
  \ip{Ax}{x} \geq \frac12\norm{B(x)}^2_{\cL^2(U,H)} + C\norm{x}_V^2 - C_0
  \qquad \forall  x \in V,
\end{equation}
with $C_0>0$ a constant.

\begin{thm}
  \label{thm:exist}
  The set of invariant measures for the transition semigroup
  $(P_t)_{t \geq 0}$ is not empty.
\end{thm}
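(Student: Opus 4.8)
The plan is to apply the Krylov--Bogoliubov criterion recalled in Section~\ref{sec:prelim}. Since $H=L^2(D)$ is a complete separable metric space, it suffices to verify the two hypotheses (a) and (b) of that theorem: the Feller property of $(P_t)_{t\ge0}$, and the tightness of the family of time-averaged measures $(\mu^{\delta_0}_t)_{t\ge0}$ associated to the Dirac mass at some convenient point $x\in H$ — for instance $x=0$.

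First I would establish the Feller property. The Lipschitz-continuity of the solution map $X_0\mapsto X$ from $L^2(\Omega;H)$ to $L^2(\Omega;L^\infty(0,T;H))$ asserted in Theorem~\ref{thm:WP} gives, in particular, that $x\mapsto X^x(t)$ is continuous from $H$ to $L^2(\Omega;H)$ for each fixed $t$; hence for $\varphi\in C_b(H)$, if $x_n\to x$ in $H$ then $\varphi(X^{x_n}(t))\to\varphi(X^x(t))$ in probability along a subsequence, and by boundedness of $\varphi$ and dominated convergence $P_t\varphi(x_n)=\E\varphi(X^{x_n}(t))\to\E\varphi(X^x(t))=P_t\varphi(x)$. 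A subsequence argument upgrades this to convergence of the whole sequence, so $P_t:C_b(H)\to C_b(H)$.

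Next, and this is the crux, I would prove the tightness of $\bigl(\tfrac1t\int_0^t P_s(0,\cdot)\,ds\bigr)_{t\ge0}$, which by the remark following the Krylov--Bogoliubov theorem yields hypothesis (b). The key tool is an a priori estimate obtained by applying the It\^o formula for the square of the norm, Proposition~\ref{prop:Ito}, to the solution $X=X^0$ of \eqref{eq:0}. This gives, using $\xi\in\beta(X)$ and $\xi X\ge j(X)+j^*(\xi)\ge0$ together with the coercivity condition \eqref{eq:coerc} that absorbs the It\^o-correction term $\tfrac12\norm{B(X)}^2_{\cL^2(U,H)}$,
\[
  \tfrac12\E\norm{X(t)}^2 + C\,\E\!\int_0^t\norm{X(s)}_V^2\,ds
  + \E\!\int_0^t\!\!\int_D\bigl(j(X)+j^*(\xi)\bigr)\,dx\,ds \le \tfrac12\norm{X_0}^2 + C_0 t.
\]
Dividing by $t$ and discarding nonnegative terms yields the uniform bound
\[
  \frac1t\E\!\int_0^t\norm{X(s)}_V^2\,ds \le \frac{\norm{X_0}^2}{2Ct} + \frac{C_0}{C},
\]
so that $\sup_{t\ge1}\mu^{\delta_0}_t\bigl(\norm{\cdot}_V^2\bigr)<\infty$. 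Since $V$ is compactly embedded in $H$, the balls $\{v:\norm{v}_V\le R\}$ are compact in $H$, and Chebyshev's inequality then gives, for every $\eps>0$, a radius $R_\eps$ with $\mu^{\delta_0}_t\bigl(\{\norm{\cdot}_V>R_\eps\}\bigr)\le \eps$ uniformly in $t\ge1$ (the finitely many $t<1$ are handled separately or absorbed). Hence $(\mu^{\delta_0}_t)_{t\ge0}$ is tight, and Prokhorov's theorem identifies the tight family as relatively compact.

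With (a) and (b) in hand, the Krylov--Bogoliubov theorem furnishes a nonempty set of invariant measures, concluding the proof. The main obstacle is the a priori estimate: one must justify applying Proposition~\ref{prop:Ito} to the genuine (variational, possibly low-regularity) solution $(X,\xi)$ of \eqref{eq:0} — checking that its regularity matches the hypotheses of that proposition, in particular the integrability $j(\alpha X)+j^*(\alpha\xi)\in L^1(\Omega\times(0,T)\times D)$, which is exactly part of the conclusion of Theorem~\ref{thm:WP} — and then handling the local martingale term (by localization/stopping times, noting its expectation vanishes) so that the bound above holds rigorously in expectation.
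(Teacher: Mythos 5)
Your proposal is correct and follows essentially the same route as the paper: the It\^o formula of Proposition~\ref{prop:Ito} combined with the coercivity condition \eqref{eq:coerc} and the Fenchel identity $\xi X=j(X)+j^*(\xi)$ (note it is an equality, since $\xi\in\beta(X)$) gives exactly the paper's estimate \eqref{eq:est1}, and tightness of the time-averaged laws then follows from Markov's inequality and the compact embedding $V\embed H$, with Krylov--Bogoliubov concluding. The only cosmetic differences are that you spell out the Feller property (which the paper leaves implicit via Theorem~\ref{thm:WP}) and propose localization for the stochastic integral, where the paper instead verifies directly that it is a true martingale via $\E[M,M]_T^{1/2}<\infty$.
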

\begin{proof}
  Let $(X,\xi)$ be the unique strong solution to \eqref{eq:0}. For every
  $t \geq 0$ one has, by Proposition~\ref{prop:Ito},
  \begin{align*}
    &\frac12 \norm{X(t)}^2 + \int_0^t \ip{AX(s)}{X(s)}\,ds
      + \int_0^t\!\!\int_D \xi(s)X(s)\,ds\\
    &\hspace*{3em}= \frac12 \norm{x}^2
      + \frac12 \int_0^t \norm[\big]{B(X(s))}^2_{\cL^2(U,H)}\,ds
      + \int_0^t X(s)B(X(s))\,dW(s).
  \end{align*}
  Let us show that the stochastic integral $M:= XB(X) \cdot W$ on the
  right-hand side of this identity is a martingale. For this it
  suffices to show that $\E[M,M]_T^{1/2}$ is finite: one has, by the
  ideal property of Hilbert-Schmidt operators and the Cauchy-Schwarz
  inequality,
  \begin{align*}
    \E[M,M]_T^{1/2}
    &= \E\biggl(
    \int_0^T \norm[\big]{XB(X)}^2_{\cL^2(U,\erre)}\,ds \biggr)^{1/2}\\
    &\leq \E\norm[\big]{X}_{L^\infty(0,T;H)} \biggl(
      \int_0^T \norm[\big]{B(X)}^2_{\cL^2(U,H)}\,ds \biggr)^{1/2}\\
    &\leq \Bigl( \E\norm[\big]{X}^2_{L^\infty(0,T;H)} \Bigr)^{1/2}
      \biggl(\E\int_0^T \norm[\big]{B(X)}^2_{\cL^2(U,H)}\,ds \biggr)^{1/2},
  \end{align*}
  where the last term is finite thanks to Theorem~\ref{thm:WP} and the
  assumption of linear growth on $B$. Therefore, recalling that, for
  any $r,s \in \erre$, $j(r) + j^*(s)=rs$ if and only if
  $s \in \beta(r)$, one has, taking the coercivity condition
  \eqref{eq:coerc} into account,
  \begin{equation}
    \label{eq:est1}
    C \E\int_0^t\norm{X(s)}^2_V\,ds +
    \E\int_0^t\!\!\int_D j(X(s)) \,ds + \E\int_0^t\!\!\int_D
    j^*(\xi(s)) \,ds \leq \frac12\norm{x}^2 + C_0t
  \end{equation}
  for all $t \geq 0$.
  Let $x=0$. For any $t \geq 0$ the law of the random variable $X(t)$
  is a probability measure on $H$, which we shall denote by
  $\pi_t$. We are now going to show that the family of measures
  $(\mu_t)_{t>0}$ on $H$ defined by
  \[
  \mu_t: E \longmapsto \frac1t \int_0^t \pi_s(E)\,ds
  \]
  is tight. The ball $B_n$ in $V$ of radius $n\in\mathbb{N}$ is a
  compact subset of $H$, because the embedding $V \embed H$ is
  compact. Moreover, Markov's inequality and \eqref{eq:est1} yield
  \begin{align*}
    \mu_t(B_n^c)
    &= \frac1t\int_0^t\pi_s(B_n^c)\,ds
      = \frac1t\int_0^t\P\bigl( \norm{X(s)}_V^2 > n^2 \bigr) \,ds\\
    &\leq \frac1{tn^2}\int_0^t\E\norm{X(s)}_V^2\,ds \leq
      \frac1{Ctn^2} \, C_0 t = \frac{C_0}{Cn^2},
  \end{align*}
  hence also
  \[
  \sup_{t>0}\mu_t(B_n^c) \leq \frac{C_0}{Cn^2} \to 0 \quad\text{as } n\to\infty.
  \]
  It follows by Prokhorov's theorem that there exists a probability
  measure $\mu$ on $H$ and a sequence $(t_k)_{k\in\enne}$ increasing
  to infinity such that $\mu_{t_k}$ converges to $\mu$ in the topology
  $\sigma(\mathscr{M}_1(H),C_b(H))$ as $k \to \infty$. Furthermore,
  $\mu$ is an invariant measure for the transition semigroup $P$,
  thanks to the Krylov-Bogoliubov theorem.
\end{proof}

We are now going to prove integrability properties of \emph{all}
invariant measures, which in turn provide information on their
support. We start with a (relatively) simple yet crucial estimate.
\begin{prop}
  Let $\mu$ be an invariant measure for the transition semigroup
  $(P_t)$. Then one has
  \[
  \int_H \norm{x}^2\,\mu(dx) \leq \frac{K^2C_0}{C},
  \]
  where $K$ is the norm of the embedding $V \embed H$.
\end{prop}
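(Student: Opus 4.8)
The plan is to integrate the energy estimate \eqref{eq:est1} against $\mu$, using the invariance of $\mu$, but only after truncating the (a priori possibly non-integrable) map $x\mapsto\norm{x}^2$ and then sending the truncation parameters to infinity in a carefully chosen order. First I would recall that \eqref{eq:est1}, established in the proof of Theorem~\ref{thm:exist} for an arbitrary deterministic initial datum $x\in H$, implies, after dropping the nonnegative terms containing $j$ and $j^*$ and using $\norm{v}\leq K\norm{v}_V$ for $v\in V$,
\[
\frac1t\,\E\int_0^t\norm{X^x(s)}^2\,ds \;\leq\; \frac{K^2}{2Ct}\,\norm{x}^2 + \frac{K^2C_0}{C}
\qquad\forall\,t>0,\ x\in H.
\]

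Next, for $N\in\enne$ I would set $\varphi_N:=\norm{\cdot}^2\wedge N\in C_b(H)$ and use the invariance of $\mu$ (which holds for bounded Borel functions), Tonelli's theorem, and the joint measurability of $(s,x)\mapsto(P_s\varphi_N)(x)$ to write
\[
\int_H\varphi_N\,d\mu \;=\; \frac1t\int_0^t\!\int_H(P_s\varphi_N)(x)\,\mu(dx)\,ds \;=\; \int_H\Bigl(\frac1t\,\E\int_0^t\varphi_N(X^x(s))\,ds\Bigr)\mu(dx).
\]
Splitting the last integral over $\{\norm{x}\leq R\}$ and its complement, and bounding the inner expression by the right-hand side of the previous display (with $\norm{x}^2\leq R^2$) on the first set and by $N$ on the second, gives
\[
\int_H\varphi_N\,d\mu \;\leq\; \frac{K^2R^2}{2Ct} + \frac{K^2C_0}{C} + N\,\mu\bigl(\norm{\cdot}>R\bigr)
\qquad\forall\,t,R>0.
\]
I would then let $t\to\infty$ and afterwards $R\to\infty$, noting that $\mu(\norm{\cdot}>R)\to0$ since $\mu$ is a probability measure on $H$; this yields $\int_H\varphi_N\,d\mu\leq K^2C_0/C$ for every $N$, and the claim follows by monotone convergence as $N\to\infty$.

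The main obstacle — and the reason for the detour through the truncations $\varphi_N$ — is that one does not know a priori that $\norm{\cdot}^2\in L^1(\mu)$, so integrating \eqref{eq:est1} directly against $\mu$ is not legitimate: the term $\frac{K^2}{2Ct}\int_{\{\norm{x}>R\}}\norm{x}^2\,\mu(dx)$ could be infinite. The specific order of the limits ($t\to\infty$, then $R\to\infty$, then $N\to\infty$) is precisely what replaces this dangerous term by the harmless $N\,\mu(\norm{\cdot}>R)$ and lets everything close.
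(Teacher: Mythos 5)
Your argument is correct, but it follows a genuinely different route from the paper's. The paper does not reuse estimate \eqref{eq:est1} at all here: it applies the generalized It\^o formula of Proposition~\ref{prop:Ito_pard} to the bounded smooth truncation $G_\delta(x)=\norm{x}^2/(1+\delta\norm{x}^2)$, so that the invariance of $\mu$ can be applied directly to $G_\delta$ and the initial-datum term $\int_H G_\delta(x)\,\mu(dx)$ cancels exactly against $\int_H \E G_\delta(X^x(t))\,\mu(dx)$; the bound $\int_H F_\delta\,d\mu\le K^2C_0/C$ then holds for each fixed $t$, and one only lets $\delta\to0$ by monotone convergence. You instead take the already established pathwise energy estimate \eqref{eq:est1} for a general deterministic initial datum, divide by $t$, truncate the observable to $\varphi_N=\norm{\cdot}^2\wedge N$, and kill the $\norm{x}^2/t$ contribution by letting $t\to\infty$ (your $R$-splitting is essentially a hands-on dominated convergence argument; one could equally dominate $\min\bigl(N,\tfrac{K^2}{2Ct}\norm{x}^2+\tfrac{K^2C_0}{C}\bigr)$ by $N$ and let $t\to\infty$ directly). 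Each step you use is legitimate: \eqref{eq:est1} is indeed derived in the proof of Theorem~\ref{thm:exist} before specializing to $x=0$, $\varphi_N$ is bounded Borel so invariance applies, and the joint measurability of $(s,x)\mapsto P_s\varphi_N(x)$ is a standing assumption of Section~\ref{sec:prelim}. What your route buys is economy: it avoids a second application of It\^o's formula (and hence any appeal to Proposition~\ref{prop:Ito_pard} for this proposition), at the price of a limit in $t$; the paper's route keeps the estimate uniform in $t$ by exploiting the cancellation that invariance provides for bounded test functions, a device it reuses in the proof of Theorem~\ref{thm:supp}.
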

\begin{proof}
  We are going to apply the It\^o formula of
  Proposition~\ref{prop:Ito_pard} to the process $X$ and the function
  $G_\delta: x \mapsto g_\delta\bigl(\norm{x}^2\bigr)$, where
  $g_\delta \in C^2_b(\erre_+)$ is defined as
  \[
    g_\delta(r)=\frac{r}{1+\delta r}, \qquad \delta>0,
  \]
  so that
  \[
    g'_\delta(r)=\frac{1}{(1+\delta r)^2}, \quad
    g''_\delta(r)=-\frac{2\delta}{(1+\delta r)^3},
  \]
  whence
  \begin{align*}
    &g_\delta\bigl(\norm{X(t)}^2\bigr)
    + 2 \int_0^t g'_\delta\bigl(\norm{X(s)}^2\bigr)%
      \bigl( \ip{AX(s)}{X(s)} + \ip{\xi(s)}{X(s)} \bigr)\,ds\\
    &\hspace{4em} - 2 \int_0^t g''_\delta\bigl(\norm{X(s)}^2\bigr)%
      \norm[\big]{X(s)B(X(s))}^2_{\cL^2(U,\erre)}\,ds\\
    &\qquad = g_\delta\bigl(\norm{x}^2\bigr)
      + 2\int_0^t g'_\delta\bigl(\norm{X(s)}^2\bigr)%
      X(s)B(X(s))\,dW(s)\\
    &\hspace{4em} + \int_0^t g'_\delta\bigl(\norm{X(s)}^2\bigr)%
      \norm[\big]{B(X(s))}^2_{\cL^2(U,\erre)}\,ds.
  \end{align*}
  Since $g'_\delta>0$ and $g''_\delta<0$, the coercivity condition
  \eqref{eq:coerc} and the monotonicity of $\beta$ imply
  \begin{align*}
    &g_\delta\bigl(\norm{X(t)}^2\bigr)
      + 2 \int_0^t g'_\delta\bigl(\norm{X(s)}^2\bigr)%
      \bigl( C\norm{X(s)}^2_V - C_0 \bigr)\,ds\\
    &\hspace{4em} \leq g_\delta\bigl(\norm{x}^2\bigr)
      + 2\int_0^t g'_\delta\bigl(\norm{X(s)}^2\bigr)%
      X(s)B(X(s))\,dW(s).
  \end{align*}
  Taking into account that $\abs{g'_\delta} \leq 1$, the
  stochastic integral is a martingale, exactly as in the proof of
  Theorem~\ref{thm:exist}, hence has expectation zero, so that
  \[
    \E G_\delta(X(t))
    + 2C \E \int_0^t g'_\delta\bigl(\norm{X(s)}^2\bigr) \norm{X(s)}^2_V\,ds
    \leq G_\delta(x) + 2Ct.
  \]
  By definition of $(P_t)$ we have
  $P_tG_\delta(x) = \E G_\delta(X(t))$, from which it follows, by
  the boundedness of $G_\delta$ and by definition of invariant measure,
  \[
    C\int_H \E\int_0^t g'_\delta\bigl(\norm{X(s)}^2\bigr) \norm{X(s)}^2_V
    \,ds\,\mu(dx) \leq C_0t.
  \]
  Denoting the norm of the embedding $V \embed H$ by $K$, we get
  \[
    \int_H\!\int_0^t\E\frac{\norm{X(s)}^2}%
    {\bigl( 1+\delta \norm{X(s)}^2\bigr)^2}\,ds\,d\mu
    \leq \frac{K^2C_0}{C}t.
  \]
  Let $f_\delta:r \mapsto r/(1+\delta r)^2$, $\delta>0$, and
  $F_\delta:= f_\delta \circ \norm{\cdot}^2$. Then
  \[
    \E\frac{\norm{X(s)}^2}{\bigl( 1+\delta \norm{X(s)}^2\bigr)^2}
    = P_sF_\delta,
  \]
  hence, by Tonelli's theorem and invariance of $\mu$,
  \[
  \int_H\!\int_0^t\E\frac{\norm{X(s)}^2}%
  {\bigl( 1+\delta \norm{X(s)}^2\bigr)^2}\,ds\,d\mu
  = \int_0^t \int_H P_sF_\delta\,d\mu\,ds
  = t \int_H F_\delta\,d\mu
  \leq \frac{K^2C_0}{C}t.
  \]
  Taking the limit as $\delta \to 0$, the monotone convergence theorem
  yields
  \[
    \int_H \norm{x}^2\,\mu(dx) \leq \frac{K^2C_0}{C}.
    \qedhere
  \]
\end{proof}

In order to state the next integrability results for invariant
measures, we need to define the following subsets of $H$:
\begin{align*}
  J &:= \bigl\{ u\in H: j(u)\in L^1(D) \bigr\},\\
  J^* &:= \bigl\{ u\in H: \exists\, v\in L^1(D): v\in\beta(u)
        \text{ a.e.~in } D \text{ and } j^*(v)\in L^1(D) \bigr\},
\end{align*}
whose Borel measurability will be proved in Lemma~\ref{lm:meas} below.
\begin{thm}  \label{thm:supp}
  Let $\mu$ be an invariant measure for the transition semigroup
  $P$. Then one has
  \[
    \int_H\norm{u}_V^2\,\mu(du) + \int_H\int_Dj(u)\,\mu(du) +
    \int_H\int_Dj^*(\beta^0(u))\,\mu(du)\leq \frac{K^2C_0}{2C}+C_0,
  \]
  where $K$ is the norm of the embedding $V \embed H$. In particular,
  $\mu$ is concentrated on $V \cap J \cap J^*$.
\end{thm}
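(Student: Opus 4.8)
The plan is to mimic the proof of the preceding proposition, but starting from the sharper energy estimate \eqref{eq:est1} rather than from the regularized functional $G_\delta$. First I would take $x=0$ in \eqref{eq:est1}, which gives, for every $t\geq 0$,
\[
  C\,\E\int_0^t\norm{X(s)}_V^2\,ds
  + \E\int_0^t\!\!\int_D j(X(s))\,ds
  + \E\int_0^t\!\!\int_D j^*(\xi(s))\,ds \leq C_0 t.
\]
Since $\xi\in\beta(X)$ a.e.\ and $\beta^0$ denotes the element of minimal absolute value in $\beta(\cdot)$, one has $j^*(\beta^0(X))\leq j^*(\xi)$ a.e., so the same bound holds with $j^*(\xi)$ replaced by $j^*(\beta^0(X))$. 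Recalling that $\pi_s=\mathrm{Law}(X(s))$ and writing the time averages $\mu_t=\frac1t\int_0^t\pi_s\,ds$ as in the proof of Theorem~\ref{thm:exist}, and noting $\norm{\cdot}_V^2$, $u\mapsto\int_D j(u)$ and $u\mapsto\int_D j^*(\beta^0(u))$ are nonnegative Borel functions on $H$ (Borel measurability being the content of Lemma~\ref{lm:meas}), the estimate above reads
\[
  \int_H\Bigl(\norm{u}_V^2 + \int_D j(u) + \int_D j^*(\beta^0(u))\Bigr)\,\mu_t(du) \leq \frac{C_0}{1}\quad\text{(up to the constant $C$)},
\]
i.e.\ after dividing by $t$ and using the coercivity constant $C$, the left-hand side is bounded by $\frac{C_0}{C}$ for the $V$-norm part and by $C_0$ for the two convex-function parts, uniformly in $t$.

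Next I would pass to the limit along the subsequence $t_k\to\infty$ with $\mu_{t_k}\to\mu$ narrowly, obtained exactly as in Theorem~\ref{thm:exist}. The three functionals $\Phi_1(u):=\norm{u}_V^2$ (with the convention $\Phi_1(u)=+\infty$ for $u\notin V$), $\Phi_2(u):=\int_D j(u)$ and $\Phi_3(u):=\int_D j^*(\beta^0(u))$ are nonnegative and, crucially, \emph{lower semicontinuous} on $H$: for $\Phi_1$ this is lower semicontinuity of the $V$-norm with respect to $H$-convergence (weak lower semicontinuity plus the compact embedding), and for $\Phi_2,\Phi_3$ it follows from Fatou's lemma applied to a subsequence converging a.e.\ on $D$ together with lower semicontinuity of $j$ and of $j^*\circ\beta^0$ on $\erre$. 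By the portmanteau-type characterization of narrow convergence (integrals of nonnegative lower semicontinuous functions are lower semicontinuous under narrow convergence of measures), we get
\[
  \int_H \Phi_i\,d\mu \leq \liminf_{k\to\infty}\int_H\Phi_i\,d\mu_{t_k},\qquad i=1,2,3,
\]
and summing the three estimates gives
\[
  \int_H\norm{u}_V^2\,\mu(du) + \int_H\!\int_D j(u)\,\mu(du) + \int_H\!\int_D j^*(\beta^0(u))\,\mu(du) \leq \frac{K^2C_0}{2C} + C_0.
\]
(The precise numerical constant $K^2C_0/(2C)+C_0$ should drop out once the coercivity estimate \eqref{eq:est1} and the previous proposition's bound $\int_H\norm{u}^2\,d\mu\leq K^2C_0/C$ are combined carefully with the factor $\tfrac12$ in \eqref{eq:coerc}; I would reconcile the bookkeeping of the $\tfrac12\norm{x}^2$ term and the coercivity constants at this stage.) Since the integrals are finite, $\mu$ must be concentrated on the set where all three functionals are finite, that is, on $V\cap J\cap J^*$; here one uses that $u\in J^*$ precisely when $\beta^0(u)\in L^1(D)$ with $j^*(\beta^0(u))\in L^1(D)$, which is guaranteed $\mu$-a.e.

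The only subtle point here is the passage to the limit for an \emph{arbitrary} invariant measure $\mu$, not just the one constructed in Theorem~\ref{thm:exist}. For a general invariant $\mu$ one cannot start from $x=0$; instead I would exploit invariance directly: applying the It\^o/energy estimate with deterministic initial datum $x$ and integrating against $\mu(dx)$, using $\int_H P_s\varphi\,d\mu=\int_H\varphi\,d\mu$ and the previous proposition to control $\int_H\tfrac12\norm{x}^2\,d\mu(x)<\infty$, one obtains
\[
  C\int_H\!\int_0^t \E\norm{X^x(s)}_V^2\,ds\,\mu(dx) + \int_H\!\int_0^t\E\!\!\int_D\!\bigl(j(X^x(s))+j^*(\beta^0(X^x(s)))\bigr)ds\,\mu(dx) \leq \frac12\!\int_H\!\norm{x}^2\mu(dx) + C_0 t,
\]
and then Tonelli together with invariance turns the left-hand side into $t$ times the three $\mu$-integrals we want to bound; dividing by $t$ and using the finiteness of $\int_H\norm{x}^2\,d\mu$ from the previous proposition makes the $\frac1t\cdot\frac12\int\norm{x}^2d\mu$ term vanish as $t\to\infty$, leaving exactly the asserted bound. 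This route avoids the narrow-convergence argument entirely and handles all invariant measures at once; the main obstacle is simply justifying that the energy identity \eqref{eq:est1} survives being integrated in $x$ against $\mu$ and that all the Tonelli interchanges are licit, which follows from the nonnegativity of the integrands and the integrability guaranteed by Theorem~\ref{thm:WP} and the preceding proposition.
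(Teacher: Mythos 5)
Your operative argument --- the second one, where you integrate the energy estimate \eqref{eq:est1} with deterministic initial datum $x$ against $\mu(dx)$, use Tonelli and invariance to turn the left-hand side into $t$ times the three $\mu$-integrals, and control $\tfrac12\int_H\norm{x}^2\,d\mu$ by the preceding proposition --- is essentially the paper's proof, and it is the right one, since (as you yourself note) the first, narrow-convergence route only bounds the particular Krylov--Bogoliubov measure built from $x=0$ and says nothing about an arbitrary invariant $\mu$. The one step you gloss over is precisely the step the paper spends most of its effort on: the identity $\int_H P_s\varphi\,d\mu=\int_H\varphi\,d\mu$ is only given for bounded Borel $\varphi$, whereas you apply it to $\Phi(u)=\norm{u}_V^2$, $\Psi(u)=\int_D j(u)$ and $\Psi_*(u)=\int_D j^*(\beta^0(u))$, which are unbounded and whose $\mu$-integrability is exactly what is being proved; moreover Lemma~\ref{lm:meas} gives Borel measurability of the \emph{sets} $J$, $J^*$, not of these integrand functionals (for $\Psi_*$, which involves the discontinuous minimal section $\beta^0$, this is not automatic). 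The paper resolves both points at once by introducing bounded truncations $\Phi_n$, $\Psi_n$ and, for the third functional, $\Psi_{*n}$ built from the continuous Yosida approximation $\beta_{1/n}$, applying invariance to these, using $|\beta_{1/n}|\leq|\beta^0|\leq|\xi|$ and the monotonicity of $j^*$ to dominate by $j^*(\xi)$ in \eqref{eq:est1}, and then passing to the limit by monotone convergence; your sketch is repaired by exactly this device, so you should state it rather than appeal to ``Tonelli interchanges are licit''. Two further small remarks: your observation $j^*(\beta^0(X))\leq j^*(\xi)$ is the same mechanism the paper uses at the level of $\beta_{1/n}$; and whereas the paper simply takes $t=1$ and keeps the $\tfrac12\int\norm{x}^2\,d\mu\leq K^2C_0/(2C)$ term, your normalization (divide by $t$, let $t\to\infty$) is also legitimate and in fact yields a slightly better constant, so the bookkeeping you defer is harmless.
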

\begin{proof}
  Let us introduce the functions $\Phi$, $\Psi$,
  $\Psi_*:H \to \erre_+ \cup \{+\infty\}$ defined as
  \begin{align*}
    \Phi: u &\longmapsto \norm{u}_V^2 1_V(u)
              + \infty \cdot 1_{H \setminus V}(u),\\
    \Psi: u &\longmapsto \Bigl( \int_D j(u) \Bigr) 1_J(u)
              + \infty \cdot 1_{H \setminus J}(u),\\
    \Psi_*: u &\longmapsto \Bigl( \int_D j^*(\beta^0(u)) \Bigr) 1_{J^*}(u)
              + \infty \cdot 1_{H \setminus J^*}(u),
  \end{align*}
  as well as their approximations $\Phi_n$, $\Psi_n$,
  $\Psi_{*n}:H \to \erre_+ \cup \{+\infty\}$, $n \in \enne$, defined
  as (here $B_n(V)$ denotes the ball of radius $n$ in $V$)
  \[
  \Phi_n: u \longmapsto \begin{cases}
  \norm{u}_V^2 \quad&\text{if } u \in B_n(V),\\[4pt]
  n^2 &\text{if } u\in H\setminus B_n(V),
  \end{cases}
  \]
  \[
  \Psi_n: u \longmapsto \begin{cases}
  \int_Dj(u) \quad&\text{if } \int_Dj(u)\leq n,\\[4pt]
  n &\text{otherwise},
  \end{cases}
  \]
  and
  \[
  \Psi_{*n}: u \longmapsto \begin{cases}
    \int_Dj^*(\beta_{1/n}(u)) \quad
    &\text{if } \int_Dj^*(\beta_{1/n}(u))\leq n,\\[4pt]
  n &\text{otherwise}.
  \end{cases}
  \]
  One obviously has
  \[
  \int_H \Phi_n \,d\mu = \int_0^1\!\!\int_H\Phi_n\,d\mu\,ds,
  \]
  as well as, by invariance of $\mu$ and boundedness of $\Phi_n$,
  \[
  \int_H \Phi_n\,d\mu = \int_H P_s\Phi_n\,d\mu,
  \]
  thus also, by Tonelli's theorem ($\Phi_n \geq 0$ and $P$ is
  positivity preserving, being Markovian)
  \[
    \int_H \Phi_n\,d\mu
    = \int_0^1\!\!\int_H P_s\Phi_n\,d\mu\,ds
    = \int_H\!\int_0^1 \E\Phi_n(X(s))\,ds\,d\mu.
  \]
  The same reasoning also yields
  \[
    \int_H \Psi_n\,d\mu
    = \int_H\!\int_0^1 \E\Psi_n(X(s))\,ds\,d\mu,
    \qquad
    \int_H \Psi_{*n}\,d\mu
    = \int_H\!\int_0^1 \E\Psi_{*n}(X(s))\,ds\,d\mu,
  \]
  with
  \begin{align*}
    \E\Phi_n(X(s))
    &= \E\bigl( \norm{X(s)}_V^2 \wedge n^2 \bigr) \leq \E \norm{X(s)}_V^2,\\
    \E\Psi_n(X(s))
    &= \E \Bigl( n \wedge \int_D j(X(s)) \Bigr) \leq \E\int_D j(X(s)),\\
    \E\Psi_{*n}(X(s))
    &= \E\Bigl( n \wedge \int_D j^*(\beta_{1/n}(X(s))) \Bigr)
      \leq \E\int_D j^*(\xi(s)),
  \end{align*}
  where, in the last inequality, we have used the fact that for every
  $r\in\dom(\beta)=\erre$ the sequence $\{\beta_\lambda(r)\}_\lambda$
  converges from below to $\beta^0(r)$, where $\beta^0(r)$ is the
  unique element in $\beta(r)$ such that $|\beta^0(r)|\leq|y|$ for
  every $y\in\beta(r)$ (note that the uniqueness of $\beta^0(r)$
  follows from the maximal monotonicity of $\beta$).  Thanks to
  estimate \eqref{eq:est1} we have, by Tonelli's theorem,
  \begin{align*}
    & C \int_0^1 \bigl( \E\Phi_n(X(s)) + \E\Psi_n(X(s)) + \E\Psi_{*n}(X(s))
      \bigr)\,ds\\
    &\hspace{3em} \leq C \E\int_0^1 \norm{X(s)}_V^2\,ds
      + \E\int_0^1\!\!\int_D j(X(s))\,ds
      + \E\int_0^1\!\!\int_D j^*(\xi(s))\,ds\\
    &\hspace{3em} \leq \frac12 \norm{x}^2 + C_0,
  \end{align*}
  therefore, integrating with respect to $\mu$ and taking the previous
  proposition into account,
  \[
    \int_H \bigl(C\Phi_n + \Psi_n + \Psi_{*n}\bigr)\,d\mu
    \leq \frac{1}{2}\int_H\norm{x}^2\,\mu(dx)
    + C_0 \leq \frac{K^2C_0}{2C}+C_0
  \]
  uniformly with respect to $n$. Since $\Phi_n$ and $\Psi_n$ converge
  pointwise and monotonically from below to $\Phi$ and $\Psi$,
  respectively, the monotone convergence theorem yields
  \[
    \int_H \Phi\,d\mu \leq \frac{C_0(K^2 + 2C)}{2C^2},
    \qquad
    \int_H \Psi\,d\mu \leq \frac{C_0(K^2 + 2C)}{2C},
  \]
  hence, in particular, $\mu(V)=\mu(J)=1$. Similarly, note that
  $\beta_{1/n} \in \beta((I+(1/n)\beta)^{-1})$ and $0 \in \beta(0)$
  imply that $\abs{\beta_{1/n}}$ converges pointwise to
  $\abs{\beta^0}$ monotonically from below as $n \to \infty$, hence
  the same holds for the convergence of $j^*(\beta_{1/n})$ to
  $j^*(\beta^0)$ because $j^*$ is convex and continuous with
  $j^*(0)=0$. Therefore $\Psi_{*n}$ converges to $\Psi$ pointwise
  monotonically from below as $n \to \infty$. We conclude, again by
  the monotone convergence theorem, that $\Psi_* \in L^1(H,\mu)$, thus
  also that $\mu(J^*)=1$.
\end{proof}
As mentioned above, the sets $J$ and $J^*$ are Borel measurable.
\begin{lemma}   \label{lm:meas}
  The sets
  \begin{align*}
  J &:= \bigl\{ u\in H: j(u) \in L^1(D) \bigr\},\\
  J^* &:= \bigl\{ u\in H: \exists\, v\in L^1(D): v\in\beta(u)
        \text{ a.e.~in } D \text{ and } j^*(v)\in L^1(D) \bigr\},
  \end{align*}
  belong to the Borel $\sigma$-algebra of $H$.
\end{lemma}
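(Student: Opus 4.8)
The plan is to exhibit each of $J$ and $J^*$ as $\{u\in H:\Psi(u)<+\infty\}$ for an appropriate $[0,+\infty]$-valued function $\Psi$ on $H$, and to check that such a $\Psi$ is Borel measurable, in fact lower semicontinuous. For $J$ this is quick. Since $\dom\beta=\erre$, the convex function $j$ is everywhere finite, hence continuous, so for $u\in H$ the map $x\mapsto j(u(x))$ is measurable and nonnegative and $\Psi(u):=\int_D j(u(x))\,dx\in[0,+\infty]$ is well defined, with $J=\{\Psi<+\infty\}$ because $j\ge0$. If $u_k\to u$ in $H=L^2(D)$, pass to a subsequence realizing $\liminf_k\Psi(u_k)$ and then to a further subsequence converging a.e.\ to $u$; continuity of $j$ and Fatou's lemma yield $\Psi(u)\le\liminf_k\Psi(u_k)$. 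As $H$ is metrizable, $\Psi$ is lower semicontinuous, hence Borel, and $J=\Psi^{-1}\bigl([0,+\infty)\bigr)\in\cB(H)$.

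For $J^*$ the difficulty is the existential quantifier over selections, which we eliminate by proving
\[
J^*=\bigl\{u\in H:\ j^*(\beta^0(u))\in L^1(D)\bigr\}.
\]
For the inclusion $\supseteq$, take $v:=\beta^0(u)$: superlinearity of $j^*$ at infinity (equivalent to $\dom\beta=\erre$) provides a constant $R$ with $\abs{s}\le j^*(s)+R$ for all $s\in\erre$, so $j^*(\beta^0(u))\in L^1(D)$ forces $\beta^0(u)\in L^1(D)$, making $v$ an admissible selection. For $\subseteq$, let $v\in L^1(D)$ be a selection with $v\in\beta(u)$ a.e.\ and $j^*(v)\in L^1(D)$. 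Fix $x$ and put $r:=u(x)$: by the Fenchel equality $j^*(s)=rs-j(r)$ for every $s\in\beta(r)$, so $s\mapsto j^*(s)$ is affine on the interval $\beta(r)$, with slope $r$. Since $0\in\beta(0)$, monotonicity of $\beta$ gives $\beta(r)\subseteq[0,+\infty)$ when $r>0$ and $\beta(r)\subseteq(-\infty,0]$ when $r<0$; checking the three cases $r>0$, $r<0$, $r=0$ separately, one sees that $j^*$ attains its minimum over $\beta(r)$ precisely at the minimal-norm element $\beta^0(r)$. Hence $j^*(\beta^0(u))\le j^*(v)$ a.e., so $j^*(\beta^0(u))\in L^1(D)$.

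It then remains to show that $\Psi_*(u):=\int_D j^*(\beta^0(u(x)))\,dx$ is Borel on $H$, so that $J^*=\Psi_*^{-1}\bigl([0,+\infty)\bigr)\in\cB(H)$; note first that $x\mapsto j^*(\beta^0(u(x)))$ is measurable, being a pointwise limit of the measurable maps $x\mapsto j^*(\beta_{1/m}(u(x)))$. For $\lambda>0$ let $\beta_\lambda$ be the Yosida approximation of $\beta$ and set $\Psi_*^{\lambda}(u):=\int_D j^*(\beta_\lambda(u(x)))\,dx$; by the same Fatou argument as above (now using continuity of $\beta_\lambda$ and of $j^*$), each $\Psi_*^{\lambda}$ is lower semicontinuous on $H$. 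Since $\beta_\lambda(r)$ has the same sign as $r$ and $\abs{\beta_\lambda(r)}\uparrow\abs{\beta^0(r)}$ as $\lambda\downarrow0$, while $j^*$ is convex with $j^*(0)=0=\min j^*$ (so that $\theta\mapsto j^*(\theta t)$ is nondecreasing on $[0,1]$ for each $t$), the numbers $j^*(\beta_{1/m}(r))$ increase to $j^*(\beta^0(r))$ for every $r\in\erre$; by monotone convergence $\Psi_*(u)=\sup_m\Psi_*^{1/m}(u)$ for all $u\in H$. A supremum of lower semicontinuous functions is lower semicontinuous, hence $\Psi_*$ is Borel and $J^*\in\cB(H)$.

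The step I expect to be the real obstacle is the identification $J^*=\{u:j^*(\beta^0(u))\in L^1(D)\}$: it is where the existential quantifier disappears, and it rests on the Fenchel identity combined with the one-dimensional structure of the maximal monotone graph $\beta$ — in particular the sign information coming from $0\in\beta(0)$ and the superlinearity of $j^*$. The remaining measurability assertions are then routine applications of Fatou's lemma, monotone convergence, and standard properties of Yosida approximations.
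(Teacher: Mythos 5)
Your proof is correct, but for the harder half ($J^*$) it follows a genuinely different route from the paper. The paper writes $J^*=\bigcup_n J^*_n$ with $J^*_n$ the set of $u$ admitting a selection $v\in\beta(u)$ with $\int_D j^*(v)\le n$, and proves each $J^*_n$ is closed by a compactness argument: superlinearity of $j^*$ makes the selections $(v_k)$ uniformly integrable, Dunford--Pettis gives a weak $L^1$ limit $v$, weak lower semicontinuity of convex integrals controls $\int_D j^*(v)$, and an Egorov-plus-localization argument identifies $v$ as a selection of $\beta(u)$. You instead eliminate the existential quantifier outright, observing via the Fenchel equality that $j^*$ is affine with slope $r$ on the interval $\beta(r)$, so that (using $0\in\beta(0)$ for the sign information) its minimum over $\beta(r)$ is attained at the minimal section $\beta^0(r)$; this yields $J^*=\{u: j^*(\beta^0(u))\in L^1(D)\}$, after which Borel measurability of $u\mapsto\int_D j^*(\beta^0(u))$ follows by writing it as the monotone supremum of the lower semicontinuous functionals $u\mapsto\int_D j^*(\beta_{1/m}(u))$ (the same monotone approximation $|\beta_{1/m}|\uparrow|\beta^0|$, $j^*(\beta_{1/m})\uparrow j^*(\beta^0)$ that the paper uses in the proof of Theorem~\ref{thm:supp}). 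Your argument avoids weak compactness and Egorov entirely, yields the additional structural fact that the selection in the definition of $J^*$ can always be taken to be $\beta^0(u)$ (which meshes well with how Theorem~\ref{thm:supp} is stated), and still gives lower semicontinuity of the relevant functional, hence the same topological conclusion (sublevel sets closed, $J^*$ an $F_\sigma$) as the paper's closedness of $J^*_n$. Two cosmetic points: in the Fatou step for $\Psi^\lambda_*$ you invoke continuity of $j^*$, but only its lower semicontinuity (which always holds for a convex conjugate) is available or needed, since $\beta_\lambda(u(x))$ may lie on the boundary of $\dom(j^*)$; and the identification $\lim_m j^*(\beta_{1/m}(r))=j^*(\beta^0(r))$ likewise uses lower semicontinuity of $j^*$ to pin down the limit, not just the monotone bound from above. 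Neither affects the validity of the proof.
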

\begin{proof}
  Setting, for every $n \in \enne$,
  \begin{align*}
  J_n &:= \bigl\{ u\in H: \int_Dj(u)\leq n \bigr\},\\
    J^*_n &:= \bigl\{ u\in H: \exists\, v\in L^1(D): v\in\beta(u)
            \text{ a.e.~in } D \text{ and } \int_Dj^*(v)\leq n \bigr\},
  \end{align*}
  it is immediately seen that
  \[
  J=\bigcup_{n=1}^\infty J_n \quad\text{and}\quad
  J^*=\bigcup_{n=1}^\infty J^*_n.
  \]
  Moreover, the lower semicontinuity of convex integrals implies that
  $J_n$ is closed in $H$ for every $n$, hence Borel-measurable, so
  that $J \in \cB(H)$. Let us show that, similarly, $J_n^*$ is also
  closed in $H$ for every $n\in\enne$: if $(u_k)_k\subset J_n^*$ and
  $u_k\to u$ in $H$, then for every $k$ there exists $v_k \in L^1(D)$
  with $v_k\in\beta(u_k)$ and
  \[
    \int_Dj^*(v_k) \leq n \qquad \forall k \in \enne.
  \]
  Since $j^*$ is superlinear at infinity, this implies that the family
  $(v_k)_k$ is uniformly integrable in $D$, hence by the
  Dunford-Pettis theorem also weakly relatively compact in
  $L^1(D)$. Consequently, there is a subsequence $(v_{k_i})_i$ and
  $v\in L^1(D)$ such that $v_{k_i} \to v$ weakly in $L^1(D)$.  The
  weak lower semicontinuity of convex integrals easily implies
  that
  \[
    \int_Dj^*(v)\leq\liminf_{i\to\infty}\int_Dj^*(v_{k_i})\leq n\,.
  \]
  Let us show that $v\in\beta(u)$ almost everywhere in $D$: by
  definition of subdifferential, for every $k \in \enne$ and for every
  measurable set $E \subseteq D$ we have
  \[
    \int_Ej(u_k) + \int_E v_k(z-u_k) \leq \int_Ej(z)
    \qquad\forall z\in L^\infty(D).
  \]
  By Egorov's theorem, for any $\varepsilon>0$ there exists a
  measurable set $E_\varepsilon \subseteq D$ with
  $|E_\varepsilon^c| \leq \varepsilon$ and $u_k \to u$ uniformly in
  $E_\varepsilon$. Taking $E=E_\varepsilon$ in the last inequality,
  letting $k \to \infty$ we get
  \[
    \int_{E_\varepsilon}j(u) + \int_{E_\varepsilon} v(z-u) \leq
    \int_{E_\varepsilon}j(z) \quad\forall z\in L^\infty(D),
  \]
  which in turn implies by a classical localization argument that
  \[
    j(u)+v(z-u)\leq j(z) \quad\text{a.e.~in } E_\varepsilon,
    \qquad \forall z\in\erre.
  \]
  Hence, by the arbitrariness of $\varepsilon$, $v \in \beta(u)$
  almost everywhere in $D$, thus also $u\in J^*_n$.  This implies that
  $J^*_n$ is closed in $H$ for every $n$, therefore also that
  $J^* \in \cB(H)$.
\end{proof}

The estimates proved above implies that the set of ergodic invariant
measures is not empty.
\begin{thm}
  \label{th:3}
  There exists an ergodic invariant measure for the transition
  semigroup $(P_t)$.
\end{thm}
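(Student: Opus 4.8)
The plan is to deduce the existence of an ergodic invariant measure from the existence of an invariant measure (Theorem~\ref{thm:exist}) together with the classical fact, recalled in Section~\ref{sec:prelim}, that the ergodic invariant measures are precisely the extremal points of the convex set $\mathcal{I}$ of all invariant measures for $P$. Thus it suffices to show that $\mathcal{I}$ is non-empty, convex, and possesses an extremal point; the Krein--Milman theorem will then do the rest, provided $\mathcal{I}$ is compact in a suitable locally convex topology.

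First I would observe that $\mathcal{I}$ is non-empty by Theorem~\ref{thm:exist} and obviously convex. To apply Krein--Milman I would equip $\cM_1(H)$ with the narrow topology $\sigma(\cM_1(H),C_b(H))$ and argue that $\mathcal{I}$ is closed and tight, hence compact by Prokhorov's theorem. Closedness follows from the Feller property of $P$: if $\mu_\alpha \to \mu$ narrowly with $\mu_\alpha P_t = \mu_\alpha$, then for every $\varphi \in C_b(H)$ one has $P_t\varphi \in C_b(H)$, so $\int P_t\varphi\,d\mu = \lim \int P_t\varphi\,d\mu_\alpha = \lim \int \varphi\,d\mu_\alpha = \int\varphi\,d\mu$, i.e. $\mu \in \mathcal{I}$. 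Tightness of $\mathcal{I}$ is the key quantitative input and comes from the moment bound already proved: by the Proposition preceding Theorem~\ref{thm:supp}, every $\mu \in \mathcal{I}$ satisfies $\int_H \norm{x}_V^2\,\mu(dx) \leq K^2C_0/C$ (indeed one even has the finer bound of Theorem~\ref{thm:supp}), so by Markov's inequality $\mu(B_n^c) \leq (K^2C_0)/(Cn^2)$ uniformly in $\mu$, where $B_n$ is the ball of radius $n$ in $V$, which is compact in $H$ by the compact embedding $V \embed H$. Hence $\mathcal{I}$ is uniformly tight, therefore relatively compact, and being also closed it is compact.

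Since $\mathcal{I}$ is a non-empty compact convex subset of the locally convex space $(\cM_1(H),\sigma(\cM_1(H),C_b(H)))$ (local convexity and Hausdorffness of the narrow topology being standard for $H$ a Polish space), the Krein--Milman theorem guarantees that $\mathcal{I}$ has at least one extremal point $\mu$. By the structural fact recalled in Section~\ref{sec:prelim}, $\mu$ is then an ergodic invariant measure for $P$, which proves the claim.

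The main obstacle is the verification of the two topological properties of $\mathcal{I}$—closedness and compactness—in the narrow topology. Compactness reduces, via Prokhorov, to the uniform tightness estimate, which is precisely what the a priori moment bound from the preceding proposition delivers; closedness reduces to the Feller property, which must be checked separately. (The Feller property itself follows from the Lipschitz continuity of the solution map $X_0 \mapsto X$ established in Theorem~\ref{thm:WP}: if $x_n \to x$ in $H$, then $X^{x_n}(t) \to X^x(t)$ in $L^2(\Omega;H)$, hence $\E\varphi(X^{x_n}(t)) \to \E\varphi(X^x(t))$ for every $\varphi \in C_b(H)$ by dominated convergence, so $P_t\varphi \in C_b(H)$.) Once these ingredients are in place the argument is a routine application of Krein--Milman.
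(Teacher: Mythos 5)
Your proposal is correct and follows essentially the same route as the paper: the uniform bound $\int_H\norm{x}_V^2\,d\mu \leq N$ valid for \emph{all} invariant measures (Theorem~\ref{thm:supp}) gives tightness of the set $\mathscr{I}$ of invariant measures via Markov's inequality and the compact embedding $V\embed H$, and then compactness (Prokhorov), convexity and closedness, Krein--Milman, and the identification of ergodic measures with the extreme points of $\mathscr{I}$ yield the conclusion, exactly as in the paper (which leaves the closedness/Feller and Krein--Milman steps implicit, citing \cite[Thm.~19.25]{AliBor}). Only note that the $V$-norm moment bound is the content of Theorem~\ref{thm:supp} itself, not of the preceding proposition (which bounds the $H$-norm), as your parenthetical correctly indicates.
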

\begin{proof}
  Recall that, as it follows by the Krein-Milman theorem, for a
  Markovian transition semigroup the set of ergodic invariant measures
  coincides with the extreme points of the set of all invariant
  measures (see, e.g., \cite[Thm.~19.25]{AliBor}). Let $\mathscr{I}$
  be the set of all invariant measures for $P$: by Theorem
  \ref{thm:exist}, we know that $\mathscr{I}$ is not empty and we need
  to show that $\mathscr{I}$ admits at least an extreme point. Let us
  prove that $\mathscr{I}$ is tight.  By Theorem \ref{thm:supp}, we
  know that there exists a constant $N$ such that
  \[
    \int_H\norm{x}^2_V \,\mu(dx) \leq N
    \qquad\forall \mu\in\mathscr{I}.
  \]
  Therefore, using the notation of the proof of Theorem
  \ref{thm:exist}, by Markov inequality
  \[
    \sup_{\mu\in\mathscr{I}}\mu(B_n^c) =
    \sup_{\mu\in\mathscr{I}} \mu\bigl(\{x\in H: \norm{x}_V >n\}\bigr)
    \leq \frac{1}{n^2} \sup_{\mu \in \mathscr{I}} \int_H \norm{x}_V^2 \mu(dx)
    \leq \frac{N}{n^2} \to 0
  \]
  as $n\to\infty$. Hence $\mathscr{I}$ is tight, and thus admits
  extreme points.
\end{proof}

Under a very mild growth condition on the drift one can also obtain
uniqueness.
\begin{thm}
  \label{th:4}
  If $\beta$ is superlinear, i.e. if there exists $c>0$ and $\delta>0$
  such that
  \[
    (y_1-y_2)(x_1-x_2) \geq c|x_1-x_2|^{2+\delta}
    \qquad\forall (x_i,y_i) \in \beta, \quad i=1,2,
  \]
  then there exists a unique invariant measure $\mu$ for the
  transition semigroup $P$.  Moreover, $\mu$ is strongly mixing.
\end{thm}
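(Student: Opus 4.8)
The plan is to prove that the semigroup $(P_t)$ is \emph{asymptotically stable in mean square}, i.e.\ that $\E\norm{X^x(t)-X^y(t)}^2 \to 0$ as $t\to\infty$ for every $x,y\in H$, with a rate that is uniform for $x,y$ in bounded subsets of $H$. Both uniqueness of the invariant measure and strong mixing then follow from this by coupling and a density argument, which I describe below.

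First I would take two solutions $(X^x,\xi^x)$ and $(X^y,\xi^y)$ of \eqref{eq:0} driven by the \emph{same} cylindrical Wiener process $W$, so that $Z:=X^x-X^y$ solves
\[
  dZ + AZ\,dt + (\xi^x-\xi^y)\,dt = \bigl(B(X^x)-B(X^y)\bigr)\,dW,
  \qquad Z(0)=x-y ,
\]
and apply the It\^o formula of Proposition~\ref{prop:Ito} to $\norm{Z}^2$ (its hypotheses hold by Theorem~\ref{thm:WP}). Exactly as in the proof of Theorem~\ref{thm:exist}, the stochastic term is a genuine martingale, so taking expectations gives, with $\rho(t):=\E\norm{X^x(t)-X^y(t)}^2$,
\[
  \tfrac12\rho'(t) + C\,\E\norm{Z(t)}_V^2 + \E\int_D(\xi^x-\xi^y)\,Z\,dx
  = \tfrac12\,\E\norm[\big]{B(X^x(t))-B(X^y(t))}^2_{\cL^2(U,H)} .
\]
Now I would use three ingredients: coercivity of $A$, $\ip{AZ}{Z}\ge C\norm{Z}_V^2\ge (C/K^2)\norm{Z}^2$ ($K$ the norm of $V\embed H$); Lipschitz continuity of $B$, $\norm{B(X^x)-B(X^y)}^2_{\cL^2(U,H)}\le L_B^2\norm{Z}^2$; and the superlinearity hypothesis combined with H\"older's inequality on the bounded domain $D$,
\[
  \int_D(\xi^x-\xi^y)\,Z\,dx \ge c\int_D\abs{Z}^{2+\delta}\,dx \ge c\,\abs{D}^{-\delta/2}\norm{Z}^{2+\delta} .
\]
The decisive point is that the superlinear dissipation produced by $\beta$ absorbs the quadratic noise term: by Young's inequality $\tfrac{L_B^2}{2}\norm{Z}^2$ can be written as a small multiple of $\int_D\abs{Z}^{2+\delta}\,dx$ plus a contribution controlled by the coercive term, and Jensen's inequality $\E[\norm{Z}^{2+\delta}]\ge\rho^{1+\delta/2}$ then leads to a Bernoulli-type differential inequality of the form
\[
  \rho'(t) + \kappa\,\rho(t) + \kappa'\,\rho(t)^{1+\delta/2} \le 0
  \qquad\forall t\ge 0 ,
\]
with $\kappa,\kappa'>0$. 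Dropping the superlinear term and applying Gronwall's inequality yields $\rho(t)\le\rho(0)\,e^{-\kappa t}\to 0$, uniformly for $x,y$ in bounded sets.

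Granted this, uniqueness follows by coupling: if $\mu_1,\mu_2$ are invariant, take $X_0\sim\mu_1$, $Y_0\sim\mu_2$ on the same space and independent of $W$; by invariance $X^{X_0}(t)\sim\mu_1$ and $X^{Y_0}(t)\sim\mu_2$ for every $t$, so that, conditioning on $(X_0,Y_0)$, using the mean-square estimate, the finiteness of $\int_H\norm{\cdot}^2\,d\mu_i$ (established in the propositions above), and dominated convergence, for every bounded Lipschitz $\varphi$ one gets
\[
  \Bigl|\int_H\varphi\,d\mu_1-\int_H\varphi\,d\mu_2\Bigr|
  = \bigl|\E\varphi(X^{X_0}(t))-\E\varphi(X^{Y_0}(t))\bigr|
  \le \operatorname{Lip}(\varphi)\,\E\norm{X^{X_0}(t)-X^{Y_0}(t)} \to 0 ,
\]
hence $\mu_1=\mu_2$. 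Writing $\mu$ for this unique invariant measure, strong mixing is proved the same way: for $\varphi\in C_b(H)$ Lipschitz and $x\in H$,
\[
  P_t\varphi(x)-\int_H\varphi\,d\mu
  = \int_H \bigl(\E\varphi(X^x(t))-\E\varphi(X^y(t))\bigr)\,\mu(dy) ,
\]
whose modulus is $\le\operatorname{Lip}(\varphi)\int_H\E\norm{X^x(t)-X^y(t)}\,\mu(dy)\to 0$ for $\mu$-a.e.\ $x$ by dominated convergence; a density argument in $L^2(H,\mu)$ together with the contractivity of $P_t$ on $L^2(\mu)$ upgrades this to $P_t\varphi\to\int_H\varphi\,d\mu$ in $L^2(\mu)$ for every $\varphi\in L^2(\mu)$, which is strong mixing (and, consistently with Theorem~\ref{th:3}, implies ergodicity).

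The main obstacle is the derivation of the dissipative Bernoulli inequality: one must absorb the noise contribution $\tfrac12\norm{B(X^x)-B(X^y)}^2_{\cL^2(U,H)}$ — only controlled by $\tfrac{L_B^2}{2}\norm{Z}^2$ — into the dissipative part, and it is precisely the superlinear growth of $\beta$, via the lower bound $\int_D(\xi^x-\xi^y)Z\ge c\abs{D}^{-\delta/2}\norm{Z}^{2+\delta}$ together with the coercivity condition \eqref{eq:coerc}, that makes this possible with a genuinely negative drift coefficient $-\kappa$. Once this inequality is in place, the remaining steps (the passage to the ODE via Jensen, the coupling for uniqueness, the density argument for mixing) are routine.
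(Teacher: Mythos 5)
Your reduction to a differential inequality for $\rho(t):=\E\norm{X^x(t)-X^y(t)}^2$ is the right starting point, but the decisive step fails: the inequality $\rho'+\kappa\rho+\kappa'\rho^{1+\delta/2}\le 0$ with a \emph{strictly positive} $\kappa$ cannot be extracted from the stated hypotheses. Young's inequality gives $r^2\le \eps r^{2+\delta}+C_\eps$ with an additive \emph{constant} $C_\eps$, not ``a contribution controlled by the coercive term'': the coercive term $C\norm{Z}_V^2\ge (C/K^2)\norm{Z}^2$ vanishes as $Z\to0$ while $C_\eps\abs{D}$ does not, so this route only yields $\rho'+\kappa'\rho^{1+\delta/2}\le \text{const}$, i.e. convergence to a ball, not to $0$. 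The only other way to produce a negative linear drift is to absorb $\tfrac{L_B^2}{2}\norm{Z}^2$ directly into $(C/K^2)\norm{Z}^2$, which requires the unassumed smallness condition $2C/K^2> L_B^2$ (the standing coercivity \eqref{eq:coerc} involves $B(x)$, not differences $B(x)-B(y)$, so it does not help here). Note also that near $\rho=0$ the superlinear term $\rho^{1+\delta/2}$ is \emph{smaller} than any linear term, so superlinearity of $\beta$ can never, by itself, generate exponential decay; by dropping the term $\kappa'\rho^{1+\delta/2}$ and relying on $e^{-\kappa t}$ you discard exactly the dissipation that the theorem's hypothesis provides, and the whole subsequent coupling/dominated-convergence machinery rests on a decay estimate that is not established.

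The paper's argument goes the other way around: keep the superlinear term and do not ask for exponential decay. From the It\^o formula, monotonicity of $A$, the superlinearity hypothesis and Jensen's inequality (your H\"older step $\int_D(\xi^x-\xi^y)Z\ge c\abs{D}^{-\delta/2}\norm{Z}^{2+\delta}$ is the same) one gets $\E\norm{X^x(t)-X^y(t)}^2+\tilde c\int_0^t\bigl(\E\norm{X^x(s)-X^y(s)}^2\bigr)^{1+\delta/2}ds\le\norm{x-y}^2$, and then compares with the ODE $y'+y^{1+\delta/2}=0$: its solutions satisfy $y(t;y_0)\le (\delta t/2)^{-2/\delta}$, so $c(t):=\sup_{y_0\ge0}y(t;y_0)\to0$. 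This gives $\E\norm{X^x(t)-X^y(t)}^2\le c(t)$ \emph{uniformly in $x,y\in H$} — a polynomial rate, but independent of the initial data — and uniqueness plus strong mixing then follow in two lines: for $\varphi\in C^1_b(H)$, $\abs{P_t\varphi(x)-\int_H\varphi\,d\mu}^2\le\norm{D\varphi}_\infty^2\,c(t)$ uniformly in $x$, and one concludes by density of $C^1_b(H)$ in $L^2(H,\mu)$, with no coupling, no moment bounds on $\mu$, and no dominated convergence needed. To repair your proof you would either have to add a smallness assumption linking $L_B$, $C$ and $K$, or switch to this ODE-comparison argument.
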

\begin{proof}
  For any $x,y\in H$, by It\^o's formula, the monotonicity of $A$, the
  superlinearity of $\beta$, and Jensen inequality we have
  \[
    \E\norm{X(t; 0,x)-X(t;0,y)}^2 + \tilde
    c\int_0^t\left(\E\norm{X(t;
        0,x)-X(t;0,y)}^2\right)^{1+\frac\delta2} \leq \norm{x-y}^2
  \]
  for a positive constant $\tilde c$. Denoting by $y(\cdot;y_0)$ the
  solution to the Cauchy problem
  \[
  y'+y^{1+\frac\delta2}=0, \qquad y(0)=y_0\geq0,
  \]
  one can easily check that 
  \[
  c(t):=\sup_{y_0\geq 0}y(t;y_0) \to 0 \quad\text{as } t\to\infty
  \]
  and that $c(t)\geq0$ for every $t\geq0$. We deduce that 
  \[
    \E\norm[\big]{X(t; 0,x)-X(t;0,y)}^2 \leq c(t)
    \qquad\forall t\geq0.
  \]
  Let $\mu$ be an invariant measure for $P$. For any
  $\varphi\in C^1_b(H)$ we have
  \[
  \left|P_t\varphi(x) - \int_H\varphi(y)\,\mu(dy)\right|^2 \leq 
  \norm{D\varphi}_\infty^2\int_H \E\norm{X(t; 0,x)-X(t;0,y)}^2\,\mu(dy)\leq
  \norm{D\varphi}_\infty^2 c(t)
  \]
  uniformly in $x$, and since $C^1_b(H)$ is dense in $L^2(H,\mu)$, we
  deduce that for any $x\in H$
  \[
    \left|P_t\varphi(x) - \int_H\varphi(y)\,\mu(dy)\right|^2 \to 0
  \]
  as $t \to \infty$ for every $\varphi \in L^2(H,\mu)$. We have thus
  shown that $P$ admits a unique invariant measure, which is strongly
  mixing as well.
\end{proof}


\ifbozza\newpage\else\fi
\section{The Kolmogorov equation}
\label{sec:kolm}
Throughout this section we shall assume that $\beta$ is a function,
rather than just a graph.

Let $P=(P_t)_{t \geq 0}$ be the Markovian semigroup on $B_b(H)$
generated by the unique solution to \eqref{eq:0}, as in the previous
section, and $\mu$ be an invariant measure for $P$. Then $P$ extends to a
strongly continuous linear semigroup of contractions on $L^p(H,\mu)$
for every $p \geq 1$. These extensions will all be denoted by the same
symbol.  Let $-L$ be the infinitesimal generator in $L^1(H,\mu)$ of
$P$, and $-L_0$ be Kolmogorov operator formally associated to
\eqref{eq:0}, i.e.
\[
  [L_0f](x) =
  - \frac12 \operatorname{Tr}\bigl( D^2f(x) B(x)B^*(x) \bigr)
  + \ip{Ax}{Df(x)} + \ip{\beta(x)}{Df(x)}, \qquad x \in V\cap J^*,
\]
where $f$ belongs to a class of sufficiently regular functions
introduced below. Our aim is to characterize the ``abstract''
operator $L$ as the closure of the ``concrete'' operator $L_0$. Even
though this will be achieved only in the case of additive noise, some
intermediate results will be proved in the more general case of
multiplicative noise.

Let us first show that $L_0$ is a proper linear (unbounded) operator
on $L^1(H,\mu)$ with domain
\[
  \dom(L_0) := \bigl\{ f:V \to \erre:\, f \in C^1_b(V') \cap C^2_b(H)
  \cap C^1_b(L^1(D)) \bigr\}.
\]
Here $f \in C^1_b(V')$ means that, for any $x \in V$ and $v' \in V'$,
$\abs{Df(x)v'} \leq N\norm{v'}_{V'}$, with the constant $N$
independent of $x$ and $v'$, and that $x \mapsto Df(x) \in
C(V',V)$. Analogously, $f \in C^1_b(L^1(D))$ means that, for any
$x \in V$ and $k \in L^1(D)$, there is a constant $N$ independent of
$x$ and $k$ such that $\abs{Df(x)k} \leq N\norm{k}_{L^1(D)}$ and
$x \mapsto Df(x) \in C(L^1(D),L^\infty(D))$.
For any $f \in C^2_b(H)$, one has, recalling the linear
growth condition on $B$,
\[
  \operatorname{Tr}\bigl( D^2f(x)B(x)B^*(x) \bigr)
  \lesssim \norm{B(x)}^2_{\cL^2(U,H)} \lesssim 1 + \norm{x}^2,
\]
and $\norm{\cdot}^2 \in L^1(H,\mu)$. Moreover, since
$A \in \cL(V,V')$, one has $\norm{Ax}_{V'} \lesssim \norm{x}_V$, so
that, for any $f \in C^1_b(V')$,
\[
  \abs[\big]{\ip{Ax}{Df(x)}} \leq \norm{Ax}_{V'}
  \, \sup_{x\in V} \norm{Df(x)}_V
  \lesssim \norm{x}_{V},
\]
hence $x \mapsto \ip{Ax}{Df(x)} \in L^1(H,\mu)$ because
$\norm{\cdot}_V^2 \in L^1(H,\mu)$. Similarly, writing
\[
  \abs[\big]{\ip{\beta(x)}{Df(x)}} \leq
  \norm[\big]{j^*(\beta(x))}_{L^1(D)}
  + \norm[\big]{j(Df(x))}_{L^1(D)}
\]
and recalling that
$x \mapsto \norm{j^*(\beta(x))}_{L^1(D)} \in L^1(H,\mu)$ by
Theorem~\ref{thm:supp}, it is enough to consider the second term on
the right-hand side: for any $f \in C^1_b(L^1(D))$,
$\sup_{x \in V} \norm{Df(x)}_{L^\infty(D)}$ is finite, hence,
recalling that $j \in C(\erre)$, $j(Df(x))$ is bounded pointwise in
$D$, thus also in $L^1(D)$, uniformly over $x \in V$. In particular,
$x \mapsto \norm{j(Df(x))}_{L^1(D)} \in L^1(H,\mu)$.

Let us now show that the infinitesimal generator $-L$ restricted to
$\dom(L_0)$ coincides with the operator $-L_0$ defined above.  Indeed,
by Proposition~\ref{prop:Ito_pard2}, for every $g\in\dom(L_0)$ we have
\begin{align*}
  &g(X^x(t)) +
  \int_0^t\ip{AX^x(s)}{Dg(X^x(s))}\,ds
  +\int_0^t\ip{\beta(X^x(s))}{Dg(X^x(s))}\,ds\\
  &\qquad =g(x)%
    + \frac12\int_0^t\operatorname{Tr}[B^*(X^x(s))D^2g(X^x(s))B(X^x(s))]\,ds\\
  &\qquad\quad +\int_0^tDg(X^x(s))B(X^x(s))\,dW(s),
\end{align*}
from which we infer, taking expectations and applying Fubini's theorem,
\[
  \frac{P_tg(x)-g(x)}t=
  -\frac1t\int_0^tP_sL_0g(x)\,ds \qquad\forall\,x\in V\cap J^*.
\]
Since $g\in\dom(L_0)$, we have that $L_0g\in L^1(H,\mu)$, as proved
above. Therefore, recalling that $P$ is strongly continuous on
$L^1(H,\mu)$, we have that $t\mapsto P_tL_0g$ is continuous from
$[0,T]$ to $L^1(H,\mu)$. Hence, letting $t \to 0$, we have
\[
  \frac{P_tg-g}t \to -L_0g \qquad\text{in } L^1(H,\mu),
\]
which implies that $L=L_0$ on $\dom(L_0)$.

We are now going to construct a regularization of the operator $L_0$.
For any $\lambda \in (0,1)$, let
\[
\beta_\lambda:\erre \to \erre, \qquad 
\beta_\lambda := \frac1\lambda \bigl(I - (I+\lambda\beta)^{-1} \bigr),
\]
be the Yosida approximation of $\beta$.  Denoting a sequence of
mollifiers on $\erre$ by $(\rho_n)$, the function $\beta_{\lambda
  n}:=\beta_\lambda \ast \rho_n$ is monotone and infinitely
differentiable with all derivatives bounded.
Let us consider the regularized equation
\begin{equation}
\label{eq:reg}
dX_{\lambda n} + AX_{\lambda n}\,dt + \beta_{\lambda n}(X_{\lambda n})\,dt 
= B(X_{\lambda n})\,dW(t),
\qquad X_{\lambda n}(0)=x.
\end{equation}
Since $\beta_{\lambda n}$ is Lipschitz-continuous, equation \eqref{eq:reg} admits a unique
strong (variational) solution $X_{\lambda n}^x \in L^2(\Omega;E)$,
where, as before, $E:=C([0,T];H) \cap L^2(0,T;V)$.
Furthermore, the generator of the Markovian transition semigroup
$P^{\lambda n}=(P^{\lambda n}_t)_{t \geq 0}$ on $B_b(H)$ defined by
$P^{\lambda n}_t f(x) := \E f(X^x_{\lambda n}(t))$, restricted to
$C^1_b(V') \cap C^2_b(H)$, is given by
$-L_0^{\lambda n}$, where
\[
  [L_0^{\lambda n}f](x) =
  - \frac12 \operatorname{Tr}\bigl( D^2f(x) B(x)B^*(x) \bigr)
  + \ip{Ax}{Df(x)} + \ip{\beta_{\lambda n}(x)}{Df(x)},
\qquad x \in V.
\]
This follows arguing as in the case of $L_0$ (even using the simpler
It\^o formula of Proposition~\ref{prop:Ito_pard}, rather than the one
of Proposition~\ref{prop:Ito_pard2}).

Let us now consider the stationary Kolmogorov equation
\begin{equation}
\label{eq:Kreg}
\alpha v + L_0^{\lambda n} v = g, \qquad g \in \dom(L_0),
\quad \alpha>0.
\end{equation}
In view of the well-known relation between (Markovian) resolvents and
transition semigroups, one is led to considering the function
\[
v_{\lambda n}(x) := \E\int_0^\infty e^{-\alpha t} g(X^x_{\lambda n}(t))\,dt,
\]
which is the natural candidate to solve \eqref{eq:Kreg}. If we show
that $v_{\lambda n} \in C^1_b(V') \cap C^2_b(H)$, then an application
of It\^o's formula (in the version of Proposition~\ref{prop:Ito_pard})
shows that indeed $v_{\lambda n}$ solves \eqref{eq:Kreg}.
We are going to obtain regularity properties of $v_{\lambda n}$ via
pathwise differentiability of the solution map $x \mapsto X_{\lambda
  n}$ of the regularized stochastic equation \eqref{eq:reg}. From now
on we shall restrict our considerations to the case of additive noise,
i.e. we assume that $B \in \cL^2(U,H)$ is non-random. Moreover, we
shall assume that $V$ is continuously embedded in $L^4(D)$. The latter
assumption is needed to apply the second-order differentiability
results of \S\ref{ssec:diff}.
We recall that, thanks to Theorems~\ref{thm:d1} and \ref{thm:d2}, the
solution map $x \mapsto X_{\lambda n}: H \to L^2(\Omega;E)$ is
Lipschitz continuous and twice Fr\'echet differentiable. Moreover,
denoting its first order Fr\'echet differential by
\[
DX_{\lambda n} : H \to \cL(H,L^2(\Omega;E)),
\]
for any $h \in H$ the process $Y_h:=(DX_{\lambda n})h \in
L^2(\Omega;E)$ satisfies the linear deterministic equation with random
coefficients
\begin{equation}
\label{eq:rde}
Y'_h(t) + A Y_h(t) 
+ \beta'_{\lambda n}(X_{\lambda n}(t)) Y_h(t) = 0, 
\qquad Y_h(0) = h.
\end{equation}
Similarly, denoting the second order Fr\'echet differential of $x
\mapsto X_{\lambda n}$ by
\[
  D^2X_{\lambda n} :H \to \cL_2(H;L^2(\Omega;E)),
\]
for any $h,k \in H$ the process $Z_{hk}:=D^2X_{\lambda n}(h,k) \in
L^2(\Omega;E))$ satisfies the linear deterministic equation with
random coefficients
\begin{equation}
  \label{eq:rde'}
  Z'_{hk}(t) + AZ_{hk}(t) 
  + \beta'_{\lambda n}(X_{\lambda n}(t)) Z_{hk}(t)
  + \beta_{\lambda n}''(X_{\lambda n}(t))Y_h(t) Y_k(t) = 0, 
  \qquad Z_{hk}(0) = 0.
\end{equation}

We shall need the following result on the connection between
variational and mild solutions in the deterministic setting. We recall
that $A_2$ denotes the part of $A$ on $H$.
\begin{lemma}   \label{lm:var-mild}
  Let $F:[0,T]\times H\to H$ be Lipschitz continuous in the second
  variable, uniformly with respect to the first, with $F(\cdot, 0)=0$,
  and $u_0 \in H$.  If $u\in C([0,T];H) \cap L^2(0,T;V)$ and $v\in
  C([0,T];H)$ are the (unique) variational and mild solution to the
  problems
  \[
  u'+Au=F(\cdot,u), \quad u(0)=u_0, \qquad \text{ and } \qquad
  v'+A_2v=F(\cdot,v), \quad v(0)=u_0,
  \]
  respectively, then $u=v$.
\end{lemma}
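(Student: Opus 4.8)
The plan is to show that the variational solution $u$ is also a mild solution, and then invoke uniqueness of mild solutions to conclude $u=v$. The key point is that $A_2$, the part of $A$ in $H$, generates an analytic (in fact, by assumption (iii), a sub-Markovian) $C_0$-semigroup $(e^{-tA_2})_{t\geq0}$ on $H$, so the mild formulation
\[
v(t) = e^{-tA_2}u_0 + \int_0^t e^{-(t-s)A_2}F(s,v(s))\,ds
\]
has a unique solution in $C([0,T];H)$ by a standard fixed-point argument, using the Lipschitz continuity of $F$ and $F(\cdot,0)=0$ (so the superposition operator maps $C([0,T];H)$ into itself).

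The main step is therefore the representation formula for $u$. First I would note that since $u\in L^2(0,T;V)$ and $A\in\cL(V,V')$, we have $Au\in L^2(0,T;V')$, and $F(\cdot,u)\in C([0,T];H)\subset L^2(0,T;H)\subset L^2(0,T;V')$, so the variational equation $u'+Au=F(\cdot,u)$ holds in $V'$ for a.e.\ $t$, with $u'\in L^2(0,T;V')$; in particular $u\in H^1(0,T;V')\cap L^2(0,T;V)\embed C([0,T];H)$, consistent with the hypothesis. Then I would apply the variation-of-constants formula for the Gelfand-triple setting: for $w\in L^2(0,T;V)$ with $w'\in L^2(0,T;V')$ solving $w'+Aw=\varphi$ with $\varphi\in L^2(0,T;V')$ and $w(0)=w_0\in H$, one has $w(t)=e^{-tA_2}w_0+\int_0^t e^{-(t-s)A_2}\varphi(s)\,ds$, where the integral is understood appropriately (as a $V'$-valued Bochner integral, which actually takes values in $H$ because of the analyticity/smoothing of the semigroup and the fact that $\varphi(s)\in H$). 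Applying this with $\varphi=F(\cdot,u)\in C([0,T];H)$ and $w_0=u_0$ shows that $u$ satisfies the same mild integral equation as $v$.

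The step I expect to require the most care is justifying this variation-of-constants identity at the level of the Gelfand triple, i.e.\ checking that the variational solution coincides with the mild solution of the linear inhomogeneous problem $w'+A_2w=\varphi$ when $\varphi$ is merely in $L^2(0,T;V')$ but we want to exploit $\varphi(s)\in H$. The cleanest route is: (a) for smooth data this is classical semigroup theory; (b) approximate $\varphi$ by $\varphi_k\to\varphi$ in $L^2(0,T;H)$ with $\varphi_k$ smooth, and $u_0$ by $u_{0,k}\to u_0$ in $H$; (c) pass to the limit, using the contractivity of $e^{-tA_2}$ on $H$ to control $\int_0^t e^{-(t-s)A_2}(\varphi_k(s)-\varphi(s))\,ds$ in $C([0,T];H)$, and using the a priori estimate $\norm{w_k-w}_{C([0,T];H)\cap L^2(0,T;V)}\lesssim \norm{u_{0,k}-u_0}+\norm{\varphi_k-\varphi}_{L^2(0,T;V')}$ (from testing the equation for $w_k-w$ and using coercivity of $A$) to identify the two limits. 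Once the linear identity is established, substituting $\varphi=F(\cdot,u)$ and comparing with the fixed-point equation for $v$, the monotonicity (hence dissipativity in $H$) of $A_2$ together with the Lipschitz bound on $F$ gives $\norm{u(t)-v(t)}^2\lesssim\int_0^t\norm{u(s)-v(s)}^2\,ds$, and Gronwall yields $u=v$.
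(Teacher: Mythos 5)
Your argument is correct, but it runs in the mirror-image direction of the paper's. You freeze the right-hand side as $\varphi=F(\cdot,u)\in C([0,T];H)$ and show that the \emph{variational} solution of the linear problem is also a \emph{mild} one, by smoothing the data $(\varphi,u_0)$, invoking the classical theory for regular data, and passing to the limit with the semigroup contraction estimate on the mild side and the coercivity (energy) estimate on the variational side; you then conclude inside the mild framework, where Gronwall (or simply uniqueness of the fixed point) gives $u=v$. The paper instead freezes $f=F(\cdot,v)$, regularizes the \emph{solution} rather than the data by applying $(I+\eps A_2)^{-m}$ to the mild solution $v$, which makes it a strong, hence variational, solution of an approximate problem, and then uses the stability estimate of Lemma~\ref{lm:sola} to identify the limit; the final comparison of $u$ and $v$ is then carried out entirely in the variational framework via the integration-by-parts formula and Gronwall. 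The two routes are of comparable difficulty: the paper's avoids having to justify the Duhamel representation in the Gelfand-triple setting and recycles the already proven Lemma~\ref{lm:sola}, while yours avoids the resolvent regularization of $v$ at the price of carefully checking the variation-of-constants identity (which you do correctly, and which only requires contractivity of $e^{-tA_2}$ on $H$, not analyticity or the sub-Markov property, so those asides are dispensable). One small simplification available to you: once $u$ is shown to satisfy the same integral equation as $v$, uniqueness of the fixed point already yields $u=v$ without a separate dissipativity argument.
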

\begin{proof}
  Let us first assume that 
  $u'+Au=f$ and $v'+A_2v=f$, where
  $f\in L^2(0,T; H)$. Then we have
  \begin{gather*}
    u(t) + \int_0^tAu(s)\,ds = u_0 + \int_0^tf(s)\,ds,\\
    v(t) = S(t)u_0 + \int_0^tS(t-s)f(s)\,ds
  \end{gather*}
  for all $t \in [0,T]$, where $S$ is the the semigroup generated on
  $H$ by $-A_2$. Let us show that $u=v$.  For $m\in\enne$, applying
  $(I+\eps A_2)^{-m}$ to the second equation we have (with obvious
  meaning of notation)
  \[
  v_\eps' + A_2 v_\eps = f_\eps, \qquad v_\eps(0)=u_0^\eps
  \]
  in the strong sense, since $v_\eps \in C([0,T]; D(A_2^m))$. In particular, $v_\eps$ is also 
  a variational solution of the equation
  \[
  v_\eps' + A v_\eps = f_\eps, \qquad v_\eps(0)=u_0^\eps.
  \]
  By construction we have that $v_\eps \to v$ in $C([0,T]; H)$;
  moreover, since $f_\eps\to f$ in $L^2(0,T; H)$ and $u_0^\eps\to u_0$
  in $H$, arguing as in the proof of Lemma~\ref{lm:sola} we have also
  that $v_\eps\to u$ in $C([0,T]; H)\cap L^2(0,T; V)$. Since mild and
  variational solutions are unique, we conclude that $u=v$.  We shall
  now extend this argument to the case where $u$ and $v$ are the
  unique variational and mild solutions to the equations
  \[
  u' + Au = F(\cdot,u), \qquad v'+A_2v = F(\cdot,v), \qquad u(0)=v(0)=u_0,
  \]
  respectively. Setting $f:=F(\cdot,v)$, the assumptions on $F$ imply
  that $f\in L^2(0,T; H)$, hence $v$ is a mild solution to
  $v'+A_2v=f$, $v(0)=u_0$. It then follows by the previous argument that
  $v$ is also the unique variational solution to $v' + Av =f$,
  $v(0)=u_0$.  Therefore
  \[
  u'+Au=F(\cdot,u), \qquad v'+Av = F(\cdot,v), \qquad u(0)=v(0)=u_0
  \]
  in the variational sense. Using the integration-by-parts formula,
  the Lipschitz continuity of $F$, and Gronwall's inequality, it is
  then a standard matter to show that $u=v$.
\end{proof}

The following estimates are crucial.
\begin{prop}   \label{prop:est}
  One has, for every $x,h,k\in H$ and $t>0$,
  \begin{align*}
    &\norm{Y^x_h}_{C([0,t];H)\cap L^2(0,t; V)} \lesssim \norm{h},\\
    &\norm{Z^x_{hk}}_{C([0,t];H)\cap L^2(0,t; V)} \lesssim_{\lambda,n}
      \norm{h}\norm{k},\\
    &\norm{Y^x_h}_{C([0,t];L^1(D))} \leq \norm{h}_{L^1(D)}.
  \end{align*}
  Regarding $A$ as an unbounded operator on $V'$, assume that there
  exists $\delta \in (0,1)$ and $\eta>0$ such that
  $H=\dom((\eta I+A)^\delta)$. Then
  \[
  \norm{Y^x_h(t)}_{H} \lesssim (1\vee t^{-\delta}) \norm{h}_{V'}.
  \]
\end{prop}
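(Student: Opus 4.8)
The plan is to prove the four estimates in turn, the first three being essentially applications of the integration-by-parts formula (Proposition~\ref{prop:Ito} in its deterministic/pathwise form) combined with Gronwall's inequality, and the last one being the genuinely new point requiring a smoothing estimate for the analytic semigroup generated by $-A$ on $V'$.

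\textbf{First three estimates.} For the bound on $Y^x_h$ in $C([0,t];H)\cap L^2(0,t;V)$, I would test equation \eqref{eq:rde} against $Y_h$: using the coercivity of $A$ and the monotonicity of $\beta_{\lambda n}$ (so that $\ip{\beta'_{\lambda n}(X_{\lambda n})Y_h}{Y_h}\geq 0$ pointwise, since $\beta'_{\lambda n}\geq 0$), one gets
\[
\tfrac12\norm{Y_h(t)}^2 + C\int_0^t\norm{Y_h(s)}_V^2\,ds \leq \tfrac12\norm{h}^2,
\]
which already gives both pieces of the norm with an implicit constant depending only on $C$ (in particular not on $\lambda$, $n$, or $x$). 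For $Z^x_{hk}$, testing \eqref{eq:rde'} against $Z_{hk}$ and again using $\beta'_{\lambda n}\geq 0$ leaves the term $\int_0^t\ip{\beta''_{\lambda n}(X_{\lambda n})Y_hY_k}{Z_{hk}}$; here $\norm{\beta''_{\lambda n}}_\infty<\infty$ (with a bound depending on $\lambda$ and $n$), and using $V\embed L^4(D)$ one estimates $\norm{\beta''_{\lambda n}(X_{\lambda n})Y_hY_k}_{L^1(0,t;H)}\lesssim_{\lambda,n}\norm{Y_h}_{L^2(0,t;V)}\norm{Y_k}_{L^2(0,t;V)}\lesssim_{\lambda,n}\norm{h}\norm{k}$ by the first estimate; Young plus Gronwall then close it. For the $L^1(D)$-bound, the idea is to differentiate $\norm{Y_h(t)}_{L^1(D)}$ formally: multiply \eqref{eq:rde} by $\operatorname{sgn}(Y_h)$ and integrate over $D$; the contribution of $A$ is nonnegative by the sub-Markovianity/accretivity of $A_1$ on $L^1(D)$ (assumption (iii)), and the term $\int_D\beta'_{\lambda n}(X_{\lambda n})\abs{Y_h}\geq 0$, so $\frac{d}{dt}\norm{Y_h(t)}_{L^1(D)}\leq 0$, whence $\norm{Y_h(t)}_{L^1(D)}\leq\norm{h}_{L^1(D)}$. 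To make this rigorous one regularizes $\operatorname{sgn}$ and passes to the limit, or invokes Lemma~\ref{lm:var-mild} to work with the mild formulation where the $L^1$-contraction of the semigroup generated by $-A_1$ is immediate.

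\textbf{Fourth estimate.} Under the hypothesis $H=\dom((\eta I+A)^\delta)$, the operator $-(\eta I+A)$, viewed on $V'$, generates an analytic semigroup $S$, and by Lemma~\ref{lm:var-mild} the solution of \eqref{eq:rde} admits the mild representation
\[
Y_h(t) = S(t)h - \int_0^t S(t-s)\bigl(\beta'_{\lambda n}(X_{\lambda n}(s))Y_h(s) - \eta Y_h(s)\bigr)\,ds
\]
in $V'$ (after absorbing the $\eta Y_h$ shift). The standard smoothing bound $\norm{(\eta I+A)^\delta S(t)}_{\cL(V',V')}\lesssim (1\vee t^{-\delta})$, together with $H=\dom((\eta I+A)^\delta)$ so that $\norm{\cdot}_H\simeq\norm{(\eta I+A)^\delta\,\cdot\,}_{V'}$, gives $\norm{S(t)h}_H\lesssim(1\vee t^{-\delta})\norm{h}_{V'}$ for the first term. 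For the integral term, $\beta'_{\lambda n}\geq 0$ is bounded only by $\norm{\beta'_{\lambda n}}_\infty$ which depends on $\lambda,n$; but one only needs \emph{some} finite bound here, since combining $\norm{(\eta I+A)^\delta S(t-s)}_{\cL(V',V')}\lesssim(t-s)^{-\delta}$, the boundedness of $\beta'_{\lambda n}$, and the already-established $\sup_{s\leq t}\norm{Y_h(s)}_{V'}\lesssim\norm{Y_h}_{C([0,t];H)}\lesssim\norm{h}_{V'}$ (using $H\embed V'$ and, for the reverse direction, the first estimate applied with $h$ replaced appropriately — or more simply just $\norm{Y_h(s)}_{V'}\leq\norm{Y_h(s)}_H$ is not enough, so one instead bootstraps: first get a crude $\norm{Y_h(t)}_H\lesssim t^{-\delta}\norm{h}_{V'}$ for small $t$ from the singular integral $\int_0^t(t-s)^{-\delta}s^{-\delta}\,ds\lesssim t^{1-2\delta}$ which is integrable when $\delta<1/2$, and for $\delta\geq 1/2$ iterate the representation), the integral is controlled by $\int_0^t(t-s)^{-\delta}\,ds\cdot\|h\|_{V'}\lesssim t^{1-\delta}\|h\|_{V'}$.

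\textbf{Main obstacle.} The delicate point is the last estimate: one must be careful that the implicit constant there may legitimately depend on $\lambda$ and $n$ (through $\norm{\beta'_{\lambda n}}_\infty$) — the statement only claims $\lesssim$, and re-reading it, the constant is allowed to depend on everything except $h$ and $t$, so this is fine — but the genuine work is setting up the fractional-power/analytic-semigroup machinery on $V'$ cleanly, justifying the mild representation via Lemma~\ref{lm:var-mild} (which is stated for the $H$-realization, so one needs the analogous statement on $V'$, or to note that $Y_h\in C([0,T];H)\subset C([0,T];V')$ already lets one interpret both formulations and identify them by uniqueness), and handling the singularity of the convolution integral when $\delta$ is close to $1$, which as indicated requires iterating the variation-of-constants formula a finite number of times. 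A secondary subtlety is making the formal $L^1(D)$-differentiation rigorous, which is why invoking the mild solution and the contraction property of $e^{-tA_1}$ on $L^1(D)$ (consequence of (iii)) is the cleanest route.
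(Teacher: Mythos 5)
Your treatment of the first three estimates is essentially the paper's: testing \eqref{eq:rde} with $Y_h$ and using coercivity of $A$ together with $\beta'_{\lambda n}\geq 0$; testing \eqref{eq:rde'} with $Z_{hk}$ and using $V\embed L^4(D)$ (the paper simply quotes the earlier well-posedness proposition for $Z_{hk}$, whose proof is exactly your computation); and, for the $L^1(D)$ bound, regularizing the signum and using the accretivity of $A_1$ in $L^1(D)$, which is precisely the paper's $\sigma_k$-argument (the paper runs it on the Yosida approximation $A_\eps$ and then lets $\eps\to 0$). One caveat on your ``cleanest route'' aside: the mild formula plus the $L^1$-contractivity of the semigroup generated by $-A_1$ alone only yields $\norm{Y_h(t)}_{L^1}\leq e^{\norm{\beta'_{\lambda n}}_\infty t}\norm{h}_{L^1}$ by Gronwall, not the constant-one bound claimed in the statement; to get $\leq\norm{h}_{L^1(D)}$ you must exploit the sign structure of the zeroth-order term (i.e.\ that multiplication by $\beta'_{\lambda n}(X)\geq 0$ is accretive in $L^1$), which is what the regularized-signum argument does.

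The genuine gap is in the fourth estimate. You start as the paper does (mild representation via Lemma~\ref{lm:var-mild}, smoothing $\norm{S(t)h}\lesssim t^{-\delta}\norm{h}_{V'}$ from $H=\dom((\eta I+A)^\delta)$), but you then apply the smoothing estimate \emph{inside} the Duhamel integral as well, measuring the integrand in $V'$. This creates the doubly singular convolution, the spurious restriction $\delta<1/2$, the proposed iteration, and — crucially — your final bound of the integral term by $\int_0^t(t-s)^{-\delta}\,ds\,\norm{h}_{V'}$ rests on the unproven claim $\sup_{s\leq t}\norm{Y_h(s)}_{V'}\lesssim\norm{h}_{V'}$ (which you yourself flag and then do not establish; the energy estimate only gives control in terms of $\norm{h}_H$). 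None of this is needed: since $\beta'_{\lambda n}$ is bounded and $Y_h(s)\in H$, the integrand $\beta'_{\lambda n}(X_{\lambda n}(s))Y_h(s)$ already lies in $H$ with $H$-norm at most $\norm{\beta'_{\lambda n}}_\infty\norm{Y_h(s)}$, so one only uses the contractivity (boundedness) of $S(t-s)$ on $H$ there, obtaining
\begin{equation*}
\norm{Y^x_h(t)} \lesssim t^{-\delta}\norm{h}_{V'}
+ \norm{\beta'_{\lambda n}}_\infty \int_0^t \norm{Y^x_h(s)}\,ds ,
\end{equation*}
and an ordinary Gronwall inequality (the inhomogeneity $t^{-\delta}$ being locally integrable) gives $\norm{Y^x_h(t)}\lesssim(1\vee t^{-\delta})\norm{h}_{V'}$ for every $\delta\in(0,1)$, with constants depending on $\lambda,n$ as allowed. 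This is the paper's argument; your route could in principle be repaired via a singular-Gronwall (Henry-type) lemma, but as written the step controlling the convolution term does not go through.
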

\begin{proof}
  Let $\Omega' \subseteq \Omega$ with $\P(\Omega')=1$ be such that
  \eqref{eq:rde} holds true for all $t \in [0,T]$ and all
  $\omega \in \Omega'$. Let $\omega \in \Omega'$ be fixed. Recalling
  that $A$ is coercive and that $\beta'_{\lambda n}$ is
  positive because $\beta_{\lambda n}$ is increasing, taking the
  scalar product with $Y_h(t)$ in \eqref{eq:rde} and integrating in time yields
  \[
  \frac12 \norm[\big]{Y^x_h(t)}^2 + C \int_0^t\norm{Y^x_h(s)}_V^2\,ds 
  \leq \frac12\norm{h}^2
  \]
  for all $t \in [0,T]$, and the first estimate is thus proved.
  The second estimate follows directly from
  Proposition~\ref{thm:d2}.
  Furthermore, denoting the Yosida approximation of the part of $A$ in
  $H$ by $A_\varepsilon$, let $Y_{h \eps}^x \in C([0,T];H)$ be the
  unique strong solution to the equation
  \[
  Y'_{h \eps}(t) + A_\eps Y_{h \eps}(t) 
  + \beta'_{\lambda n}(X_{\lambda n}(t)) Y_{h \eps}(t) = 0, 
  \qquad Y_{h \eps}(0) = h.
  \]
  Let $(\sigma_k)$ be a sequence of smooth increasing functions
  approximating pointwise the (maximal monotone) signum graph, and
  $\widehat{\sigma}_k$ be the primitive of $\sigma_k$ with
  $\widehat{\sigma}_k(0)=0$. Taking the scalar product of the previous
  equation with $\sigma_k(Y_{h \eps}^x)$ and integrating in time we
  get, for every $t>0$,
  \[
  \int_D \widehat{\sigma}_k(Y_{h \eps}^x(t)) 
  + \int_0^t\ip{A_\eps Y_{h \eps}^x(s)}{\sigma_k(Y_{h\eps}^x(s))}\,ds
  + \int_0^t\!\!\int_D \beta_{\lambda n}'(X_{\lambda n}(s))%
    \sigma_k(Y_{h \eps}^x(s))Y_{h\eps}^x(s)\,ds
  \leq \int_D \widehat{\sigma}_k(h).
  \]
  Since, as $k \to \infty$, $\sigma_k(Y_{h\eps}^x)$ converges a.e.  to
  a measurable function $w_\eps\in\operatorname{sgn}(Y_{h\eps}^x)$ and
  $\widehat{\sigma} \to |\cdot|$, letting $k\to\infty$ we get, for
  every $t \geq 0$,
  \[
  \norm{Y_{h \eps}^x(t)}_{L^1(D)} 
  + \int_0^t\ip{A_\eps Y_{h \eps}^x(s)}{w_\eps(s))}\,ds
  \leq \norm{h}_{L^1(D)}
  \qquad\forall t\in[0,T].
  \]
  Recalling that $A_2$ extends to an $m$-accretive operator on
  $L^1(D)$, the second term on the left-hand side is non-negative, and
  taking into account that $Y_{h\eps}^x\to Y_h^x$ in $C([0,T];H)$ as
  $\eps \to 0$, the third inequality follows.
  Finally, since $Y_h$ is the unique variational solution to
  \eqref{eq:rde}, by Lemma~\ref{lm:var-mild} we have that $Y_h$ is
  also mild solution to the same equation, i.e.
  \[
  Y^x_h(t) = S(t)h - \int_0^tS(t-s)\beta_{\lambda n}'(X^x(s))Y^x_h(s)\,ds 
  \qquad\forall t\in[0,T], \quad\P\text{-a.s.}
  \]
  Recall that $-A$ generates an analytic semigroup on $V'$ extending
  $S$, denoted by the same symbol. Since $H=\dom((\eta I+A)^\delta)$,
  we have $\norm{S(t)h} \lesssim t^{-\delta}\norm{h}_{V'}$ for every
  $t>0$.  By the contractivity of $S$ in $H$ we also have, for every
  $t>0$,
  \begin{align*}
  \norm{Y^x_h(t)} &\lesssim t^{-\delta}\norm{h}_{V'} +
  \norm{\beta_{\lambda n}'}_{\infty}\int_0^t\norm{Y^x_h(s)}\,ds 
  \end{align*}
  from which Gronwall's inequality implies
  \[
  \norm{Y^x_h(t)} \lesssim t^{-\delta}\norm{h}_{V'} 
  + \norm{\beta_{\lambda n}'}_{\infty}
  \int_0^t s^{-\delta}e^{\norm{\beta_{\lambda n}'}_{\infty}(t-s)}\norm{h}_{V'}\,ds.
  \]
  Therefore we have, for every $t \in [0,1]$,
  \[
  \norm{Y^x_h(t)} \lesssim t^{-\delta}\norm{h}_{V'} 
  + \norm{\beta_{\lambda n}'}_{\infty}
  e^{\norm{\beta_{\lambda n}'}_{\infty}} \norm{h}_{V'} \int_0^1s^{-\delta}\,ds
  = \left(t^{-\delta}+\frac{1^{1+\delta}}{1+\delta}\right) \norm{h}_{V'}
  \lesssim (1+t^{-\delta})\norm{h}_{V'}
  \]
  as well as, for every $t\geq1$,
  \[
  \norm{Y^x_h(t)} \leq \norm{Y^x_h(1)} \lesssim
  1^{-\delta}\norm{h}_{V'} = \norm{h}_{V'},
  \]
  which implies the last estimate.
\end{proof}

\begin{lemma}   \label{lem:v}

  Let $\alpha>0$ and $g\in C^1_b(V')\cap C^2_b(H)\cap C^1_b(L^1(D))$.
  For every $n\in\enne$ and $\lambda \in (0,1)$, the function
  $v_{\lambda n}: H \to \erre$ defined as
  \[
  v_{\lambda n}(x) := \E\int_0^{+\infty}e^{-\alpha t}g(X^x_{\lambda n}(t))\,dt
  \]
  belongs to $\dom(L_0)$ and solves \eqref{eq:Kreg}.  Moreover, there
  exists a positive constant $M$, independent of $\lambda$ and $n$,
  such that
  \begin{equation}   \label{eq:est_v}
  \norm{v_{\lambda n}}_{C^1_b(H)\cap C^1_b(L^1(D))} \leq M 
  \end{equation}
  for all $n \in \enne$ and $\lambda \in (0,1)$.
\end{lemma}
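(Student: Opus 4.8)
The plan is to derive the regularity of $v_{\lambda n}$ by differentiating under the expectation and the time integral, using the pathwise differentiability of the solution map $x\mapsto X^x_{\lambda n}$ from Theorems~\ref{thm:d1} and~\ref{thm:d2} (the latter requiring the embedding $V\embed L^4(D)$, which is in force here) together with the a priori estimates of Proposition~\ref{prop:est}. Once $v_{\lambda n}\in\dom(L_0)$ and the uniform bound \eqref{eq:est_v} are in hand, the fact that $v_{\lambda n}$ solves \eqref{eq:Kreg} follows from It\^o's formula as already indicated in the discussion preceding the statement.

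Since $g$ is bounded, $v_{\lambda n}$ is well defined with $\norm{v_{\lambda n}}_\infty\le\norm{g}_\infty/\alpha$. Fix $x,h\in H$ and abbreviate $X_\eps:=X^{x+\eps h}_{\lambda n}$, $X:=X^x_{\lambda n}$. Writing
\[
  \frac{g(X_\eps(t))-g(X(t))}{\eps}
  =\int_0^1 Dg\bigl(X(t)+\theta(X_\eps(t)-X(t))\bigr)\,\frac{X_\eps(t)-X(t)}{\eps}\,d\theta,
\]
one lets $\eps\to0$ using that $(X_\eps-X)/\eps\to Y^x_h$ in $L^2(\Omega;E)$ (Theorem~\ref{thm:d1}), that $Dg$ is bounded and continuous, and that the solution map is globally Lipschitz, so that the integrand is bounded uniformly in $\eps$ and $t$ by a multiple of $\norm{h}$; together with the factor $e^{-\alpha t}$ this provides an integrable majorant on $\Omega\times(0,\infty)$, and dominated convergence yields
\[
  Dv_{\lambda n}(x)h=\E\int_0^\infty e^{-\alpha t}\,Dg(X^x_{\lambda n}(t))\,Y^x_h(t)\,dt.
\]
Repeating the argument with $Dg$ in place of $g$ and invoking Theorem~\ref{thm:d2} gives
\[
  D^2v_{\lambda n}(x)(h,k)=\E\int_0^\infty e^{-\alpha t}\Bigl(D^2g(X^x_{\lambda n}(t))\bigl(Y^x_h(t),Y^x_k(t)\bigr)+Dg(X^x_{\lambda n}(t))\,Z^x_{hk}(t)\Bigr)\,dt,
\]
with $Y^x_h$, $Z^x_{hk}$ the solutions to \eqref{eq:rde} and \eqref{eq:rde'}.

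Membership in $\dom(L_0)$ and the bound \eqref{eq:est_v} are then read off from the four estimates of Proposition~\ref{prop:est}. From $\norm{Y^x_h(t)}\le\norm{h}$ one gets $\abs{Dv_{\lambda n}(x)h}\le\alpha^{-1}\norm{Dg}_\infty\norm{h}$; from $\norm{Y^x_k(t)}_{L^1(D)}\le\norm{k}_{L^1(D)}$ and $g\in C^1_b(L^1(D))$ one gets $\abs{Dv_{\lambda n}(x)k}\lesssim\norm{k}_{L^1(D)}$ with a constant depending only on $g$ and $\alpha$; from $\norm{Y^x_h(t)}\lesssim(1\vee t^{-\delta})\norm{h}_{V'}$, which is integrable against $e^{-\alpha t}$ because $\delta<1$, one gets $\abs{Dv_{\lambda n}(x)h}\lesssim_{\lambda,n}\norm{h}_{V'}$; and from the first two estimates one gets $\norm{D^2v_{\lambda n}(x)}_{\cL_2(H)}\lesssim_{\lambda,n}1$. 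Continuity of $x\mapsto Dv_{\lambda n}(x)$ and $x\mapsto D^2v_{\lambda n}(x)$ in the relevant operator topologies follows from the continuity in $x$ of $X^x_{\lambda n}$, $Y^x_h$, $Z^x_{hk}$ (Theorems~\ref{thm:d1} and~\ref{thm:d2}), the continuity of $g$, $Dg$, $D^2g$, and dominated convergence, the domination being uniform over unit balls again by Proposition~\ref{prop:est}. Hence $v_{\lambda n}\in C^1_b(V')\cap C^2_b(H)\cap C^1_b(L^1(D))=\dom(L_0)$, and, since the $H$- and $L^1(D)$-bounds above depend only on $\alpha$ and $g$, the constant $M$ in \eqref{eq:est_v} can be chosen independently of $\lambda$ and $n$. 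Finally, since $v_{\lambda n}\in C^1_b(V')\cap C^2_b(H)\subseteq C^1_b(V)\cap C^2_b(H)$ and $X^x_{\lambda n}$ solves \eqref{eq:reg} with drift $-AX_{\lambda n}-\beta_{\lambda n}(X_{\lambda n})\in L^0(\Omega;L^1(0,T;H))\oplus L^0(\Omega;L^2(0,T;V'))$, Proposition~\ref{prop:Ito_pard} applied to $v_{\lambda n}(X^x_{\lambda n}(\cdot))$, together with the identity $e^{-\alpha s}P^{\lambda n}_sv_{\lambda n}(x)=v_{\lambda n}(x)-\E\int_0^s e^{-\alpha r}g(X^x_{\lambda n}(r))\,dr$ coming from the Markov property, yields $\alpha v_{\lambda n}+L_0^{\lambda n}v_{\lambda n}=g$ upon letting $s\to0$ (the stochastic integral in It\^o's formula is a martingale, $Dv_{\lambda n}$ being bounded and $B$ non-random).

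The main obstacle is the differentiation step: carrying out two differentiations under the expectation and the infinite-horizon time integral, which forces one to pair the $L^2(\Omega;E)$-differentiability of $x\mapsto X^x_{\lambda n}$ with majorants uniform in the increment $\eps$ and in $t$. Once the two representation formulas are established, the rest is essentially bookkeeping with Proposition~\ref{prop:est} --- whose $H$- and $L^1(D)$-estimates are precisely the ones that do not degrade as $\lambda$ and $n$ vary --- and the It\^o argument already sketched by the authors.
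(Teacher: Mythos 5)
Your argument is correct and, for the heart of the lemma, coincides with the paper's: both differentiate under the expectation and the Laplace-type time integral using Theorems~\ref{thm:d1}--\ref{thm:d2}, obtain the same representation formulas for $Dv_{\lambda n}$ and $D^2v_{\lambda n}$, and then read off membership in $\dom(L_0)$ and the $(\lambda,n)$-uniform bound \eqref{eq:est_v} from the four estimates of Proposition~\ref{prop:est} (the $H$- and $L^1(D)$-estimates being the $t$-uniform, $(\lambda,n)$-independent ones, exactly as you note). One small point worth making explicit in your differentiation step: the majorant uniform in $t\in(0,\infty)$ does not come from the generic Lipschitz dependence on the initial datum (whose constant from Theorem~\ref{thm:WP} grows with the horizon), but from the fact that $\norm{Y^x_h(t)}\leq\norm{h}$ and $\norm{X^{x+\eps h}_{\lambda n}(t)-X^x_{\lambda n}(t)}\leq\abs{\eps}\,\norm{h}$ for all $t$, thanks to the coercivity of $A$ and the monotonicity of $\beta_{\lambda n}$ --- which is precisely what Proposition~\ref{prop:est} encodes and what the paper uses.

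The only genuinely different sub-step is the verification that $v_{\lambda n}$ solves \eqref{eq:Kreg}. The paper applies It\^o's formula (Proposition~\ref{prop:Ito_pard}) to $g(X^x_{\lambda n}(\cdot))$, takes expectations, and lets $t\to+\infty$ to get the Laplace-transform identity relating $\alpha v_{\lambda n}$, $g$ and $\int_0^\infty e^{-\alpha t}P^{\lambda n}_tL_0^{\lambda n}g\,dt$, then identifies the latter with $L_0^{\lambda n}v_{\lambda n}$. You instead apply It\^o's formula to $v_{\lambda n}(X^x_{\lambda n}(\cdot))$ itself --- legitimate, since you have already established $v_{\lambda n}\in C^2_b(H)\cap C^1_b(V')$, which is the regularity the paper requires of test functions --- and combine it with the Markov-property resolvent identity, letting $s\to0$. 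This buys you independence from the commutation of $L_0^{\lambda n}$ with the semigroup, at the price of a limit at $s=0$ in which the term $\E\,\ip{AX^x_{\lambda n}(r)}{Dv_{\lambda n}(X^x_{\lambda n}(r))}$ is only controlled in an averaged ($L^2$ in $r$) sense, since $X_{\lambda n}$ is not $V$-continuous at $r=0$; you should say a word on why $\frac1s\int_0^s e^{-\alpha r}\E\,[L_0^{\lambda n}v_{\lambda n}](X^x_{\lambda n}(r))\,dr\to L_0^{\lambda n}v_{\lambda n}(x)$ for $x\in V$ (or, equivalently, argue first that the identity holds for a.e.\ $s$ and then upgrade it). This delicacy is of the same order as the one implicit in the paper's own final identification, so it is a matter of filling in a routine detail rather than a flaw in the approach.
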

\begin{proof}
  Since $g\in C^1_b(H)$, for any $h\in H$ we have, by the
  first estimate of Proposition~\ref{prop:est},
  \begin{align*}
    D\left(g(X_{\lambda n}^x(t)\right)h
    &= Dg(X^x_{\lambda n}(t))DX^x_{\lambda n}(t)h=Dg(X_{\lambda n}^x(t))Y^x_h(t)\\
    &\leq\norm{Dg}_{C(H; H)}\norm{Y^x_h}_{C([0,T]; H)}%
      \leq \norm{Dg}_{C(H; H)}\norm{h},
  \end{align*}
  hence, by the dominated convergence theorem,
  $v_{\lambda n}\in C^1_b(H)$ and
  \begin{equation}
    \label{eq:dv}
    Dv_{\lambda n}(x)h = 
    \E\int_0^{+\infty}e^{-\alpha t}Dg(X_{\lambda n}^x(t))Y_h^x(t)\,dt.
  \end{equation}
  The uniform boundedness of $\norm{v_{\lambda n}}_{C^1_b(H)}$ in
  $\lambda$ and $n$ follows directly from these computations.
  Similarly, using the fact that $g\in C^2_b(H)$ and the second
  estimate of Proposition~\ref{prop:est}, we have, for every $k\in H$,
  \begin{align*}
  D(D(g(X_{\lambda n}^x(t))h)k
    &= D^2g(X_{\lambda n}^x(t))(Y_h^x(t),Y_k^x(t))%
      + Dg(X_{\lambda n}^x(t))Z_{hk}^x(t)\\
  &\leq\norm{D^2g}_{C(H;\cL_2(H;\erre))} \norm{Y_h^x}_{C([0,T];H)}
    \norm{Y_k^x}_{C([0,T]; H)}\\
  &\quad +\norm{Dg}_{C(H,H)}\norm{Z_{hk}^x}_{C([0,T]; H)}\\
  &\lesssim_{\lambda, n} \norm{g}_{C^2_b}\norm{h} \norm{k},
  \end{align*}
  hence, by the dominated convergence theorem, $v_{\lambda n} \in
  C^2_b(H)$ and
  \begin{equation}
    \label{eq:d2v}
  D^2v_{\lambda n}(x)(h,k) =
  \E\int_0^{+\infty}e^{-\alpha t}\left(D^2g(X_{\lambda n}^x(t))Y_h^x(t)Y_k^x(t)
  + Dg(X_{\lambda n}^x(t))Z_{hk}^x(t)\right)\,dt.
  \end{equation}
  Moreover, using the third estimate of Proposition~\ref{prop:est} and
  the fact that $g\in C^1_b(L^1(D))$, it follows by H\"older's
  inequality and \eqref{eq:dv} that
  \[
  Dv_{\lambda n}(x)h\leq \E\int_0^{+\infty}e^{-\alpha t}
  \norm{Dg}_{C(H; L^\infty(D))}\norm{Y_h^x(t)}_{L^1(D)}\,dt \leq
  \frac1\alpha\norm{Dg}_{C(H; L^\infty(D))}\norm{h}_{L^1(D)},
  \]
  which implies that $v_{\lambda n} \in C^1_b(L^1(D))$ and the
  estimate \eqref{eq:est_v}.
  Finally, by the last estimate of Proposition~\ref{prop:est} and the
  fact that $g\in C^1_b(V')$, we have
  \[
  Dv_{\lambda n}(x)h\leq \E\int_0^{+\infty}e^{-\alpha t}
  \norm{Dg}_{C(H; V)}\norm{Y_h^x(t)}_{V'}\,dt 
  \lesssim \norm{Dg}_{C(H;V)} \norm{h}_{V'} 
  \int_0^{+\infty} (1 \vee t^{-\delta}) e^{-\alpha t}\,dt.
  \]
  Since $t\mapsto (1\vee t^{-\delta})e^{-\alpha t}$ belongs to
  $L^1(0,+\infty)$, we have
  \[
  Dv_{\lambda n}(x)h \lesssim_{\lambda,n} \norm{h}_{V'},
  \]
  thus also $v_{\lambda n} \in C^1_b(V')$.

  Let us show now that $v_{\lambda n}$ solves \eqref{eq:Kreg}.
  Indeed, since $g \in C^2_b(H)\cap C^1_b(V')$, by It\^o's formula in
  the version of Proposition~\ref{prop:Ito_pard} we get
  \begin{align*}
    &g(X_{\lambda n}^x(t))
      + \int_0^t\ip{AX_{\lambda n}^x(s)}{Dg(X_{\lambda n}^x(s))}\,ds
    +\int_0^t\ip{\beta_{\lambda n}(X_{\lambda n}^x(s))}{Dg(X_{\lambda n}^x(s))}\,ds\\
    &\qquad =g(x) + \frac12\int_0^t\operatorname{Tr}[B^*(X_{\lambda
      n}^x(s))D^2g(X_{\lambda n}^x(s))B(X_{\lambda n}^x(s))]\,ds\\
    &\qquad\quad + \int_0^tDg(X_{\lambda n}^x(s))B(X_{\lambda n}^x(s))\,dW(s)
  \end{align*}
  for every $t>0$.  Thanks to the boundedness of
  $Dg$, taking expectations and using Fubini's theorem we deduce that,
  for every $\alpha>0$ and $x \in V$,
  \[
  e^{-\alpha t}\E g(X_{\lambda n}^x(t)) + \alpha\E \int_0^te^{-\alpha
    s}g(X_{\lambda n}^x(s))\,ds -\int_0^tP_s^{\lambda n}L_0^{\lambda
    n}g(x)\,ds = g(x).
  \]
  Since $g\in C_b(H)$, it is clear that, as $t\to +\infty$, the first
  and second term on the left-hand side converge to zero and $\alpha
  v_{\lambda n}(x)$, respectively, hence, by difference, we deduce
  that
  \[
  \int_0^tP_s^{\lambda n}L_0^{\lambda n}g(x) 
  \to \int_0^{+\infty}P_s^{\lambda n}L_0^{\lambda n}g(x)\,ds.
  \]
  Letting then $t\to+\infty$ we infer that
  \[
  \alpha v_{\lambda n}(x) - \int_0^{+\infty}e^{-\alpha t}P^{\lambda
    n}_tL_0^{\lambda n}g(x)\,dt=g(x),
  \]
  hence
  \[
  \alpha v_{\lambda n}(x) - L_0^{\lambda n} v_{\lambda n}(x) =g(x)
  \qquad \forall x\in V.
  \qedhere
  \]
\end{proof}

\begin{lemma}\label{lm:conv}
  One has
  \[
    \lim_{\lambda \to 0} \lim_{n \to \infty}
    \norm[\big]{L_0v_{\lambda n} - L_0^{\lambda n} v_{\lambda n}}_{L^1(H,\mu)}
    = 0.
  \]
\end{lemma}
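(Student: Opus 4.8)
The plan is to exploit that $L_0$ and $L_0^{\lambda n}$ differ only in the term involving the nonlinear drift, so that their difference, applied to $v_{\lambda n}$, collapses to a single duality pairing which can be controlled by the uniform bound~\eqref{eq:est_v}. Concretely, for $f\in\dom(L_0)$ and $x\in V\cap J^*$ (a set of full $\mu$-measure, by Theorem~\ref{thm:supp}) the second-order term and the term $\ip{Ax}{Df(x)}$ cancel in $L_0f-L_0^{\lambda n}f$, leaving
\[
  [L_0f-L_0^{\lambda n}f](x)
  =\ip{\beta(x)-\beta_{\lambda n}(x)}{Df(x)}
  =\int_D\bigl(\beta(x)-\beta_{\lambda n}(x)\bigr)Df(x).
\]
Taking $f=v_{\lambda n}$, which belongs to $\dom(L_0)$ by Lemma~\ref{lem:v}, and using that \eqref{eq:est_v} gives $\sup_x\norm{Dv_{\lambda n}(x)}_{L^\infty(D)}\le M$ uniformly in $\lambda$ and $n$, one obtains
\[
  \bigl|[L_0v_{\lambda n}-L_0^{\lambda n}v_{\lambda n}](x)\bigr|
  \le M\,\norm{\beta(x)-\beta_{\lambda n}(x)}_{L^1(D)}
\]
for $\mu$-a.e.\ $x$, so that the assertion reduces to proving
\[
  \lim_{\lambda\to0}\lim_{n\to\infty}
  \int_H\norm{\beta(x)-\beta_{\lambda n}(x)}_{L^1(D)}\,\mu(dx)=0.
\]

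To handle the iterated limit I would split $\beta-\beta_{\lambda n}=(\beta-\beta_\lambda)+(\beta_\lambda-\beta_{\lambda n})$ and dispose of the mollification error first, at fixed $\lambda$: since $\beta_\lambda$ is Lipschitz with constant $1/\lambda$, one has $\abs{\beta_\lambda(r)-\beta_{\lambda n}(r)}\le\lambda^{-1}\int_\erre\abs{s}\rho_n(s)\,ds=:\lambda^{-1}c_n$ uniformly in $r\in\erre$, with $c_n\to0$ as $n\to\infty$, whence, $\mu$ being a probability measure and $\abs{D}$ finite, $\int_H\norm{\beta_\lambda(x)-\beta_{\lambda n}(x)}_{L^1(D)}\,\mu(dx)\le\abs{D}\,c_n/\lambda\to0$ as $n\to\infty$. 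It then remains to show that $\int_H\norm{\beta(x)-\beta_\lambda(x)}_{L^1(D)}\,\mu(dx)\to0$ as $\lambda\to0$, which I would obtain by two applications of dominated convergence. Being a single-valued maximal monotone map with full domain, $\beta$ is continuous, so $\beta_\lambda(r)\to\beta(r)$ for every $r\in\erre$, and the standard property $\abs{\beta_\lambda(r)}\le\abs{\beta(r)}$ of the Yosida approximation holds; hence, for $\mu$-a.e.\ $x$ — that is, for $x\in J^*$, on which $\beta(x)\in L^1(D)$ — one has $\abs{\beta(x)-\beta_\lambda(x)}\to0$ pointwise on $D$ and $\abs{\beta(x)-\beta_\lambda(x)}\le2\abs{\beta(x)}\in L^1(D)$, so that $\norm{\beta(x)-\beta_\lambda(x)}_{L^1(D)}\to0$; and since $\norm{\beta(x)-\beta_\lambda(x)}_{L^1(D)}\le2\norm{\beta(x)}_{L^1(D)}$ with $x\mapsto\norm{\beta(x)}_{L^1(D)}\in L^1(H,\mu)$, a second application of dominated convergence (in $x$, against $\mu$) closes the argument.

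The point needing care — and really the heart of the matter — is the $\mu$-integrability of $x\mapsto\norm{\beta(x)}_{L^1(D)}$. I would get it from the integrability estimate of Theorem~\ref{thm:supp} together with the remark that superlinearity of $j^*$ at infinity provides a constant $c$ with $\abs{r}\le j^*(r)+c$ for all $r\in\erre$, whence $\norm{\beta(x)}_{L^1(D)}\le\norm{j^*(\beta(x))}_{L^1(D)}+c\abs{D}$ and the right-hand side is $\mu$-integrable by Theorem~\ref{thm:supp}. This is also the reason Lemma~\ref{lem:v} was stated with a $C^1_b(L^1(D))$-bound on $v_{\lambda n}$, not just a $C^1_b(H)$-bound: only the former yields the $L^\infty(D)$-control of $Dv_{\lambda n}$ used above, a mere $H$-estimate being useless since $\beta(x)$ is, in general, only in $L^1(D)$ and not in $H$.
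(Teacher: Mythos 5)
Your proof is correct, and its skeleton matches the paper's: both reduce the difference $L_0v_{\lambda n}-L_0^{\lambda n}v_{\lambda n}$ to the pairing $\ip{\beta(x)-\beta_{\lambda n}(x)}{Dv_{\lambda n}(x)}$, use the uniform $L^\infty(D)$-bound on $Dv_{\lambda n}$ from \eqref{eq:est_v}, split off the mollification error $\beta_{\lambda n}-\beta_\lambda$ at fixed $\lambda$, and finish the Yosida error $\beta_\lambda-\beta$ by dominated convergence first in $D$, then in $x$ against $\mu$. Where you genuinely diverge is in the dominating function for the $\lambda\to 0$ step: the paper invokes Young's inequality $a\abs{b}\le j(a)+j^*(\abs{b})$ together with the constant $\eta$ manufactured from the symmetry assumption (vi), dominating the integrand by $j(1)+j^*(\eta\abs{\beta(x)})$ and then using the bound $j^*(\eta\abs{y})\le j^*(y)+M_2$ plus Theorem~\ref{thm:supp}; you instead dominate simply by $2\abs{\beta(x)}$ and prove $x\mapsto\norm{\beta(x)}_{L^1(D)}\in L^1(H,\mu)$ from the elementary consequence $\abs{r}\le j^*(r)+c$ of the superlinearity of $j^*$ (equivalently, of $\dom(\beta)=\erre$) combined with the $\mu$-integrability of $\int_Dj^*(\beta(\cdot))$ from Theorem~\ref{thm:supp}. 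Your route is slightly more elementary and, notably, does not use assumption (vi) at all within this lemma (it enters only through the results you quote), whereas the paper's route stays closer to the $j$--$j^*$ duality bookkeeping used elsewhere in Section~\ref{sec:kolm}. Your treatment of the $n\to\infty$ limit via the uniform bound $\abs{\beta_\lambda-\beta_{\lambda n}}\le c_n/\lambda$ with $c_n=\int\abs{s}\rho_n(s)\,ds\to0$ is also a clean (if minor) sharpening of the paper's dominated-convergence argument, and your closing remark correctly identifies why the $C^1_b(L^1(D))$-part of \eqref{eq:est_v} is indispensable. One cosmetic point: the pointwise convergence $\beta_\lambda(r)\to\beta(r)$ is the standard Yosida property $\beta_\lambda\to\beta^0=\beta$ on $\dom(\beta)=\erre$ (or follows from $\beta_\lambda(r)=\beta(J_\lambda r)$ and $J_\lambda r\to r$ with the continuity you note); as written, the word ``so'' slightly overstates what continuity alone gives, but the conclusion is right.
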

\begin{proof}
  By definition of $L_0$ and $L_0^{\lambda n}$, the claim amounts to
  showing that
  \[
  \lim_{\lambda \to 0} \lim_{n \to \infty} \int_H \abs[\big]{%
  \ip[\big]{\beta_{\lambda n}(x)-\beta(x)}{Dv(x)}}\,\mu(dx) \to 0.
  \]
  Since $\beta_{\lambda n}$ is Lipschitz-continuous with Lipschitz
  constant bounded by $1/\lambda$ for every $n \in \enne$, we have
  \[
    \abs[\big]{%
      \ip[\big]{\beta_\lambda(x) - \beta_{\lambda n}(x)}{Dv(x)}}
    \lesssim \frac{1}{\lambda} \norm{x},
  \]
  so that, recalling that $\norm{\cdot} \in L^2(H,\mu)$ and
  $\beta_{\lambda n} \to \beta_\lambda$ pointwise as $n \to \infty$,
  the dominated convergence theorem yields
  \[
    \lim_{n \to \infty} \int_H \abs[\big]{%
      \ip[\big]{\beta_\lambda(x) - \beta_{\lambda n}(x)}{Dv(x)}}\,\mu(dx)
    = 0.
  \]
  Since $Dv_{\lambda n}(x)$ is bounded in $L^\infty(D)$ uniformly over
  $\lambda$, $n$ and $x$ by estimate \eqref{eq:est_v}, one has
  \[
    \abs[\big]{\bigl( \beta(x) - \beta_\lambda(x) \bigr) Dv(x)}
    \lesssim \abs[\big]{\beta(x) - \beta_\lambda(x)},
  \]
  hence
  \[
    \bigl( \beta(x) - \beta_\lambda(x) \bigr) Dv_{\lambda n}(x) \to 0
  \]
  in $L^0(D)$ as $\lambda \to 0$ for every $x \in V$.
  Recalling the definition of $\eta$ in \S\ref{sec:ass}, we deduce
  that $j^*(\eta|\beta(x)|)\in L^1(D)$.  Appealing to Young's
  inequality in the form 
  \[ 
  a\abs{b} \leq j(a) + j^*(\abs{b}) \qquad \forall a, b \in \erre,
  \]
  we have
  \[
    \eta\abs{\beta(x)} + \eta\abs{\beta_\lambda(x)} \leq 2j(1) +
    j^*(\eta|\beta(x)|) + j^*(\eta|\beta_\lambda(x)|)
  \]
  hence also, since $j^*$ is increasing on $\erre_+$ and
  $\abs{\beta_\lambda} \leq \abs{\beta}$,
  \[
    \abs[\big]{\bigl( \beta(x) - \beta_\lambda(x) \bigr) Dv_{\lambda n}(x)}
    \lesssim j(1) + j^*(\eta|\beta(x)|).
  \]
  which belongs to $L^1(D)$ for every $x \in J^*$. Therefore, by the
  dominated convergence theorem,
  \[
  \lim_{\lambda \to 0} \ip[\big]{\beta(x) - \beta_\lambda(x)}{Dv(x)} = 0
  \]
  for every $x \in H \cap J^*$. Using again the uniform boundedness in
  $L^\infty(D)$ of $v_{\lambda n}(x)$ we also have
  \[
  \abs[\big]{ \ip[\big]{\beta(x) - \beta_\lambda(x)}{Dv_{\lambda n}(x)} }
  \lesssim 1 + \int_D j^*(\delta|\beta(x)|),
  \]
  where the right-hand side belongs to $L^1(H,\mu)$ by
  Theorem~\ref{thm:supp}. A further application of the dominated
  convergence theorem thus yields
  \[
  \lim_{\lambda \to 0} \int_H 
  \abs[\big]{ \ip[\big]{\beta(x) - \beta_\lambda(x)}{Dv_{\lambda n}(x)} }\,\mu(dx)
  = 0.
  \qedhere
  \]
\end{proof}

We are now in the position to state and prove the main result of this
section, that gives a positive answer to the problem of
$L^1$-uniqueness for the Kolmogorov operator $L_0$. The question is
whether the extension to $L^1(H,\mu)$ of the transition semigroup $P$,
generated by the solution to the stochastic equation \eqref{eq:0}, is
the only strongly continuous semigroup on $L^1(H,\mu)$ whose
infinitesimal generator is an extension of the Kolmogorov operator
$L_0$.
Recall that, apart of the standing assumptions of
\S\ref{sec:ass}, we are also assuming that $\beta$ is a function, $B$
is non-random and does not depend on the unknown, $V$ is continuously
embedded in $L^4(D)$, and $H$ is the domain of a fractional power of
(a shift of) $A$, seen as the negative generator of an analytic
semigroup in $V'$.
\begin{thm}
  The generator $L$ of the extension to $L^1(H,\mu)$ of the transition
  semigroup $P$ is the closure of $L_0$ in $L^1(H,\mu)$.
\end{thm}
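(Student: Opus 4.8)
The plan is to apply the Lumer--Phillips theorem to the operator $-L_0$ on $L^1(H,\mu)$. We have already observed that $\dom(L_0)$ is dense in $L^1(H,\mu)$, that $L_0$ coincides with $L$ on $\dom(L_0)$, and -- since $-L$ generates a $C_0$-semigroup of contractions and is hence $m$-dissipative -- that $-L_0$ is dissipative. Therefore it only remains to prove that the range of $I+L_0$ is dense in $L^1(H,\mu)$: Lumer--Phillips then yields that $-\overline{L_0}$ is the generator of a $C_0$-semigroup of contractions on $L^1(H,\mu)$. Since $L_0 \subseteq L$ and $L$ is closed, we have $\overline{L_0} \subseteq L$; since $-\overline{L_0}$ and $-L$ both generate $C_0$-semigroups of contractions, both $I+\overline{L_0}$ and $I+L$ are bijective, and the inclusion $I+\overline{L_0} \subseteq I+L$ together with the surjectivity of $I+\overline{L_0}$ and the injectivity of $I+L$ forces $\dom(\overline{L_0})=\dom(L)$, i.e. $L=\overline{L_0}$.

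To establish the density of $\ran(I+L_0)$, fix $g \in \dom(L_0)$; since $\dom(L_0)$ is dense in $L^1(H,\mu)$, it is enough to approximate $g$ in $L^1(H,\mu)$ by elements of $\ran(I+L_0)$. For $\lambda \in (0,1)$ and $n \in \enne$, let $v_{\lambda n}$ be the function associated to $g$ (with $\alpha=1$) as in Lemma~\ref{lem:v}. By that lemma $v_{\lambda n} \in \dom(L_0)$ and $v_{\lambda n} + L_0^{\lambda n}v_{\lambda n} = g$, whence
\[
  (I + L_0) v_{\lambda n} = g + \bigl( L_0 v_{\lambda n} - L_0^{\lambda n} v_{\lambda n} \bigr),
\]
the right-hand side being a well-defined element of $L^1(H,\mu)$ precisely because $v_{\lambda n} \in \dom(L_0)$. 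By Lemma~\ref{lm:conv} the error term $L_0 v_{\lambda n} - L_0^{\lambda n} v_{\lambda n}$ tends to zero in $L^1(H,\mu)$ as $n \to \infty$ and then $\lambda \to 0$, so $g \in \overline{\ran(I+L_0)}$. As $g$ ranges over the dense set $\dom(L_0)$, we conclude that $\ran(I+L_0)$ is dense in $L^1(H,\mu)$, which completes the argument.

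The genuinely substantial work has already been carried out in the preceding results: the differentiability of the solution map (Theorems~\ref{thm:d1} and~\ref{thm:d2}) and the pathwise estimates of Proposition~\ref{prop:est} are what give $v_{\lambda n} \in \dom(L_0)$ with bounds uniform in $\lambda$ and $n$ (Lemma~\ref{lem:v}), while the $\mu$-integrability of $j^*(\beta(\cdot))$ (Theorem~\ref{thm:supp}) is what makes the iterated limit in Lemma~\ref{lm:conv} legitimate. Consequently the assembly above is short, and its only delicate points are structural rather than computational: one must know that $v_{\lambda n}$ lies honestly in $\dom(L_0)$ (so that $L_0 v_{\lambda n}$ is a bona fide element of $L^1(H,\mu)$ and the displayed identity makes sense), and one must respect the order of the limits -- first $n\to\infty$, then $\lambda\to0$ -- dictated by Lemma~\ref{lm:conv}, since only the Yosida parameter $\lambda$ can be sent to zero after the mollification has been removed.
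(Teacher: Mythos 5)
Your proposal is correct and follows essentially the same route as the paper: dissipativity of $L_0$ from $L_0=L$ on $\dom(L_0)$ and $m$-dissipativity of $L$, density of the range of $I+L_0$ via the resolvent-type functions $v_{\lambda n}$ of Lemma~\ref{lem:v} combined with the convergence Lemma~\ref{lm:conv} (with the iterated limit in the correct order), and then the Lumer--Phillips theorem. The only cosmetic differences are that you work with $\alpha=1$ and approximate elements of the dense set $\dom(L_0)$ directly rather than an arbitrary $f\in L^1(H,\mu)$, and that you spell out the standard maximality argument identifying $\overline{L_0}$ with $L$, for which the paper simply cites the literature.
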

\begin{proof}
  Since the extension of the transition semigroup $P$ to $L^1(H,\mu)$
  is contractive, it follows by the Lumer-Phillips theorem that $L$ is
  $m$-accretive. As $L$ coincides with $L_0$ on $\dom(L_0)$, this
  implies that $L_0$ is accretive in $L^1(H,\mu)$, hence, in
  particular, closable. We are going to show that the image of
  $\alpha I+L_0$ is dense in $L^1(H,\mu)$ for all $\alpha>0$. Let $f \in
  L^1(H,\mu)$ and $\varepsilon>0$. Since $\dom(L_0)$ is dense in
  $L^1(H,\mu)$, there exists $g \in \dom(L_0)$ such that
  $\norm{f-g}_{L^1(H,\mu)}<\varepsilon/2$. Setting, for any $n \in
  \enne$ and $\lambda \in (0,1)$,
  \[
  v_{\lambda n}(x) := \int_0^\infty e^{-\alpha t} \E g(X^x_{\lambda n}(t))\,dt,
  \]
  if follows by Lemma~\ref{lem:v} that $v_{\lambda n} \in\dom(L_0)$
  and that
  \[
  \alpha v_{\lambda n}(x)+ L_0^{\lambda n}v_{\lambda n}(x) = g(x)
  \]
  for every $x\in V\cap J\cap J^*$, hence also
  \[
  \alpha v_{\lambda n}(x) + L_0v_{\lambda n}(x) - g(x) 
  = L_0v_{\lambda n}(x) - L_0^{\lambda n}v_{\lambda n}(x).
  \]
  Thanks to Lemma~\ref{lm:conv}, there exist $\lambda_0>0$ and $n_0
  \in \enne$ such that
  \[
    \norm[\big]{L_0v_{\lambda_0 n_0}%
      - L_0^{\lambda_0 n_0} v_{\lambda_0 n_0}}_{L^1(H,\mu)}
  < \varepsilon/2,
  \]
  hence, setting $\varphi:=v_{\lambda_0 n_0}$,
  \begin{align*}
  \norm[\big]{\alpha\varphi + L_0\varphi - f}_{L^1(H,\mu)} &\leq
  \norm[\big]{\alpha\varphi + L_0\varphi - g}_{L^1(H,\mu)} 
  + \norm[\big]{f-g}_{L^1(H,\mu)} < \varepsilon.
  \end{align*}
  As $\varepsilon>0$ was arbitrary, it follows that the image of
  $\alpha I + L_0$ is dense in $L^1(H,\mu)$. Since $L_0$ is closable,
  the Lumer-Phillips theorem implies that $-\overline{L_0}$, the
  closure of $-L_0$ in $L^1(H,\mu)$, generates a strongly continuous
  semigroup of contractions in $L^1(H,\mu)$. Recalling that $L$ is an
  extension of $L_0$, it follows again by the Lumer-Phillips theorem
  that $L=\overline{L_0}$ (see, for instance,
  \cite[Theorem~1.12]{Ebe}).
\end{proof}


\ifbozza\newpage\else\fi
\bibliographystyle{amsplain}
\bibliography{ref}

\end{document}